\newtheorem{theorem}{Theorem}[section]
\newtheorem{proposition}[theorem]{Proposition}
\newtheorem{lemma}[theorem]{Lemma}
\newtheorem{corollary}[theorem]{Corollary}
\theoremstyle{definition}
\newtheorem{definition}[theorem]{Definition}
\newtheorem{problem}[theorem]{Problem}
\newtheorem{remark}[theorem]{Remark}
\newcommand\B{\mathbb{B}}
\newcommand\C{\mathbb{C}}
\newcommand\D{\mathbb{D}}
\newcommand\N{\mathbb{N}}
\renewcommand\P{\mathbb{P}}
\newcommand\R{\mathbb{R}}
\newcommand\Z{\mathbb{Z}}
\newcommand{\cO}{\mathcal{O}}
\newcommand{\id}{\mathrm{id}}
\newcommand{\pr}{\mathrm{pr}}
\DeclareMathOperator{\Aut}{Aut}
\DeclareMathOperator{\Op}{Op}
\DeclareMathOperator{\Supp}{Supp}
\newcommand{\CAP}{\mathrm{CAP}}
\newcommand{\CIP}{\mathrm{CIP}}
\newcommand{\BOPA}{\mathrm{BOPA}}
\newcommand{\BOPI}{\mathrm{BOPI}}
\newcommand{\BOPJI}{\mathrm{BOPJI}}
\newcommand{\BOPAI}{\mathrm{BOPAI}}
\newcommand{\BOPAJI}{\mathrm{BOPAJI}}
\newcommand{\PCAP}{\mathrm{PCAP}}
\newcommand{\PCIP}{\mathrm{PCIP}}
\newcommand{\POPA}{\mathrm{POPA}}
\newcommand{\POPI}{\mathrm{POPI}}
\newcommand{\POPJI}{\mathrm{POPJI}}
\newcommand{\POPAI}{\mathrm{POPAI}}
\newcommand{\POPAJI}{\mathrm{POPAJI}}
\newcommand{\HAP}{\mathrm{HAP}}
\newcommand{\Ell}{\mathrm{Ell}}
\newcommand{\POPAIsec}{\POPAI^{\mathrm{sec}}}
\newcommand{\HAPsec}{\HAP^{\mathrm{sec}}}
\begin{document}

\title[Elliptic characterization and unification of Oka maps]{Elliptic characterization and \\ unification of Oka maps}
\author{Yuta Kusakabe}
\address{Department of Mathematics, Graduate School of Science, Osaka University, Toyonaka, Osaka 560-0043, Japan}
\email{y-kusakabe@cr.math.sci.osaka-u.ac.jp}
\subjclass[2020]{Primary 32Q56; Secondary 32E10, 32E30, 32H02}
\keywords{Oka principle, Oka map, Oka manifold, Stein manifold, ellipticity}

\begin{abstract}
We generalize our elliptic characterization of Oka manifolds to Oka maps.
The generalized characterization can be considered as an affirmative answer to the relative version of Gromov's conjecture.
As an application, we unify previously known Oka principles for submersions; namely the Gromov type Oka principle for subelliptic submersions and the Forstneri\v{c} type Oka principle for holomorphic fiber bundles with $\CAP$ fibers.
We also establish the localization principle for Oka maps which gives new examples of Oka maps.
\end{abstract}

\maketitle

%
%

\section{Introduction}

In 1989, Gromov's seminal paper \cite{Gromov1989} on the Oka principle initiated modern Oka theory.
Forstneri\v{c}, L\'{a}russon and others developed it into the theory of Oka manifolds and Oka maps (cf. \cite{Forstneric2013,Forstneric2017,Larusson2004}).
One of the most fundamental problems is to determine which holomorphic submersions enjoy the following Oka property.
Here, for a subset $A$ of a topological space $X$ we denote by $\Op A=\Op_{X}A$ a non-specified open neighborhood of $A$ in $X$.

\begin{definition}
\label{definition:oka_map}
(1) A holomorphic submersion $\pi:Y\to S$ between (reduced) complex spaces enjoys \emph{POPAI (the Parametric Oka Property with Approximation and Interpolation)} if for any Stein space $X$, any closed complex subvariety $X'\subset X$, any compact $\cO(X)$-convex subset $K\subset X$, any compact Hausdorff spaces $P_{0}\subset P$, any (continuous) family of holomorphic maps $F:P\times X\to S$ and any family of continuous maps $f_{0}:P\times X\to Y$ such that
\begin{enumerate}[(a)]
	\item $\pi\circ f_{0}=F$, and
	\item $f_{0}|_{P_{0}\times X}$, $f_{0}|_{P\times X'}$ and $f_{0}|_{P\times\Op K}$ are families of holomorphic maps,
\end{enumerate}
there exists a homotopy $f_{t}:P\times X\to Y,\ t\in[0,1]$ such that the following hold for each $t\in[0,1]$;
\begin{enumerate}[(i)]
	\item $\pi\circ f_{t}=F$,
	\item $f_{t}=f_{0}$ on $(P_{0}\times X)\cup(P\times X')$,
	\item $f_{t}|_{P\times\Op K}$ is a family of holomorphic maps which approximates $f_{0}$ uniformly on $P\times K$, and
	\item $f_{1}:P\times X\to Y$ is a family of holomorphic maps.
\end{enumerate}
(2) A holomorphic submersion is an \emph{Oka map} if it is a (topological) fibration\footnote{By a topological fibration, we mean a Serre fibration or a Hurewicz fibration.
Since complex spaces admit triangulations, these notions are equivalent for continuous maps between complex spaces (cf. \cite{Arnold1972}).} and enjoys $\POPAI$.
\end{definition}

There are mainly two types of Oka principles.
One is the Gromov type Oka principle \cite{Gromov1989,Forstneric2002a,Forstneric2010} which states that every subelliptic submersion enjoys POPAI.
The other is the Forstneri\v{c} type Oka principle \cite{Forstneric2009,Forstneric2010} which shows $\POPAI$\footnote{Here, the parameter spaces $P_0\subset P$ in Definition \ref{definition:oka_map} are restricted to be Euclidean (cf. \cite[\S 7.4]{Forstneric2017}). This restriction will be removed later (see Corollary \ref{corollary:parameter}).} of a holomorphic fiber bundle whose fiber enjoys \emph{CAP} (the \emph{Convex Approximation Property}, see Definition \ref{definition:CAP}).
These Oka principles are known to be mutually independent.
Namely, there exists a subelliptic submersion which is not locally trivial at any base point (e.g. a complete family of complex tori \cite[Theorem 16]{Larusson2012}), and there exists a non-subelliptic holomorphic fiber bundle with a $\CAP$ fiber (\cite[Corollary 3.2]{Kusakabe2020}).
Moreover, there is a holomorphic submersion which enjoys POPAI but is neither subelliptic nor locally trivial at any base point (Proposition \ref{proposition:example}).
Thus it is natural to ask whether there exists a characterization of POPAI which implies these Oka principles.

In the present paper, we characterize $\POPAI$ by the following \emph{convex ellipticity} which is a variant of Gromov's Condition $\Ell_{1}$ \cite[p.\,71]{Gromov1986}.

\begin{definition}
\label{definition:cell}
A holomorphic submersion $\pi:Y\to S$ (between complex spaces) is \emph{convexly elliptic} if there exists an open cover $\{U_{\alpha}\}_{\alpha}$ of $S$ such that for any $n\in\N$, any compact convex set $K\subset\C^{n}$ and any holomorphic map $f:\Op K\to Y$ with $f(K)\subset \pi^{-1}(U_{\alpha})$ for some $\alpha$ there exists a holomorphic map $s:\Op K\times\C^{N}\to Y$ such that
\begin{enumerate}
	\item $s(z,0)=f(z)$, $\pi\circ s(z,w)=\pi\circ f(z)$ for all $(z,w)\in\Op K\times\C^{N}$, and
	\item $s(z,\cdot):\C^{N}\to\pi^{-1}(\pi\circ f(z))$ is a submersion at $0$ for each $z\in K$.
\end{enumerate}
\end{definition}

Without using the above Oka principles, we can easily prove that every subelliptic submersion is convexly elliptic (cf. \cite[Proof of Proposition 8.8.11\,(b)]{Forstneric2017}) and that every holomorphic fiber bundle with a $\CAP$ fiber is convexly elliptic (Remark \ref{remark:CAP} and Proposition \ref{proposition:CAP=>CEll}).
Thus the following main theorem unifies them.

\begin{theorem}
\label{theorem:characterization}
A holomorphic submersion enjoys $\POPAI$ if and only if it is convexly elliptic.
In particular, a holomorphic submersion is an Oka map if and only if it is a convexly elliptic fibration.
\end{theorem}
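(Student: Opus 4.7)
The plan is to prove the two implications separately; the ``in particular'' clause then follows from Definition~\ref{definition:oka_map}\,(2), since an Oka map is by definition a fibration enjoying $\POPAI$.

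For $\POPAI\Rightarrow$ convex ellipticity, I take the trivial open cover $\{S\}$. Given a compact convex $K\subset\C^{n}$ and a holomorphic map $f:\Op K\to Y$, I would first construct a continuous preliminary spray $s_{0}:\Op K\times\C^{N}\to Y$ satisfying $s_{0}(z,0)=f(z)$, $\pi\circ s_{0}(z,w)=\pi\circ f(z)$, with $s_{0}(z,\cdot):\C^{N}\to\pi^{-1}(\pi\circ f(z))$ a submersion at $0$ for $z\in K$, and holomorphic in some neighborhood of $K\times\{0\}$. After shrinking $\Op K$ to be Stein, the pulled-back vertical tangent bundle $f^{*}\ker d\pi$ is trivial, and its global holomorphic frame combined with the local product structure of $\pi$ near $f(\Op K)$ produces a holomorphic local spray on $\Op K\times U$ for some neighborhood $U$ of $0\in\C^{N}$; this is then extended continuously to all of $\Op K\times\C^{N}$ by bumping inside local product charts of $\pi$ over a neighborhood of the compact set $\pi(f(K))\subset S$, so as to preserve the fiber constraint. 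Applying $\POPAI$ with $X=\Op K\times\C^{N}$ (Stein), $X'=\Op K\times\{0\}$, $K_{X}=K\times\{0\}$, trivial parameter pair $\emptyset=P_{0}\subset P=\{\ast\}$, base family $F(z,w)=\pi(f(z))$, and initial $s_{0}$, one obtains a holomorphic $f_{1}$ with $\pi\circ f_{1}=F$ and $f_{1}|_{X'}=f$. Close uniform approximation on $K_{X}$ together with Cauchy estimates on derivatives guarantees that $f_{1}(z,\cdot)$ remains a submersion at $0$ for $z\in K$, so $s:=f_{1}$ is the desired dominating spray.

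For convex ellipticity $\Rightarrow\POPAI$, which is the main content of the theorem, I would follow the strategy of Kusakabe's earlier elliptic characterization of Oka manifolds, adapted to the relative setting over $S$. The plan is to (i) derive a parametric convex approximation property $\PCAP$ for $\pi$ from the local dominating sprays, by using the spray variable to linearize corrections of a continuous family of holomorphic approximants, and then (ii) bootstrap $\PCAP$ to full $\POPAI$ via the Forstneri\v{c}-type Oka-principle machinery for submersions, i.e.\ Cartan's attaching lemma together with a Stein exhaustion and inductive bumping that carries approximation on $K$, interpolation on $(P_{0}\times X)\cup(P\times X')$, and parametric continuity simultaneously through each step.

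The main obstacle lies in step (i): the sprays guaranteed by convex ellipticity exist only over a single preimage $\pi^{-1}(U_{\alpha})$, whereas a general input family $f_{0}:P\times X\to Y$ for $\POPAI$ has image spread across many elements of the cover $\{U_{\alpha}\}$. The required reduction involves subdividing both the parameter space $P$ and the source $X$ so that each small piece maps into a single $\pi^{-1}(U_{\alpha})$, then patching the local vertical perturbations while respecting the interpolation constraints on $(P_{0}\times X)\cup(P\times X')$, the uniform approximation on $K$, and the continuity in $P$. Achieving this patching in a manner compatible with the full data of $\POPAI$ is the technical heart of the proof.
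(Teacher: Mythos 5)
Your treatment of $\POPAI\Rightarrow$ convex ellipticity is essentially the paper's: extend a dominating local spray (Lemma~\ref{lemma:local_spray}) continuously in the fiber variable and globalize it by applying $\POPAI$ on the Stein manifold $\Op K\times\C^{N}$ with interpolation along $\{w=0\}$. (One small fix: approximate on a set of the form $K\times\overline{r\B^{N}}$ rather than on $K\times\{0\}$, since uniform approximation on $K\times\{0\}$ alone does not control the $w$-derivative at $0$.) The converse is where the content lies, and there your plan has a genuine gap. Step (ii) appeals to a ``$\PCAP\Rightarrow\POPAI$'' bootstrap for general submersions, but no such machinery exists prior to this theorem: the Forstneri\v{c}-type argument was available only for constant submersions and for fiber bundles with $\CAP$ fibers, and the equivalence of the Oka properties of a submersion (Corollary~\ref{corollary:equivalence}) is itself a consequence of the result being proved. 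The relative $\HAPsec$/Heftungslemma/stepwise-extension machinery has to be rebuilt, and rebuilding it is exactly where the difficulty you flag in step (i) reappears.

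More importantly, that difficulty is not resolved by subdividing $P$ and $X$ so that each piece maps into a single $\pi^{-1}(U_{\alpha})$ and then patching. The patching step --- the Heftungslemma over the attaching sets $A\cap B$ of the Cartan pairs, linearized via the Cartan--Oka--Weil theorem --- itself consumes dominating \emph{global} $\pi$-sprays over $f_{t}(q,\cdot)|_{\Op(A\cap B)}$, where the image $f_{t}(q,A\cap B)$ has no reason to lie in one $\pi^{-1}(U_{\alpha})$, and where the subdivision cannot be fixed in advance because the inductive bumping changes the family drastically on each bump. The paper's resolution, absent from your proposal, is a two-stage structure: first prove the Oka principle for \emph{sections} of stratified convexly elliptic submersions (Theorem~\ref{theorem:oka_principle_for_sections}), where localization over the base of $\pi$ coincides with localization over the Stein source and is absorbed by the $\HAP\Rightarrow\POP$ theorem; then, for an arbitrary family of holomorphic maps $f:P\times\Op K\to Y$, take a Stein neighborhood $V$ of the graph of $f(p,\cdot)$ via Siu's theorem, note that global $\pi$-sprays over $\pr_{Y}|_{V}$ are precisely sections of a pullback submersion satisfying the hypotheses of Theorem~\ref{theorem:oka_principle_for_sections} (Remark~\ref{remark:pullback}), and thereby globalize a local dominating spray into the global one required as input to Theorem~\ref{theorem:approximation} (this is Lemma~\ref{lemma:parametric_spray}). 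Without this reduction of ``sprays over lifts'' to ``sections over a neighborhood of the graph,'' the induction you describe cannot get started.
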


Since a complex manifold $Y$ is an \emph{Oka manifold} if and only if the constant submersion $Y\to *$ is an Oka map, Theorem \ref{theorem:characterization} generalizes our previous characterization of Oka manifolds \cite[Theorem 2.2]{Kusakabe2019}.
Recall that the previous characterization gave an affirmative answer to Gromov's conjecture \cite[\S 1.4.E$''$]{Gromov1989} which essentially states that the Oka property and Gromov's Condition $\Ell_{1}$ are equivalent for manifolds (see \cite[\S4.2]{Kusakabe2019}).
Therefore Theorem \ref{theorem:characterization} can be considered as an affirmative answer to the relative version of Gromov's conjecture.

Theorem \ref{theorem:characterization} has various applications.
In the same way as in \cite{Kusakabe2019}, it implies the localization principle for Oka maps (Corollary \ref{corollary:localization}) which gives new examples of Oka maps.
We also obtain the equivalences between fourteen Oka properties of a submersion (Corollary \ref{corollary:equivalence}), the invariance of $\POPAI$ (Corollary \ref{corollary:fibration}) and the refinements of previously known Oka principles (Corollary \ref{corollary:parameter} and Corollary \ref{corollary:dimensionwise}).

This paper is organized as follows.
In Section \ref{section:sprays}, we recall the notion of dominating sprays which plays a fundamental role in the proof of Theorem \ref{theorem:characterization}.
The basic properties of dominating sprays are also reviewed.
Before proving Theorem \ref{theorem:characterization}, we first establish the Oka principle for sections of stratified convexly elliptic submersions in Section \ref{section:sections}.
By using this Oka principle, we prove the stratified version of Theorem \ref{theorem:characterization} in Section \ref{section:lifts}.
In Section \ref{section:applications}, we give applications and remarks.

%
%

\section{Dominating sprays}
\label{section:sprays}

In this section, we recall the notion of dominating sprays and a few facts about it.
We denote by $\pr_{X_{\lambda_{0}}}:\prod_{\lambda\in\Lambda}X_{\lambda}\to X_{\lambda_{0}}$ the projection map to $X_{\lambda_{0}}\ (\lambda_{0}\in\Lambda)$.

\begin{definition}
\label{definition:spray}
Let $X$ be a complex space, $\pi:Y\to S$ be a holomorphic submersion, $A\subset X$ be a subset and $P$ be a topological space.
\begin{enumerate}[leftmargin=*]
\item A \emph{(local) $\pi$-spray} over a family of holomorphic maps $f:P\times\Op A\to Y$ is a family of holomorphic maps $s:P\times\Op A\times W\to Y$ where $W\subset\C^N$ is an open neighborhood of $0$ such that $s(p,x,0)=f(p,x)$, $\pi\circ s(p,x,w)=\pi\circ f(p,x)$ for all $(p,x,w)\in P\times\Op A\times W$.
Particularly in the case of $W=\C^{N}$, $s$ is also called a \emph{global $\pi$-spray}.
\item A $\pi$-spray $s:P\times\Op A\times W\to Y$ is \emph{dominating} if $s(p,x,\cdot):\C^{N}\to\pi^{-1}(\pi\circ s(p,x,0))$ is a submersion at $0$ for each $(p,x)\in P\times A$.
\end{enumerate}
\end{definition}

Note that the holomorphic map $s:\Op K\times\C^{N}\to Y$ in Definition \ref{definition:cell} is nothing but a dominating global $\pi$-spray over $f:\Op K\to Y$.

The following fact ensures the existence of dominating local sprays.
Here, a compact subset of a complex space is called a \emph{Stein compact} if it admits a basis of open Stein neighborhoods.

\begin{lemma}[{cf. \cite[Lemma 5.10.4 and p.\,254]{Forstneric2017}}]
\label{lemma:local_spray}
Let $\pi:Y\to S$ be a holomorphic submersion, $K$ be a Stein compact in a complex space and $P$ be a compact Hausdorff space.
Then for any family of holomorphic maps $f:P\times\Op K\to Y$ there exist an open neighborhood $W\subset\C^{N}$ of $0$ and a dominating $\pi$-spray $P\times\Op K\times W\to Y$ over $f$.
\end{lemma}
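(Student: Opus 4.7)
The plan is to build the dominating spray from three ingredients: local trivializations of the submersion $\pi$ giving tautological local sprays, Cartan's Theorem A on a Stein neighborhood of $K$, and a continuous partition of unity on the compact parameter space $P$.

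Since $\pi:Y\to S$ is a holomorphic submersion, every $y\in Y$ has a neighborhood $V_y\cong U_y\times\Delta_y$ on which $\pi$ becomes the projection onto $U_y\subset S$, with $\Delta_y\subset\C^{N_y}$ open and $N_y$ the local fiber dimension. On such a chart the map $(u,v,w)\mapsto(u,v+w)$ is a tautological dominating local $\pi$-spray over the identity, defined for $w$ in a sufficiently small neighborhood of $0$. Since $f(P\times K)\subset Y$ is compact I cover it by finitely many such charts $V_1,\ldots,V_m$ (taking $N$ to be a common upper bound for the fiber dimensions); by joint continuity of $f$ I then cover $P\times K$ by finitely many open products $Q_\alpha\times B_\alpha\subset P\times X$ with $f(\overline{Q_\alpha\times B_\alpha})\subset V_{i(\alpha)}$ for some index $i(\alpha)$. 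I also shrink $\Op K$ to a Stein open neighborhood $\Omega$ of $K$, which is possible because $K$ is a Stein compact.

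Next I construct spanning vertical sections of $E:=f^{*}T_{\pi}Y$ along $f$. On each $Q_\alpha\times B_\alpha$ the pullback of the coordinate basis $\partial/\partial v$ of the vertical tangent bundle on $V_{i(\alpha)}$ yields a local frame $\xi^{(\alpha)}_{1},\ldots,\xi^{(\alpha)}_{N}$ of $E$ that is continuous in $p$ and holomorphic in $x$. Combining Cartan's Theorem A on the Stein space $\Omega$ (applied for each fixed $p$) with a continuous partition of unity $\{\chi_\alpha\}$ on $P$ subordinate to $\{Q_\alpha\}$, I assemble finitely many sections $V_{1},\ldots,V_{\tilde N}\in\Gamma(P\times\Omega,E)$ that are continuous in $p$, holomorphic in $x\in\Omega$, and span $E$ at every point of $P\times K$. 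Finally, I use the local coordinate exponentials to build the spray: for $(p,x)\in Q_\alpha\times B_\alpha$ and sufficiently small $w\in\C^{\tilde N}$ put
\[s(p,x,w):=\bigl(u^{(i(\alpha))}(f(p,x)),\ v^{(i(\alpha))}(f(p,x))+\textstyle\sum_{j=1}^{\tilde N}w_{j}\,V_{j}^{(i(\alpha))}(p,x)\bigr),\]
where $V_{j}^{(i)}$ denotes the coordinate expression of $V_{j}$ in the vertical basis of $V_{i}$. A standard Cartan-type patching on the Stein space $\Omega$ (or, equivalently, a finite composition of such local sprays with auxiliary parameters) produces a single well-defined spray $s:P\times\Op K\times W\to Y$ on a sufficiently small neighborhood $W$ of $0\in\C^{\tilde N}$; dominance at $w=0$ is immediate from the spanning property of the $V_{j}$.

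The main obstacle is the parametric construction in the third paragraph: the simultaneous production of holomorphic-in-$x$ and continuous-in-$p$ global sections of $E$ that span on $P\times K$. This is resolved by combining classical Cartan theory on the Stein space $\Omega$ with the partition of unity on $P$, exploiting that the partition functions depend only on $p$ and therefore preserve holomorphicity in $x$. The final patching into one global spray is again standard, relying on the same Stein structure on $\Omega$.
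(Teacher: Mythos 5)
The paper does not reprove this lemma; it quotes it from Forstneri\v{c}'s book, where the construction goes as follows: take a Stein neighborhood $V$ of the graph of $f$ in $\Op K\times Y$ (Siu's theorem), generate the pullback of the vertical tangent bundle $\ker d\pi$ over $V$ by finitely many global holomorphic sections (Cartan's Theorem A), regard these as fiber-tangent holomorphic vector fields depending on the base point, and define the spray as the composition of their time-$w_j$ flows applied to $f(p,x)$; the parameter $p$ is handled by compactness of $P$ and cut-off functions in $p$ alone. Your first two paragraphs are consistent with this, but your final step has a genuine gap. The displayed formula $s(p,x,w)=\bigl(u^{(i(\alpha))}(f(p,x)),\,v^{(i(\alpha))}(f(p,x))+\sum_j w_j V_j^{(i(\alpha))}(p,x)\bigr)$ is defined chart by chart, and on an overlap $(Q_\alpha\times B_\alpha)\cap(Q_\beta\times B_\beta)$ with $i(\alpha)\neq i(\beta)$ the two expressions disagree for $w\neq 0$ (the transition maps between submersion charts are nonlinear in the fiber variable), so no spray has actually been defined. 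The ``standard Cartan-type patching'' you invoke does not exist at this level of generality: gluing two sprays that agree only at $w=0$ is precisely the content of the Heftungslemma, which requires a Cartan pair, an approximation step, and shrinking of $W$, and is certainly not available for an arbitrary finite cover of $P\times K$. The clean repair is to abandon the additive chart formula entirely and use flows: since the $V_j$ are globally defined sections tangent to the fibers of $\pi$, the composition $s(p,x,w)=\phi^1_{w_1}\circ\cdots\circ\phi^{\tilde N}_{w_{\tilde N}}(f(p,x))$ of their local flows is defined for $w$ in a neighborhood of $0$ (uniformly, by compactness of $P\times K$), is coordinate-free hence globally well defined, satisfies $\pi\circ s=\pi\circ f$, and is dominating because $\partial s/\partial w_j|_{w=0}=V_j$.

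A second, smaller issue is the partition-of-unity step. As written, you combine sections produced by Cartan's Theorem A ``for each fixed $p$,'' but $f(p,\cdot)^{*}T_\pi Y$ and $f(p',\cdot)^{*}T_\pi Y$ are different bundles, and local frames coming from different charts $V_{i(\alpha)}$ live in different trivializations, so the sum $\sum_\alpha\chi_\alpha(p)\,\xi^{(\alpha)}_j$ is not meaningful without first identifying everything inside a single bundle. The standard way to do this is to work with sections of $\pr_Y^{*}(\ker d\pi)$ over a Stein neighborhood of the graph in $\Op K\times Y$ (this is where Siu's theorem is needed, and it is the step your proof omits), pull them back along $(x,y)\mapsto(x,f(p,x))$, and keep the finitely many sections $\chi_\alpha(p)\,\sigma^{(\alpha)}_j$ as \emph{separate} generators rather than summing over $\alpha$; domination then holds because for each $p$ at least one $\chi_\alpha(p)$ is nonzero and the corresponding subfamily already spans.
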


The next fact states that the dominability of a spray is equivalent to the existence of a right inverse of the associated fiber preserving map.
This allows us to transfer nonlinear problems to linear problems.
We denote by $\Gamma_{f}$ the graph of $f:P\times\Op K\to Y$ in the pullback $(\pi\circ f)^{*}Y\subset P\times\Op K\times Y$.

\begin{lemma}[{cf. \cite[p.\,254]{Forstneric2017}, \cite[Lemma 2.5]{Kusakabe2019}}]
\label{lemma:right_inverse}
Let $\pi:Y\to S$ be a holomorphic submersion, $K$ be a Stein compact in a complex space, $P$ be a compact Hausdorff space and $f:P\times\Op K\to Y$ be a family of holomorphic maps.
Then a $\pi$-spray $s:P\times\Op K\times W\to Y$ over $f$ is dominating if and only if there exists a fiber preserving (with respect to $\pr_{P\times\Op K}$) continuous map $\iota:\Op_{(\pi\circ f)^{*}Y}\Gamma_{f}\to P\times\Op K\times W$ which satisfies $(\pr_{P\times\Op K},s)\circ\iota=\id,\, \pr_{W}\circ\iota|_{\Gamma_{f}}\equiv 0$ and defines a holomorphic map $\iota(p,\cdot,\cdot):\Op\Gamma_{f(p,\cdot)}\to\Op K\times W$ for each fixed $p\in P$.
\end{lemma}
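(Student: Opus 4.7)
The proof plan splits into a straightforward direction and a more involved direction requiring the holomorphic implicit function theorem plus a patching argument.

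For the easy direction (existence of $\iota$ implies dominating), fix $(p_0, x_0) \in P \times K$ and restrict the holomorphic map $\iota(p_0, \cdot, \cdot)$ to the slice $\{x_0\} \times \pi^{-1}(\pi \circ f(p_0, x_0))$. Writing $\iota(p, x, y) = (p, x, \omega(p, x, y))$, this produces a holomorphic right inverse $\omega(p_0, x_0, \cdot)$ of $s(p_0, x_0, \cdot)$ on a neighborhood of $f(p_0, x_0)$ in the fiber, sending $f(p_0, x_0)$ to $0$. Differentiating at $y = f(p_0, x_0)$ shows that $d_{f(p_0, x_0)} \omega(p_0, x_0, \cdot)$ is a right inverse of the linear map $d_0 s(p_0, x_0, \cdot) : \C^N \to T_{f(p_0, x_0)} \pi^{-1}(\pi \circ f(p_0, x_0))$, so the latter is surjective, i.e., $s$ is dominating.

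For the reverse direction, the first step is to construct local right inverses. Fix $(p_0, x_0) \in P \times K$. By the domination hypothesis, $d_0 s(p_0, x_0, \cdot)$ surjects onto the fiber tangent space, so I would choose a complex linear subspace $V \subset \C^N$ on which this differential restricts to an isomorphism onto the fiber tangent space. By continuity, $s(p, x, \cdot)|_V$ remains a local biholomorphism onto a neighborhood of $f(p, x)$ in $\pi^{-1}(\pi \circ f(p, x))$ for all $(p, x)$ in some neighborhood of $(p_0, x_0)$ in $P \times \Op K$. The parametric holomorphic implicit function theorem, applied with $p$ as a continuous parameter, then produces a local right inverse $\iota^{(p_0, x_0)}$ on a neighborhood of $(p_0, x_0, f(p_0, x_0))$ in $(\pi \circ f)^* Y$, fiber preserving, continuous, holomorphic in $(x, y)$ for each fixed $p$, and vanishing in the $W$-component along $\Gamma_f$.

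The final step is to patch these local right inverses into a global $\iota$, and this is where I expect the main obstacle. By compactness of $P \times K$, finitely many local right inverses cover $\Gamma_f$, but naive convex-combination averaging in $\C^N$ fails to preserve the right-inverse property because $s(p, x, \cdot)$ is nonlinear in $w$. My plan is to apply a Newton-type correction iteration: form an initial approximate right inverse $\iota^{(0)}$ by gluing the local inverses via a continuous partition of unity on $P$ (available since $P$ is compact Hausdorff, hence paracompact), note that the error $(\pr_{P \times \Op K}, s) \circ \iota^{(0)} - \id$ vanishes along $\Gamma_f$ and hence is small in a suitable neighborhood of $\Gamma_f$, then iteratively absorb this error by composing with any one of the local right inverses. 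The contraction principle, applied in a sufficiently small neighborhood of $\Gamma_f$, yields convergence and produces the required $\iota$ with the stated continuity, fiberwise holomorphicity, and vanishing on $\Gamma_f$.
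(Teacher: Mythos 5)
The easy direction of your argument is correct. The problem is in the patching step of the converse. Your local right inverses are local in all three variables $p$, $x$, $y$ (they are indexed by points $(p_0,x_0)\in P\times K$ and built from a \emph{fixed} linear complement $V$ of $\ker d_0s(p_0,x_0,\cdot)$, which is only complementary near $(p_0,x_0)$), yet you propose to glue them by a partition of unity on $P$ alone. For a fixed $p$, the pieces coming from different base points $x_0$ still overlap in the $x$- and $y$-directions, and a partition of unity on $P$ cannot separate them; a partition of unity in $x$ or $y$ would destroy the holomorphicity of $\iota(p,\cdot,\cdot)$ that the lemma demands. The Newton/contraction correction does not repair this: if the initial guess $\iota^{(0)}$ is merely continuous in $(x,y)$, then so is $g=(\pr_{P\times\Op K},s)\circ\iota^{(0)}$, and the limit of the iteration (equivalently $\iota^{(0)}\circ g^{-1}$) is an exact but still non-holomorphic right inverse. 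Since the fibers of $(\pr_{P\times\Op K},s)$ have positive dimension in general, selecting a holomorphic section among continuous ones is precisely the problem you started with, so the iteration cannot restore holomorphicity once it is lost. Note also that a single linear $V$ complementary to $\ker d_0s(p,x,\cdot)$ for \emph{all} $(p,x)\in P\times K$ need not exist, so one cannot simply take one chart.

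The cited proofs (Forstneri\v{c}, p.~254; \cite[Lemma 2.5]{Kusakabe2019}) avoid pointwise localization in $x$ altogether, and this is the missing ingredient: after shrinking $\Op K$ to a Stein neighborhood $U\supset K$, domination on $P\times K$ and semicontinuity of rank make $E_p=\ker\partial_ws(p,\cdot,0)\subset U\times\C^N$ a holomorphic vector subbundle of constant rank, which by Cartan's Theorem B admits a holomorphic complementary subbundle $V_p\subset U\times\C^N$, chosen continuously in $p$. Restricting $(\pr_{P\times\Op K},s)$ to a neighborhood of the zero section of $V_p$ gives a fiberwise injective immersion, hence (by the fiberwise inverse function theorem and the fact that it is the identity over $\Gamma_f$) a biholomorphism onto a neighborhood of $\Gamma_{f(p,\cdot)}$; its inverse is the desired $\iota$, globally in $(x,y)$ at one stroke. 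With this global splitting in place of your partition-of-unity gluing, the rest of your plan (including the vanishing of $\pr_W\circ\iota$ on $\Gamma_f$) goes through, and the Newton step becomes unnecessary.
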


%
%

\section{Oka principle for sections}
\label{section:sections}

Before proving the Oka principle for lifts (Theorem \ref{theorem:characterization}), we first establish the Oka principle for sections.
Let us introduce the following terminology (compare with Definition \ref{definition:oka_map}).

\begin{definition}
\label{definition:POP}
A holomorphic submersion $h:Z\to X$ onto a Stein space enjoys $\POPAIsec$ ($\POPAI$ for sections) if for any closed complex subvariety $X'\subset X$, any compact $\cO(X)$-convex subset $K\subset X$, any compact Hausdorff spaces $P_{0}\subset P$ and any family of continuous maps $f_{0}:P\times X\to Z$ such that
\begin{enumerate}
	\item $h\circ f_{0}=\pr_{X}$, and
	\item $f_{0}|_{P_{0}\times X}$, $f_{0}|_{P\times X'}$ and $f_{0}|_{P\times\Op K}$ are families of holomorphic maps,
\end{enumerate}
there exists a homotopy $f_{t}:P\times X\to Z,\ t\in[0,1]$ such that the following hold for each $t\in[0,1]$;
\begin{enumerate}[(a)]
	\item $h\circ f_{t}=\pr_{X}$,
	\item $f_{t}=f_{0}$ on $(P_{0}\times X)\cup(P\times X')$,
	\item $f_{t}|_{P\times\Op K}$ is a family of holomorphic maps which approximates $f_{0}$ uniformly on $P\times K$, and
	\item $f_{1}:P\times X\to Z$ is a family of holomorphic maps.
\end{enumerate}
\end{definition}

Our goal in this section is to prove the following Oka principle.
Here, $\B^{N}$ is the unit ball in $\C^{N}$.

\begin{theorem}
\label{theorem:oka_principle_for_sections}
Let $h:Z\to X$ be a holomorphic submersion onto a Stein space $X$.
Assume that there exists a stratification of $X$ by closed complex subvarieties $X=X_{0}\supset X_{1}\supset\cdots\supset X_{m}=\emptyset$ such that the following hold for each $k=0,\ldots,m-1$;
\begin{enumerate}
\item the stratum $\Sigma_{k}=X_{k}\setminus X_{k+1}$ is smooth, and
\item there exists an open cover $\{U_{\alpha}\}_{\alpha}$ of $\Sigma_{k}$ such that for any $N\in\N$, any compact set $K\subset U_{\alpha}\times\B^{N}$ which is convex in some holomorphic coordinate system and any holomorphic section $f:\Op_{U_{\alpha}\times\B^{N}}K\to h^{-1}(U_{\alpha})\times\B^{N}$ of $h\times\id_{\B^{N}}:h^{-1}(U_{\alpha})\times\B^{N}\to U_{\alpha}\times\B^{N}$ there exists a dominating global $(h\times\id_{\B^{N}})$-spray $\Op_{U_{\alpha}\times\B^{N}}K\times\C^{L}\to h^{-1}(U_{\alpha})\times\B^{N}$ over $f$.
\end{enumerate}
Then $h$ enjoys $\POPAIsec$.
\end{theorem}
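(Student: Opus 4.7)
The plan is to prove $\POPAIsec$ by descending induction on the stratification $X=X_{0}\supset X_{1}\supset\cdots\supset X_{m}=\emptyset$, upgrading holomorphicity of the family from a neighborhood of $X_{k+1}$ to a neighborhood of $X_{k}$ along the smooth stratum $\Sigma_{k}$ by the standard Cartan pair / splitting-lemma technique, with the nonlinear-to-linear reduction furnished by the dominating sprays of hypothesis (2) via Lemma \ref{lemma:right_inverse}.

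First I would reduce the continuous parameter pair $P_{0}\subset P$ to a compact polyhedral pair embedded in some $[0,1]^{n}\subset\B^{N}$, by approximating $P$ by the nerve of a fine open cover and using that uniform limits of families of holomorphic sections are holomorphic. Absorbing the real parameter $p\in P$ into the holomorphic parameter $w\in\B^{N}$ of hypothesis (2) is precisely the reason the latter is stated for $h\times\id_{\B^{N}}$ rather than for $h$ alone: one extends a polyhedral family to a continuous family on an open neighborhood of $P$ in $\B^{N}$, depending holomorphically on the $X$-variable, and handles the polyhedron cell-by-cell, each cell being a compact convex subset of some $\R^{n}\subset\C^{n}$.

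For the inductive step, assume the conclusion holds whenever the initial family is already holomorphic on $P\times\Op X_{k+1}$, and suppose now that $f_{0}$ is holomorphic only on $P\times\Op X_{k+2}$ together with the prescribed sets $P\times\Op K$, $P\times X'$ and $P_{0}\times X$. Fix a strictly plurisubharmonic exhaustion of $X$ and construct a sequence of Cartan pairs $(A_{j},B_{j})$ along $\Sigma_{k}$ such that $\bigcup_{j}(A_{j}\cup B_{j})\supset X_{k}\setminus X_{k+2}$, each $B_{j}$ is $\cO(X)$-convex, small, and (after suitable thickening) contained in some chart $U_{\alpha}$ and biholomorphic to a compact convex set in coordinates; one also arranges $A_{j}\supset K\cup X_{k+1}$ at each step. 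The induction reduces to the following local modification: given a Cartan pair $(A,B)$ with $B$ as above and a family of holomorphic sections $f$ on $P\times\Op(A\cup B)$ which is holomorphic on $P\times\Op A$, produce a nearby family holomorphic on all of $P\times\Op(A\cup B)$, agreeing with $f$ on $(P_{0}\times X)\cup(P\times X')$.

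The local step is the nonlinear Cartan--Oka--Weil splitting. Apply Lemma \ref{lemma:local_spray} to $f|_{P\times\Op B}$ to obtain a dominating local $h$-spray, and then invoke hypothesis (2), applied with $P$ embedded in $\B^{N}$, to upgrade this to a dominating global $(h\times\id_{\B^{N}})$-spray on $\Op B\times\C^{L}$. By Lemma \ref{lemma:right_inverse} the associated fiber preserving right inverse $\iota$ transforms the difference between the $A$- and $B$-descriptions of $f$ on $P\times\Op(A\cap B)$ into a continuous family of $\C^{L}$-valued holomorphic maps on $\Op(A\cap B)$ which vanishes on $P_{0}$ and on $X'$; a parametric Cartan splitting (with interpolation on $X'$ and constancy on $P_{0}$) writes it as $c_{A}-c_{B}$, and setting $\tilde f$ to be $f$ on $\Op A$ suitably corrected by $c_{A}$ and $s(\cdot,c_{B})$ on $\Op B$ yields the desired family. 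Iterating along the exhaustion with geometrically decaying errors gives a convergent sequence producing the required homotopy $f_{t}$. The main obstacle is verifying that the local step can be carried out parametrically while simultaneously respecting the interpolation on $X'$ and the constancy on $P_{0}$; the former is handled, as in previous work, by adjoining $h|_{h^{-1}(X')}$ as an extra layer of the stratification so that $X'$ is compatible with the Cartan splitting, and the latter follows automatically because the spray from hypothesis (2) is obtained fiberwise over $p\in P$ and $\iota$ is continuous in $p$.
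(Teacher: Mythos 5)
Your overall architecture---descending induction on the strata, reduction to a local gluing over Cartan pairs, and the nonlinear-to-linear passage via dominating sprays and Lemma \ref{lemma:right_inverse} followed by a Cartan splitting and the Heftungslemma---matches the paper's route (which packages the stratification step as Theorem \ref{theorem:HAP=>POP} and the local step as Theorem \ref{theorem:approximation}). However, there is a genuine gap in your treatment of the parameter space, and it sits exactly at the point that makes this theorem stronger than the previously known Oka principles. The statement allows \emph{arbitrary} compact Hausdorff pairs $P_{0}\subset P$ (removing the Euclidean restriction is the content of Corollary \ref{corollary:parameter}). Your opening reduction---replacing $P$ by the nerve of a fine open cover and embedding the resulting polyhedron in $[0,1]^{n}\subset\B^{N}$---fails at this generality: a compact Hausdorff space need not embed in any Euclidean space, the family $f_{0}:P\times X\to Z$ does not factor through the nerve even up to homotopy (and the target is not a vector space, so one cannot form the convex combinations needed to define a family on the nerve), and the constraints of holomorphicity on $P_{0}\times X$ and interpolation on $P\times X'$ would not survive such an approximation. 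Passing from polyhedral to general parameter spaces is possible a posteriori via L\'arusson's ANR argument \cite{Larusson2015}, but that is a separate theorem, not a consequence of ``uniform limits of holomorphic sections are holomorphic.'' The paper avoids any such reduction: in the proof of Theorem \ref{theorem:approximation} the parameter space is kept as is, $P$ is covered by finitely many compacts $P^{1},\ldots,P^{l}$, the interval $[0,1]$ is subdivided, dominating global sprays over the \emph{family} are produced on each piece $Q^{j,k}$ by combining the pointwise hypothesis with Lemma \ref{lemma:right_inverse} and compactness, and the deformation is carried out by the stepwise extension method.

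Relatedly, you have misidentified the role of the factor $\B^{N}$ in hypothesis (2): it is not there to absorb the topological parameter $p\in P$. As Remark \ref{remark:HAP} explains, it is there because the Heftungslemma forces one to approximate dominating \emph{local sprays} over sections, and a local spray over a section of $h$ is itself a section of $h\times\id_{\B^{N}}$ over a subset of $U_{\alpha}\times\B^{N}$; hence the hypothesis must be imposed on $h\times\id_{\B^{N}}$ for every $N$. Finally, your local step glosses over the fact that an additive splitting $c_{A}-c_{B}$ of the transition data on $\Op(A\cap B)$ requires that data to be small; arranging that the $A$- and $B$-descriptions are close on $A\cap B$ is precisely where the dominating \emph{global} spray, the auxiliary Cartan pair of Lemma \ref{lemma:cartan_prime}, and the parametric Cartan--Oka--Weil approximation enter, and this is the technical heart of Theorem \ref{theorem:approximation}. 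These defects are repairable by following the paper's argument, but as written the proposal does not prove the theorem for general compact Hausdorff parameter spaces.
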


\begin{remark}
\label{remark:pullback}
A holomorphic submersion $\pi:Y\to S$ is said to be \emph{stratified convexly elliptic} if there exists a stratification of $S$ by closed complex subvarieties such that on each stratum $\Sigma$ the restriction $\pi^{-1}(\Sigma)\to \Sigma$ is convexly elliptic.
For such a submersion $\pi:Y\to S$ and any holomorphic map $F:X\to S$ from a Stein space $X$ with $F(X)\subset\pi(Y)$, the pullback submersion $h=F^{*}\pi:F^{*}Y\to X$ satisfies the assumption in Theorem \ref{theorem:oka_principle_for_sections}.
This will be used in the proof of the Oka principle for lifts (see the proof of Lemma \ref{lemma:parametric_spray}).
\end{remark}

%
%

\subsection{Homotopy approximation property: Reduction of Theorem \ref{theorem:oka_principle_for_sections}}

In order to prove Theorem \ref{theorem:oka_principle_for_sections}, we need to recall the tools used in the proofs of the previously known Oka principles.
In the proof of the Gromov type Oka principle, the following axiom plays a fundamental role.
It is used to prove the Heftungslemma for sections of subelliptic submersions (cf. \cite[Proposition 6.7.2]{Forstneric2017}).

\begin{definition}[{cf. \cite[Definition 6.6.5]{Forstneric2017}, \cite[Proposition 2.1]{Forstneric2010a}}]
\label{definition:HAP}
A holomorphic submersion $h:Z\to X$ enjoys $\HAPsec$ (the \emph{Homotopy Approximation Property  for sections}) if for  any Stein compact $L\subset X$, any compact $\cO(L)$-convex subset $K\subset L$, any compact Hausdorff spaces $P_{0}\subset P$ and any family of continuous maps $f_{0}:Q\times\Op L\to Z$ where $Q=P\times[0,1],\ Q_{0}=(P\times\{0\})\cup(P_{0}\times[0,1])$ such that
	\begin{enumerate}
	\item $h\circ f_{0}=\pr_{\Op L}$, and
	\item $f_{0}|_{Q_{0}\times\Op L}$ and $f_{0}|_{Q\times\Op K}$ are families of holomorphic maps,
	\end{enumerate}
there exists a homotopy $f_{t}:Q\times\Op L\to Z,\ t\in[0,1]$ such that the following hold for each $t\in[0,1]$;
	\begin{enumerate}[(a)]
	\item $h\circ f_{t}=\pr_{\Op L}$,
	\item $f_{t}=f_{0}$ on $Q_{0}\times\Op L$,
	\item $f_{t}|_{Q\times\Op K}$ is a family of holomorphic maps which approximates $f_{0}$ uniformly on $Q\times K$, and
	\item $f_{1}:Q\times\Op L\to Z$ is a family of holomorphic maps.
	\end{enumerate}
\end{definition}

The following is one of the crucial steps in the proof of the Gromov type Oka principle.
Recently, this kind of axiomatization of the proof of an Oka principle is generalized to the case of sheaves by Studer \cite{Studer2020}\footnote{Instead of $\HAPsec$, he used the axiom called \emph{weak flexibility} to prove his Oka principle, which is an axiomatization of the Heftungslemma \cite[Proposition 6.7.2]{Forstneric2017}.}.

\begin{theorem}[{cf. \cite[Theorem 6.6.6 and Proof of Proposition 6.7.2]{Forstneric2017}}] 
\label{theorem:HAP=>POP}
Let $h:Z\to X$ be a stratified holomorphic submersion onto a Stein space $X$.
Assume that for each stratum $\Sigma\subset X$ there exists an open cover $\{U_{\alpha}\}_{\alpha}$ of $\Sigma$ such that $h\times\id_{\B^{N}}:h^{-1}(U_{\alpha})\times\B^{N}\to U_{\alpha}\times\B^{N}$ enjoys $\HAPsec$ for each $N\in\N$ .
Then $h$ enjoys $\POPAIsec$.
\end{theorem}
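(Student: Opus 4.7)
The plan is to invoke Theorem \ref{theorem:HAP=>POP}: it reduces the assertion $\POPAIsec$ for $h$ to checking $\HAPsec$ for each product submersion $g_{\alpha,N}:=h\times\id_{\B^{N}}:h^{-1}(U_{\alpha})\times\B^{N}\to U_{\alpha}\times\B^{N}$, where $\Sigma_{k}$ is a stratum, $\alpha$ indexes its distinguished open cover, and $N\in\N$. The real content is thus to extract $\HAPsec$ for $g:=g_{\alpha,N}$ from the dominating-global-spray hypothesis (2), and I would do this by a bump-by-bump induction modelled on the proof of the Gromov type Oka principle \cite[\S 6.7]{Forstneric2017}.

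Write $\tilde{X}=U_{\alpha}\times\B^{N}$ and $\tilde{Z}=h^{-1}(U_{\alpha})\times\B^{N}$, and fix $\HAPsec$ data: a Stein compact $L\subset\tilde{X}$, an $\cO(L)$-convex $K\subset L$, compact Hausdorff $P_{0}\subset P$, $Q=P\times[0,1]$, $Q_{0}=(P\times\{0\})\cup(P_{0}\times[0,1])$, and a family $f_{0}:Q\times\Op L\to\tilde{Z}$ satisfying (1)--(2) of Definition \ref{definition:HAP}. Choosing a strictly plurisubharmonic function on a Stein neighborhood of $L$ that cuts out $K$, I would exhaust $L$ by a chain of $\cO(\tilde{X})$-convex compacta $K=K^{(0)}\subset K^{(1)}\subset\cdots\subset K^{(r)}\supset L$ with $K^{(j+1)}=K^{(j)}\cup B_{j}$, where each $B_{j}$ is --- up to a holomorphic change of coordinates on $\tilde{X}$ --- a convex compact set (either a noncritical bump or a Morse-theoretic local model of a critical handle) forming a Cartan pair $(K^{(j)},B_{j})$. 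The desired homotopy $f_{t}$ is then built on $\Op K^{(j)}$ by induction on $j$.

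The heart of the argument is the bump step. On each $B_{j}$, hypothesis (2) of Theorem \ref{theorem:oka_principle_for_sections} supplies a dominating global $g$-spray $s:\Op B_{j}\times\C^{M}\to\tilde{Z}$ over the holomorphic extension of $f_{t}$ to $\Op B_{j}$; the existence of such an extension --- continuous in $q\in Q$ and holomorphic in the base for each fixed $q$ --- is provided by Lemma \ref{lemma:local_spray} together with the noncritical topology of the bump. By Lemma \ref{lemma:right_inverse}, $s$ carries a fiber-preserving right inverse $\iota$ of $(\pr,s)$ near the graph of $f_{t}$, continuous in $q$ and holomorphic in base and spray variables for each fixed $q$. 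Comparing the two local holomorphic extensions of $f_{t}$ over $\Op(K^{(j)}\cap B_{j})$ through $\iota$ produces a holomorphic $\C^{M}$-valued transition family, which splits additively across the Cartan pair by the parametric bounded linear solution of the Cousin-I problem; twisting the two extensions by the resulting spray parameters glues them into a single holomorphic family on $Q\times\Op K^{(j+1)}$ that is close to $f_{0}$ on $K$, equals $f_{0}$ on $Q_{0}$, and advances the induction.

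The main obstacle is the parametric bookkeeping in the bump step: the sprays, their right inverses, and the Cartan splittings must all be organized to depend continuously on $q\in Q$, vanish identically on $Q_{0}$, and obey quantitative bounds tight enough for the approximation estimates to close across all $r$ bumps. Once $\HAPsec$ is secured for every $g_{\alpha,N}$, Theorem \ref{theorem:HAP=>POP} immediately delivers $\POPAIsec$ for $h$.
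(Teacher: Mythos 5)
There is a fundamental mismatch between what you prove and what the statement asserts. Theorem \ref{theorem:HAP=>POP} takes as \emph{hypothesis} that the product submersions $h\times\id_{\B^{N}}:h^{-1}(U_{\alpha})\times\B^{N}\to U_{\alpha}\times\B^{N}$ enjoy $\HAPsec$, and its content is the passage from this local, unparametrized-base property to the global parametric property $\POPAIsec$ of $h$ over the stratified Stein base. Your proposal opens by ``invoking Theorem \ref{theorem:HAP=>POP}'' --- i.e.\ by assuming the very statement to be proved --- and then devotes all of its effort to deducing $\HAPsec$ for $g_{\alpha,N}$ from the dominating-global-spray hypothesis (2) of Theorem \ref{theorem:oka_principle_for_sections}. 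That hypothesis is not part of the statement under review; what you have sketched is essentially the content of Proposition \ref{proposition:reduction} and Theorem \ref{theorem:approximation}, which in the paper is used to deduce Theorem \ref{theorem:oka_principle_for_sections} \emph{from} Theorem \ref{theorem:HAP=>POP}, not to prove Theorem \ref{theorem:HAP=>POP} itself. As written, the argument is circular and proves a different implication.

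The genuine content you would need --- and which is entirely absent from the proposal --- is the machinery that converts local $\HAPsec$ into $\POPAIsec$: the induction over a normal exhaustion of the Stein space $X$ by sublevel sets of a strictly plurisubharmonic exhaustion function, the descending induction over the strata $X_{k}$ of the stratification, the Heftungslemma gluing local sprays over Cartan pairs (this is precisely where the factor $\B^{N}$ in the hypothesis is needed, since one must approximate dominating local sprays over sections, cf.\ Remark \ref{remark:HAP}), the crossing of critical levels via totally real handles, the jet/interpolation condition along the subvariety $X'$, and the bookkeeping for the parameter pair $P_{0}\subset P$. The paper does not reprove any of this: Theorem \ref{theorem:HAP=>POP} is quoted from Forstneri\v{c} \cite[Theorem 6.6.6 and proof of Proposition 6.7.2]{Forstneric2017}, with Remark \ref{remark:HAP} recording the only modification (the presence of $\B^{N}$). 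So an acceptable answer here is either a careful citation of that theorem together with a justification that the $\B^{N}$-strengthened hypothesis suffices for the spray-approximation step in the Heftungslemma, or a genuine reproduction of Forstneri\v{c}'s scheme; your proposal supplies neither.
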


\begin{remark}
\label{remark:HAP}
In \cite[Theorem 6.6.6]{Forstneric2017}, it is only assumed that $h:h^{-1}(U_{\alpha})\to U_{\alpha}$ enjoys $\HAPsec$ in the above situation (i.e. $\B^{N}$ does not appear).
In the proof of the Heftungslemma \cite[Proposition 6.7.2]{Forstneric2017} which is used to prove $\POPAIsec$, however, we need to approximate dominating local sprays over sections.
This is the reason why we need to assume that $h\times\id_{\B^{N}}:h^{-1}(U_{\alpha})\times\B^{N}\to U_{\alpha}\times\B^{N}$ enjoys $\HAPsec$ for each $N\in\N$.
\end{remark}

Thanks to Theorem \ref{theorem:HAP=>POP}, Theorem \ref{theorem:oka_principle_for_sections} is reduced to the following proposition.

\begin{proposition}
\label{proposition:reduction}
Let $h:Z\to X$ be a holomorphic submersion onto a complex manifold $X$.
Assume that for any compact set $K\subset X$ which is convex in some holomorphic coordinate system and any holomorphic section $f:\Op K\to Z$ there exists a dominating global $h$-spray $\Op K\times\C^{N}\to Z$ over $f$.
Then $h$ enjoys $\HAPsec$.
\end{proposition}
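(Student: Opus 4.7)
The proof follows the classical blueprint for deriving HAP from subellipticity (see \cite[\S 6.6--6.7]{Forstneric2017}), with the simplification that our hypothesis already supplies \emph{global} dominating sprays over each holomorphic section on a convex compact set. The plan is an induction on an exhaustion: cover $L$ by compact sets $B_{1},\dots,B_{n}$, each convex in a holomorphic coordinate chart of $X$, and assemble them into a chain $K=K_{0}\subset K_{1}\subset\cdots\subset K_{n}\supset L$ with $K_{j}=K_{j-1}\cup B_{j}$ forming a Cartan pair. An induction on $j$ reduces $\HAPsec$ to the one-step problem of attaching a single convex bump $B$ to a Cartan partner $A$, where $f_{0}\colon Q\times\Op(A\cup B)\to Z$ is already holomorphic on $Q\times\Op A$ and on $Q_{0}\times\Op(A\cup B)$, and one seeks a homotopy $f_{t}$ with $f_{1}$ holomorphic on $Q\times\Op(A\cup B)$.

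For this one-step problem I would first construct a parametric dominating $h$-spray on $\Op B$ adapted to the $Q$-family $f_{0}$. The hypothesis gives, for each $q\in Q_{0}$ (where $f_{0}(q,\cdot)$ is holomorphic on $\Op B$), a global dominating spray $s_{q}\colon\Op B\times\C^{N}\to Z$ over $f_{0}(q,\cdot)$. Combining these with Lemma \ref{lemma:right_inverse}, which linearizes the spray to a right inverse defined in a neighborhood of the graph, and with a finite covering of $Q$ by open sets on which $f_{0}(q,\cdot)|_{\Op(A\cap B)}$ stays uniformly close to a fixed holomorphic reference section on $\Op B$, I would assemble by a partition-of-unity argument in $q$ a continuous family $s\colon Q\times\Op B\times\C^{N}\to Z$ of dominating sprays over $f_{0}|_{Q\times\Op B}$, trivial over $Q_{0}$. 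Lemma \ref{lemma:right_inverse} then converts the problem of upgrading $f_{0}|_{Q\times\Op B}$ to a holomorphic family (matching $f_{0}|_{Q\times\Op A}$ on the overlap) into an Oka--Weil approximation problem for a $Q$-family of $\C^{N}$-valued maps on a Stein neighborhood of $B$.

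The Oka--Weil approximation, combined with the classical parametric Cartan splitting lemma for $\C^{N}$-valued cochains over the Cartan pair $(A,B)$, yields a holomorphic $Q$-family $f_{1}$ on $\Op(A\cup B)$ which approximates $f_{0}$ on $\Op A$ and coincides with $f_{0}$ on $Q_{0}\times\Op(A\cup B)$ and on $\Op K$. A cutoff in the parameter $t\in[0,1]$ that is stationary near $Q_{0}$ and near $K$ produces the desired homotopy $f_{t}$ with the full set of boundary, interpolation and approximation properties of $\HAPsec$. The main obstacle is the parametric spray construction in Step 2: the hypothesis furnishes a spray attached to one fixed holomorphic section, whereas $\HAPsec$ demands a continuously varying family over an arbitrary compact Hausdorff parameter space while simultaneously pinning down the homotopy on $Q_{0}$ and controlling the approximation on $K$. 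Managing these two constraints in tandem with the partition-of-unity assembly of the $s_{q}$'s is the technical heart of the proof.
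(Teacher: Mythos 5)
Your reduction to a one-bump problem is the right general shape, but Step 2 --- the ``technical heart'' as you call it --- does not work as described, and the difficulty it hides is precisely what the paper's proof is organized around. For $q\notin Q_{0}$ the section $f_{0}(q,\cdot)$ is holomorphic only on $\Op A$, hence only on $\Op(A\cap B)$ inside $B$; a $\pi$-spray must have a holomorphic core, so ``a continuous family of dominating sprays over $f_{0}|_{Q\times\Op B}$'' simply does not exist, and no partition of unity in $q$ starting from the sprays attached to $q\in Q_{0}$ can manufacture one (averaging sprays with different, non-holomorphic cores is not an operation that produces a spray). Compounding this, the hypothesis yields \emph{global} sprays only over sections on \emph{convex} compacts, and in your chain $K_{j}=K_{j-1}\cup B_{j}$ the overlap $K_{j-1}\cap B_{j}$ --- the only region where $f_{0}(q,\cdot)$ is holomorphic for every $q$ --- need not be convex, so the hypothesis cannot be invoked there either. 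The paper resolves both issues simultaneously: it runs Forstneri\v{c}'s induction over a strongly pseudoconvex exhaustion and refines each attaching step, via Lemma \ref{lemma:very_special}, into \emph{very special} Cartan pairs, for which $A\cap B$ \emph{is} convex; the global dominating spray is then required only over $f_{0}(q,\cdot)|_{\Op(A\cap B)}$ (Proposition \ref{proposition:reduction'}, Theorem \ref{theorem:approximation}), and the passage from a single parameter value to a neighborhood in $Q$ is made with the right-inverse trick of Lemma \ref{lemma:right_inverse} and the stepwise extension method in $[0,1]$, not with partitions of unity.

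Even granting a usable spray near $A\cap B$, your gluing step is underspecified in a way that matters. The only spray over $\Op B'\subset\Op B$ available for \emph{all} of $Q$ is a local one (from Lemma \ref{lemma:local_spray}) pulled back through a retraction $Q^{1,1}\to Q_{0}^{1,1}$, so its core differs from $f_{0}$ off $Q_{0}$ and cannot be compared with the spray over $A$ directly on $A\cap B$. This is why the paper introduces the auxiliary Cartan pair $(A',B')$ of Lemma \ref{lemma:cartan_prime}, with $(A\cap B)\cap(A'\cap B')=\emptyset$ and $(A\cap B)\cup(A'\cap B')$ being $\cO(A')$-convex: the two lifts are interpolated by the parametric Cartan--Oka--Weil theorem on $\Op A'$ before any gluing takes place. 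Finally, amalgamating two nearby families of sections of a general submersion is Forstneri\v{c}'s Heftungslemma (which itself consumes a dominating spray), not literally a Cartan splitting of $\C^{N}$-valued cochains; and $\HAPsec$ asks for approximation on $K$, not interpolation, so ``coincides with $f_{0}$ on $\Op K$'' is not something you should (or can) arrange. To repair your argument you would essentially have to reconstruct the scheme of Theorem \ref{theorem:approximation}.
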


To simplify the approximation problem further, we use the induction used in the proof of the Forstneri\v{c} type Oka principle.
In the proof \cite[\S 5.7--5.13]{Forstneric2017}, the desired homotopy of families of sections is constructed inductively on an increasing exhausting sequence of compact strongly pseudoconvex domains (in fact, we need to retake an increasing sequence when we extend the homotopy across a totally real disc but this does not matter; see \cite[\S 5.11]{Forstneric2017}).

\begin{remark}
In the situation of Definition \ref{definition:HAP}, let us assume further that we have a compact subset $K\subset K'\subset\Op L$ and a homotopy $f_{t}:Q\times\Op K'\to Z,\ t\in[0,1]$ which satisfies (a)--(d) where $\Op L$ is replaced by $\Op K'$.
Then by using a continuous function $\chi:\Op L\to[0,1]$ such that $\chi|_{\Op K'}\equiv 1$ and $\Supp\chi$ is contained in a small neighborhood of $K'$, we can extend $f_{t}$ to the homotopy $f_{t}:Q\times\Op L\to Z,\ t\in[0,1]$ by $(q,x)\mapsto f_{\chi(x)t}(q,x)$.
This homotopy satisfies (a)--(c) and $f_{1}|_{Q\times\Op K'}$ is a family of holomorphic maps.
\end{remark}

By the above remark and Forstneri\v{c}'s induction, it suffices to consider the approximation problem only around each compact strongly pseudoconvex domain which appears in the exhausting sequence of a Stein manifold $\Op L$.
In the exhausting sequence $\cdots\subset K_{j}\subset K_{j+1}\subset\cdots$ used in Forstneri\v{c}'s induction, for each $j$ there exists a \emph{special Cartan pair} $(A,B)$ such that $K_{j}=A$ and $K_{j+1}=A\cup B$ (see \cite[\S5.10]{Forstneric2017}).
Let us recall the definition of Cartan pairs.

\begin{definition}[{cf. \cite[Definition 2.4]{Forstneric2019}}]
\label{definition:cartan_pair}
\ 
\begin{enumerate}[leftmargin=*]
	\item A pair $(A,B)$ of compact sets in a complex space is a \emph{Cartan pair} if each of $A,B,A\cap B$ and $A\cup B$ is a Stein compact and the separation condition $\overline{A\setminus B}\cap\overline{B\setminus A}=\emptyset$ is satisfied.
	\item A Cartan pair $(A,B)$ in a complex manifold is \emph{special} if there exists a holomorphic coordinate system on $\Op B$ in which $B$ and $A\cap B$ are regular\footnote{A compact set in a topological space is \emph{regular} if it coincides with the closure of its interior.} convex sets.
	\item A special Cartan pair $(A,B)$ is \emph{very special} if there exists a holomorphic coordinate system on $\Op B$ in which $B$ is convex and $A\cap B$ is the intersection of $B$ and a closed half space.
\end{enumerate}
\end{definition}

By using the following lemma of Forstneri\v{c}, we can refine the exhausting sequence in the induction by very special Cartan pairs.

\begin{lemma}[{cf. \cite[Lemma 2.5]{Forstneric2019}}]
\label{lemma:very_special}
Let $(A,B)$ be a special Cartan pair in a complex manifold $X$.
Then for any open neighborhood $U\subset X$ of $A$ there exists a finite sequence
\begin{align*}
A\subset A_{1}\subset A_{2}\subset\cdots\subset A_{k}=A\cup B
\end{align*}
of compact sets such that $A_{1}\subset U$ and $A_{j+1}=A_{j}\cup B_{j}\ (j=1,\ldots,k-1)$ where each $(A_{j},B_{j})$ is a very special Cartan pair.
\end{lemma}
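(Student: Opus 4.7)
The plan is to enclose the convex compact set $A \cap B$ in a tightly fitting convex polytope $P$ contained in the prescribed neighborhood $U$, then recover $A \cup B$ from $A_{1} := A \cup (B \cap P)$ by peeling off the defining half-spaces of $P$ one at a time, at each stage attaching a slab-shaped piece of $B$. Each such attachment will be a very special Cartan pair.

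Working in the holomorphic coordinate system on $\Op B$ in which $B$ and $A \cap B$ are regular convex compact sets, I would first choose real affine forms $\phi_{1},\ldots,\phi_{m}$ and constants $c_{1},\ldots,c_{m}$ such that the polytope $P = \bigcap_{i=1}^{m}\{\phi_{i} \leq c_{i}\}$ satisfies $A \cap B \subset \mathrm{int}(P)$ and $A \cup (B \cap P) \subset U$; such a polytope exists because the convex compact set $A \cap B$ equals the intersection of all its closed affine half-space neighborhoods and compactness permits a finite subfamily. Then fix $\epsilon > 0$ so small that $\epsilon < c_{i} - \phi_{i}(x)$ for every $i$ and every $x \in A \cap B$. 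For $j = 1,\ldots,m+1$ set
\[ A_{j} = A \cup \Bigl(B \cap \bigcap_{i=1}^{m-j+1}\{\phi_{i} \leq c_{i}\}\Bigr), \]
and for $j = 1,\ldots,m$ set
\[ B_{j} = B \cap \bigcap_{i=1}^{m-j}\{\phi_{i} \leq c_{i}\} \cap \{\phi_{m-j+1} \geq c_{m-j+1} - \epsilon\}, \]
with the convention that an empty intersection denotes the whole coordinate chart; then $A_{1} \subset U$, $A_{m+1} = A \cup B$, and taking $k = m+1$ yields the required chain.

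The verification is then direct set-theoretic bookkeeping. The identity $A_{j} \cup B_{j} = A_{j+1}$ holds because the two half-spaces $\{\phi_{m-j+1} \leq c_{m-j+1}\}$ and $\{\phi_{m-j+1} \geq c_{m-j+1} - \epsilon\}$ together cover the chart. Each $B_{j}$ is convex by construction. The choice of $\epsilon$ makes $A \cap B$ disjoint from $\{\phi_{m-j+1} \geq c_{m-j+1} - \epsilon\}$, from which one reads off $A_{j} \cap B_{j} = B_{j} \cap \{\phi_{m-j+1} \leq c_{m-j+1}\}$ — exactly the intersection of $B_{j}$ with a closed half-space demanded by Definition \ref{definition:cartan_pair}\,(3). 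The separation condition $\overline{A_{j} \setminus B_{j}} \cap \overline{B_{j} \setminus A_{j}} = \emptyset$ follows because these two sets lie on opposite sides of the open slab $\{c_{m-j+1} - \epsilon < \phi_{m-j+1} < c_{m-j+1}\}$, while the $A$-summand in $A_{j} \setminus B_{j}$ is disjoint from $B_{j}$ (again by the $\epsilon$-choice).

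The only non-bookkeeping point — and the main technical obstacle — is to verify that each of $A_{j}, B_{j}, A_{j} \cap B_{j}$ and $A_{j} \cup B_{j}$ is a Stein compact, as required by the Cartan-pair axiom. This I would handle inductively: convex compact subsets of the coordinate chart are Stein compact, intersection with such a convex Stein compact preserves the property, and the union of Stein compacts forming a Cartan pair is again a Stein compact, which is the standard ingredient underlying Cartan's splitting technique in Oka theory. Starting from the Stein-compactness of $A$ and $B$ supplied by the hypothesis that $(A,B)$ is a special Cartan pair, these principles confirm along the chain that each $(A_{j},B_{j})$ is a very special Cartan pair, completing the argument.
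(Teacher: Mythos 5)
Your construction --- enclosing $A\cap B$ in a convex polytope $P=\bigcap_{i=1}^{m}\{\phi_i\le c_i\}$ with $A\cup(B\cap P)\subset U$ and then peeling off the defining half-spaces one at a time, attaching the slab $B_j$ along a single hyperplane --- is correct and is essentially the proof of the cited result \cite[Lemma 2.5]{Forstneric2019}, which this paper invokes without reproving. The one point to tighten is the Stein compactness of the unions $A_j=A\cup\bigl(B\cap\bigcap_{i\le m-j+1}\{\phi_i\le c_i\}\bigr)$: your justification is circular as stated, since Stein compactness of the union is part of the \emph{definition} of a Cartan pair and so cannot be deduced from the pair ``forming a Cartan pair''; instead invoke the standard gluing fact that if two Stein compacts satisfy the separation condition and have Stein compact intersection, then their union is a Stein compact (here applied to $A$ and the convex compact $B\cap\bigcap_{i\le m-j+1}\{\phi_i\le c_i\}$, whose separation follows from that of $(A,B)$ because $A\cap B\subset\mathrm{int}\,P$).
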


By the previous reduction to Proposition \ref{proposition:reduction} and the above arguments, Theorem \ref{theorem:oka_principle_for_sections} is further reduced to the following proposition.
It is the goal of the next subsection.

\begin{proposition}
\label{proposition:reduction'}
Let $h:Z\to X$ be a holomorphic submersion onto a complex manifold $X$ and $(A,B)$ be a very special Cartan pair in $X$.
Assume that for any holomorphic section $f:\Op(A\cap B)\to Z$ there exists a dominating global $h$-spray $\Op(A\cap B)\times\C^{N}\to Z$ over $f$.
Then for any compact Hausdorff spaces $P_{0}\subset P$ and any family of continuous maps $f_{0}:Q\times\Op(A\cup B)\to Z$ where $Q=P\times[0,1],\ Q_{0}=(P\times\{0\})\cup(P_{0}\times[0,1])$ such that
	\begin{enumerate}
	\item $h\circ f_{0}=\pr_{\Op(A\cup B)}$, and
	\item $f_{0}|_{Q_{0}\times\Op(A\cup B)}$ and $f_{0}|_{Q\times\Op A}$ are families of holomorphic maps,
	\end{enumerate}
there exists a homotopy $f_{t}:Q\times\Op(A\cup B)\to Z,\ t\in[0,1]$ such that the following hold for each $t\in[0,1]$;
	\begin{enumerate}[(a)]
	\item $h\circ f_{t}=\pr_{\Op(A\cup B)}$,
	\item $f_{t}=f_{0}$ on $Q_{0}\times\Op(A\cup B)$,
	\item $f_{t}|_{Q\times\Op A}$ is a family of holomorphic maps which approximates $f_{0}$ uniformly on $Q\times A$, and
	\item $f_{1}:Q\times\Op(A\cup B)\to Z$ is a family of holomorphic maps.
	\end{enumerate}
\end{proposition}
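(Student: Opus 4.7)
The proof adapts the classical Heftungslemma (cf. \cite[Proposition 6.7.2]{Forstneric2017}) to the parametric setting with interpolation along $Q_0$, exploiting the very special structure of the Cartan pair: in the distinguished coordinate system on $\Op B$, both $B$ and $A\cap B$ are convex, so Oka--Weil approximation and Cartan's splitting lemma are available. The hypothesis supplies the spray-based linearization on $\Op(A\cap B)$ needed to convert the nonlinear section problem into a linear $\C^{N}$-valued problem.

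First I would construct a parametric dominating global $h$-spray $s\colon Q\times\Op(A\cap B)\times\C^{N}\to Z$ over the holomorphic family $f_0|_{Q\times\Op(A\cap B)}$, arranged to be compatible with the already holomorphic structure of $f_0$ on $Q_0$. Lemma \ref{lemma:local_spray} provides a parametric local spray; to promote it to a global one in the fiber direction, cover the compact Hausdorff space $Q$ by finitely many neighborhoods on each of which the hypothesis yields a global spray over a reference holomorphic section, and assemble these using a partition of unity on $Q$ combined with the standard amalgamation of sprays (summing fiber dimensions). Next, exploiting the convexity of $B$, approximate $f_0|_{Q\times\Op B}$ by a holomorphic family $g\colon Q\times\Op B\to Z$ with $g$ close to $f_0$ on $Q\times(A\cap B)$ and $g=f_0$ on $Q_0\times\Op B$; via Lemma \ref{lemma:right_inverse} this reduces to Oka--Weil approximation of $\C^{N}$-valued functions on the convex $B$ in the spray coordinates, with interpolation on $Q_0$. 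Then, on $\Op(A\cap B)$, write $g=s(\cdot,\cdot,\xi)$ via a holomorphic family $\xi\colon Q\times\Op(A\cap B)\to\C^{N}$ close to $0$ and vanishing on $Q_0$, and apply the parametric Cartan splitting lemma on the convex $A\cap B$ to decompose $\xi=\xi_A-\xi_B$ with $\xi_A\colon Q\times\Op A\to\C^{N}$ and $\xi_B\colon Q\times\Op B\to\C^{N}$ holomorphic in $x$, close to $0$, and vanishing on $Q_0$. Finally, glue: modify $f_0$ on $\Op A$ via $\xi_A$ and the spray (interpreted near $A\cap B$), and modify $g$ on $\Op B$ via $\xi_B$ analogously; the splitting forces the two modifications to agree on $\Op(A\cap B)$, yielding the desired holomorphic $f_1$. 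The full homotopy $f_t$, $t\in[0,1]$, arises from scaling all corrections linearly from $0$ at $t=0$ to their full values at $t=1$.

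The main obstacle will be the second step, where the approximation on $\Op B$ must be produced using only the spray on $\Op(A\cap B)$ together with the continuity of $f_0$ on $\Op B$: the approach is to transfer the approximation problem into $\C^{N}$-valued approximation on the convex $B$ via the spray's right inverse (Lemma \ref{lemma:right_inverse}), and apply Runge-type approximation. The remaining steps---parametric globalization of sprays, parametric Cartan splitting with vanishing on $Q_0$, and the assembly of the homotopy---are routine adaptations of standard techniques, provided the interpolation constraints are tracked throughout.
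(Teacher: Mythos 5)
Your overall architecture (linearize via a spray on $\Op(A\cap B)$, approximate on $\Op B$, split, glue) is the classical Heftungslemma, but your second step conceals the entire difficulty and, as proposed, does not work. The hypothesis only provides dominating global sprays over sections defined near $A\cap B$; in particular the right inverse from Lemma \ref{lemma:right_inverse} linearizes the problem only on $\Op(A\cap B)$, not on $\Op B$. For a parameter $q\notin Q_0$ the section $f_0(q,\cdot)$ is merely continuous on $B\setminus\Op A$, its graph there is not in the image of any spray you have constructed, and the fibers of $h$ over $B$ carry no linear or Oka-type structure by assumption. So there is no way to ``transfer the approximation problem into $\C^N$-valued approximation on the convex $B$'': the convexity of $B$ buys you nothing without spray coordinates over all of $\Op B$. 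Producing a holomorphic family $g$ on $Q\times\Op B$ that equals $f_0$ on $Q_0$ and is close to $f_0$ on $Q\times(A\cap B)$ is essentially the statement to be proved, with $B$ in place of $A\cup B$.

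The paper's proof (via Theorem \ref{theorem:approximation}) resolves this with a device your outline is missing. Using Lemma \ref{lemma:cartan_prime} one chooses a second Cartan pair $(A',B')$ inside $B$ with $A\cap B\subset A'$, $A'\cup B'=B$, $(A\cap B)\cap(A'\cap B')=\emptyset$ and $(A\cap B)\cup(A'\cap B')$ being $\cO(A')$-convex. Over $\Op B'$ one takes a dominating local spray over $f_0|_{Q_0\times\Op B'}$ (where $f_0$ is already holomorphic) and extends it to all parameters by precomposing with a retraction $\rho:Q\to Q_0$; its core is $f_0\circ\rho$, which is holomorphic in $x$ but in general far from $f_0$. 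The discrepancy is then repaired on $A'$: this spray and a local spray over $f_0|_{Q\times\Op A}$ are both lifted to $\C^N$-valued maps through the given global spray, on $\Op(A'\cap B')$ and on $\Op(A\cap B)$ respectively, and since these two regions are disjoint with $\cO(A')$-convex union, the parametric Cartan--Oka--Weil theorem produces one holomorphic family of sprays over $\Op A'$ approximating both; Forstneri\v{c}'s Heftungslemma then amalgamates over $\Op B$ and finally over $\Op(A\cup B)$. A secondary gap: you cannot assemble a single parametric global spray over all of $Q\times\Op(A\cap B)$ by a partition of unity on $Q$, since $Z$ has no linear structure in which to average sprays; the paper instead keeps finitely many global sprays over compact pieces of $Q$ (obtained from the pointwise hypothesis and Lemma \ref{lemma:right_inverse}) and runs the stepwise extension method, deforming the family one parameter piece at a time.
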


%
%

\subsection{Approximation theorem: Proof of Proposition \ref{proposition:reduction'}}
\label{subsection:approximation}

Proposition \ref{proposition:reduction'} is an immediate corollary of the following approximation theorem.
It generalizes the approximation theorem \cite[Theorem 2.7]{Kusakabe2019} which was used to prove the implication from convex ellipticity to $\CAP$ for manifolds.

\begin{theorem}
\label{theorem:approximation}
Let $h:Z\to X$ be a holomorphic submersion onto a complex manifold $X$, $(A,B)$ be a very special Cartan pair in $X$, $P_{0}\subset P$ be compact Hausdorff spaces and $f_{0}:Q\times\Op(A\cup B)\to Z$ be a family of continuous maps where $Q=P\times[0,1],\ Q_{0}=(P\times\{0\})\cup(P_{0}\times[0,1])$ such that
	\begin{enumerate}
	\item $h\circ f_{0}=\pr_{\Op(A\cup B)}$, and
	\item $f_{0}|_{Q_{0}\times\Op(A\cup B)}$ and $f_{0}|_{Q\times\Op A}$ are families of holomorphic maps.
	\end{enumerate}
Assume that for each $q\in Q$ there exists a dominating global $h$-spray $\Op(A\cap B)\times\C^N\to Z$ over $f_0(q,\cdot)|_{\Op(A\cap B)}$.
Then there exists a homotopy $f_{t}:Q\times\Op(A\cup B)\to Z,\ t\in[0,1]$ such that the following hold for each $t\in[0,1]$;
	\begin{enumerate}[(a)]
	\item $h\circ f_{t}=\pr_{\Op(A\cup B)}$,
	\item $f_{t}=f_{0}$ on $Q_{0}\times\Op(A\cup B)$,
	\item $f_{t}|_{Q\times\Op A}$ is a family of holomorphic maps which approximates $f_{0}$ uniformly on $Q\times A$, and
	\item $f_{1}:Q\times\Op(A\cup B)\to Z$ is a family of holomorphic maps.
	\end{enumerate}
\end{theorem}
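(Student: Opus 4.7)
The plan is to adapt the classical Heftungslemma of Oka theory (\cite[Proposition 6.7.2]{Forstneric2017}) to the parametric convexly elliptic setting, where a dominating global spray is available only over $\Op(A\cap B)$ rather than over a full neighborhood of $A$ or $B$. The proof divides into three stages: upgrading the pointwise dominating global sprays from the hypothesis to a single parametric spray; a parametric Cartan splitting of the spray variable on $\Op(A\cap B)$; and assembling the homotopy.

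First, fix $q_0\in Q$ and let $s_{q_0}:\Op(A\cap B)\times\C^{N_{q_0}}\to Z$ be the dominating global $h$-spray over $f_0(q_0,\cdot)|_{\Op(A\cap B)}$ supplied by the hypothesis. By Lemma \ref{lemma:right_inverse}, $s_{q_0}$ admits a continuous, fiber-preserving right inverse $\iota_{q_0}$ defined on a neighborhood of $\Gamma_{f_0(q_0,\cdot)}$ and holomorphic along the base. For $q$ close to $q_0$, the holomorphic shift $\phi_q(x):=\pr_{\C^{N_{q_0}}}\circ\iota_{q_0}(x,f_0(q,x))$ satisfies $s_{q_0}(x,\phi_q(x))=f_0(q,x)$, so $s^{(q_0)}(q,x,w):=s_{q_0}(x,\phi_q(x)+w)$ is a parametric dominating global spray over $f_0$ on a neighborhood $V_{q_0}\subset Q$ of $q_0$. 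Compactness of $Q$ yields a finite cover $\{V_{q_i}\}_{i=1}^k$; the local sprays $s^{(q_i)}$ are then stitched into a single parametric dominating global spray $s:Q\times\Op(A\cap B)\times\C^N\to Z$ over $f_0|_{Q\times\Op(A\cap B)}$ by iterated composition: at step $i$, apply $s^{(q_i)}$ to the output of the previous step with parameter $\chi_i(q)w_i$, where $s^{(q_i)}$ is pre-shifted via its right inverse to remain a spray over the updated base and $\{\chi_i\}$ is a partition of unity subordinate to $\{V_{q_i}\}$. At $w=0$ this recovers $f_0$, and the derivative in $w$ is surjective because $\sum_i\chi_i\equiv 1$ keeps at least one spray active at each $q$.

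Next, work in the coordinate system on $\Op B$ from the very special Cartan pair, in which $B$ is convex and $A\cap B=B\cap\overline H$ for a closed half-space $\overline H$. Using an Oka--Weil approximation on the convex set $B$, applied to the holomorphic spray map $s$ viewed through its right inverse, approximate $f_0|_{Q\times\Op B}$ by a family of holomorphic sections $\tilde f:Q\times\Op B\to Z$ equal to $f_0$ on $Q_0\times\Op B$. On $Q\times\Op(A\cap B)$ the dominance of $s$ (Lemma \ref{lemma:right_inverse}) gives $\tilde f(q,x)=s(q,x,\gamma(q,x))$ for a small holomorphic family $\gamma:Q\times\Op(A\cap B)\to\C^N$ vanishing on $Q_0$. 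A parametric Cartan splitting (a bounded right inverse of $\bar\partial$ on the Stein coordinate chart) then decomposes $\gamma=\beta-\alpha$ with $\alpha:Q\times\Op A\to\C^N$ and $\beta:Q\times\Op B\to\C^N$ both small, holomorphic in $x$, continuous in $q$, and vanishing on $Q_0$.

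Finally, assemble the homotopy $f_t$ by prescribing, on $Q\times\Op A$, the family $f_0$ deformed by the spray parameter $-t\alpha$ through an extension of the spray to $\Op A$ obtained by Oka--Weil approximation in the coordinate chart, and on $Q\times\Op B$, the family $\tilde f$ deformed by $-t\beta$ through the analogous extension to $\Op B$. The identity $\gamma+\alpha=\beta$ guarantees that the two pieces coincide on $Q\times\Op(A\cap B)$, giving a well-defined continuous homotopy with $f_1$ holomorphic on $\Op(A\cup B)$, $f_t=f_0$ on $Q_0\times\Op(A\cup B)$, and $f_t$ uniformly close to $f_0$ on $Q\times A$ by the smallness of $\alpha$. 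The hard part will be the first stage: combining pointwise dominating global sprays into a single parametric one while preserving dominance, which relies crucially on the right-inverse viewpoint of Lemma \ref{lemma:right_inverse}. A secondary difficulty is the Oka--Weil extension of the spray from $\Op(A\cap B)$ to $\Op A$ and $\Op B$, which is carried out through the coordinate-convex geometry supplied by the very special Cartan pair and avoids any circular appeal to Mergelyan approximation for $h$ itself.
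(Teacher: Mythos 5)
The central gap is in your second stage. You propose to ``approximate $f_0|_{Q\times\Op B}$ by a family of holomorphic sections $\tilde f:Q\times\Op B\to Z$'' via Oka--Weil applied through the right inverse of the spray $s$. But $s$ is a global dominating spray only over $\Op(A\cap B)$; over $B\setminus\Op A$ the family $f_0(q,\cdot)$ is merely continuous, and the only linearization available there is a local spray from Lemma \ref{lemma:local_spray}, whose fiber domain $W$ is a small neighborhood of $0$. A continuous section is not uniformly close to any holomorphic one, so no Oka--Weil argument can produce $\tilde f$ as an \emph{approximation} of $f_0$ over $B$; it can at best be joined to $f_0$ by a homotopy, and producing that homotopy is precisely the content of the theorem over $B$. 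The paper resolves this with two devices absent from your proposal: (i) the stepwise extension method in the $[0,1]$-factor of $Q=P\times[0,1]$, exploiting that $f_0$ is holomorphic at $\tau=0$ so that in each small $\tau$-step the continuous data is a small perturbation of already-holomorphic data (this is also the only route to conclusion (d)); and (ii) the auxiliary Cartan pair $(A',B')$ of Lemma \ref{lemma:cartan_prime}, with $A\cap B\subset A'$, $A'\cup B'=B$, $(A\cap B)\cap(A'\cap B')=\emptyset$ and $(A\cap B)\cup(A'\cap B')$ being $\cO(A')$-convex. One takes a \emph{local} dominating spray over $\Op B'$ (over the parameter set where $f_0$ is holomorphic, transported to all of $Q^{1,1}$ by a retraction $\rho:Q^{1,1}\to Q^{1,1}_0$), lifts it and the local spray over $\Op A$ through the global spray on the two \emph{disjoint} sets $A\cap B$ and $A'\cap B'$, applies Cartan--Oka--Weil on their union inside $A'$, and then glues twice with the Heftungslemma. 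Your splitting $\gamma=\beta-\alpha$ over $A\cap B$ followed by spray deformations on $\Op A$ and $\Op B$ has no analogue of this and cannot handle the region $B\setminus\Op A$.

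Two further steps do not go through as written. In your first stage, stitching the pointwise global sprays into a single parametric dominating global spray over all of $Q\times\Op(A\cap B)$ by iterated composition with a partition of unity is not justified: a spray over a section cannot be composed with another spray over the same section away from $w=0$ (the second spray is not defined over the points reached by the first), and the right inverses from Lemma \ref{lemma:right_inverse} exist only near the graph, so the composition would be global in $w$ only after restricting $w$ to a small set, defeating the purpose. The paper avoids this entirely by keeping one spray $s^{j,k}$ per parameter piece $Q^{j,k}$ and never forming a single spray over all of $Q$. Finally, in your third stage the ``Oka--Weil extension of the spray to $\Op A$'' is unavailable: $A$ need not lie in the coordinate chart of the very special Cartan pair (only $B$ and $A\cap B$ are convex there), and the spray is $Z$-valued rather than $\C^N$-valued; the paper instead takes a local dominating spray over $\Op A$ from Lemma \ref{lemma:local_spray} and compares it with the global spray only over $\Op(A\cap B)$.
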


In order to prove the above theorem, we need the following lemma which was proved implicitly in \cite[Proof of Theorem 2.2]{Kusakabe2019}.

\begin{lemma}
\label{lemma:cartan_prime}
Let $(A,B)$ be a very special Cartan pair in a complex manifold $X$.
Then for any open neighborhood $U\subset X$ of $A\cap B$ there exists a Cartan pair $(A',B')$ in $X$ such that
\begin{enumerate}
\item $A\cap B\subset A'\subset U$,
\item $A'\cup B'=B$,
\item $(A\cap B)\cap(A'\cap B')=\emptyset$, and
\item $(A\cap B)\cup(A'\cap B')$ is $\cO(A')$-convex.
\end{enumerate}
\end{lemma}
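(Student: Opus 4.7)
The plan is to construct $(A', B')$ as a pair of thickened half-space slices of $B$ inside the distinguished coordinate chart. By Definition \ref{definition:cartan_pair}(3), I can fix a holomorphic coordinate system on $\Op B$ in which $B$ is a compact convex subset of $\C^n$ and $A \cap B = B \cap \{\varrho \leq 0\}$, where $\varrho = \operatorname{Re} \ell$ for some complex affine function $\ell : \C^n \to \C$. In the trivial case $B \subset A$ the choice $A' = B$, $B' = \emptyset$ works; otherwise, I choose $\epsilon > 0$ so small that $B \cap \{\varrho \leq 2\epsilon\} \subset U$ and $B \cap \{\varrho > 2\epsilon\}$ is still non-empty, and set
\begin{align*}
A' := B \cap \{\varrho \leq 2\epsilon\}, \qquad B' := B \cap \{\varrho \geq \epsilon\}.
\end{align*}

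The four sets $A'$, $B'$, $A' \cap B' = B \cap \{\epsilon \leq \varrho \leq 2\epsilon\}$ and $A' \cup B' = B$ are compact and convex in the chart, hence Stein compacts. The separation condition reduces to $(B \cap \{\varrho \leq \epsilon\}) \cap (B \cap \{\varrho \geq 2\epsilon\}) = \emptyset$, which holds because the overlap $\{\epsilon \leq \varrho \leq 2\epsilon\}$ is \emph{thick} rather than a thin hypersurface slice; so $(A', B')$ is a Cartan pair. Conditions (1)-(3) are immediate from the construction: $A \cap B = B \cap \{\varrho \leq 0\} \subset A' \subset U$ by the choice of $\epsilon$, $A' \cup B' = B$, and $(A \cap B) \cap (A' \cap B') = B \cap \{\varrho \leq 0\} \cap \{\varrho \geq \epsilon\} = \emptyset$.

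For condition (4), I would write $E := (A \cap B) \cup (A' \cap B') = E_1 \cup E_2$ with $E_1 = B \cap \{\varrho \leq 0\}$ and $E_2 = B \cap \{\epsilon \leq \varrho \leq 2\epsilon\}$, two disjoint compact convex subsets of $\C^n$. Since $A'$ is convex in the chart and hence polynomially convex, the Oka--Weil theorem identifies $\widehat{E}_{\cO(A')}$ with the intersection of the polynomial hull of $E$ in $\C^n$ with $A'$, so it suffices to show that no $z_0 \in A' \setminus E$, which necessarily satisfies $\varrho(z_0) \in (0, \epsilon)$, lies in that polynomial hull. I would use the complex affine function $\ell$ to reduce to a planar problem: $\ell(E_1) \subset \{\operatorname{Re} \leq 0\}$ and $\ell(E_2) \subset \{\epsilon \leq \operatorname{Re} \leq 2\epsilon\}$ are disjoint compact convex planar sets, while $\lambda_0 := \ell(z_0)$ lies in the open strip $\{0 < \operatorname{Re} < \epsilon\}$. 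The complement of $\ell(E) = \ell(E_1) \cup \ell(E_2)$ in $\C$ is connected---indeed, the strip lies in the complement and extends to infinity, so any bounded component of the complement would have to sit in $\{\operatorname{Re} \leq 0\}$ or in $\{\operatorname{Re} \geq \epsilon\}$, hence would be a bounded component of the complement of one of the convex compacta $\ell(E_1)$, $\ell(E_2)$ in $\C$, which is impossible. Therefore $\ell(E)$ is polynomially convex in $\C$, and $\lambda_0 \notin \ell(E)$ yields a polynomial $F$ on $\C$ with $|F(\lambda_0)| > \max_{\ell(E)} |F|$; then $F \circ \ell$ is a polynomial on $\C^n$ witnessing $z_0 \notin \widehat{E}_{\cO(A')}$.

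The main technical point I foresee is the separation axiom for the Cartan pair: the naive choice with $A' \cap B'$ the thin slice $B \cap \{\varrho = \epsilon\}$ violates $\overline{A' \setminus B'} \cap \overline{B' \setminus A'} = \emptyset$, so one is forced into a thick overlap; once this is done, the polynomial-convexity verification for (4) reduces cleanly via $\ell$ to the planar argument above.
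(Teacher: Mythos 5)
Your construction is exactly the paper's: it likewise fixes the distinguished chart with $B$ convex and $A\cap B=B\cap\{\lambda\le 0\}$, sets $A'=B\cap\{\lambda\le 2\varepsilon\}$ and $B'=B\cap\{\lambda\ge\varepsilon\}$, and then simply asserts that the four properties hold, which you verify in detail (including the planar polynomial-convexity argument for (4)). The proposal is correct and takes essentially the same approach as the paper.
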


\begin{proof}
Since $(A,B)$ is a very special Cartan pair, there exist a holomorphic coordinate map $\varphi:\Op B\to\C^{n}$ and an affine $\R$-linear function $\lambda:\C^{n}\to\R$ such that $\varphi(B)$ is convex and $\varphi(A\cap B)=\{z\in\varphi(B):\lambda(z)\leq 0\}$.
Note that there exists $\varepsilon>0$ such that $\varphi^{-1}(\{z\in\varphi(B):\lambda(z)\leq 2\varepsilon\})\subset U$.
Then $A'=\varphi^{-1}(\{z\in\varphi(B):\lambda(z)\leq 2\varepsilon\})$ and $B'=\varphi^{-1}(\{z\in\varphi(B):\lambda(z)\geq\varepsilon\})$ satisfy the desired properties.
\end{proof}

\begin{proof}[Proof of Theorem \ref{theorem:approximation}]
By assumption, for each fixed $q=(p,\tau)\in Q$ there exists a dominating global $h$-spray $s':\Op (A\cap B)\times\C^{N}\to Z$ over $f_{0}(q,\cdot)|_{\Op(A\cap B)}$.
Since $s'$ is dominating, there exists a family of holomorphic maps $\varphi:\Op\,\{p\}\times\Op(A\cap B)\to\C^{N}$ such that $s'\circ(\pr_{\Op(A\cap B)},\varphi)=f_{0}(\cdot,\tau,\cdot)|_{\Op\,\{p\}\times\Op(A\cap B)}$ and $\varphi(p,\cdot)\equiv 0$ by Lemma \ref{lemma:right_inverse}.
Then
\begin{align*}
s=s'\circ(\pr_{\Op(A\cap B)},\pr_{\C^{N}}+\varphi\circ\pr_{\Op\,\{p\}\times\Op (A\cap B)}):\Op\,\{p\}\times\Op (A\cap B)\times\C^{N}\to Z
\end{align*}
is a dominating global $h$-spray over $f_{0}(\cdot,\tau,\cdot)|_{\Op\,\{p\}\times\Op (A\cap B)}$.
By Lemma \ref{lemma:right_inverse} again, there exists a family of holomorphic maps $\tilde f_{0}:\Op\,\{p\}\times\Op\,\{\tau\}\times\Op(A\cap B)\to\C^{N}$ such that $s\circ(\pr_{\Op\,\{p\}\times\Op(A\cap B)},\tilde f_{0})=f_{0}|_{\Op\,\{p\}\times\Op\,\{\tau\}\times\Op(A\cap B)}$ and $\tilde f_{0}(\cdot,\tau,\cdot)\equiv 0$.
Then $s\circ(\pr_{\Op\,\{p\}\times\Op(A\cap B)},\pr_{\C^{N}}+\tilde f_{0}\circ\pr_{\Op\,\{p\}\times\Op\,\{\tau\}\times\Op (A\cap B)})$ defines a dominating global $h$-spray $\Op\,\{p\}\times\Op\,\{\tau\}\times\Op (A\cap B)\times\C^{N}\to Z$ over $f_{0}|_{\Op\,\{p\}\times\Op\,\{\tau\}\times\Op(A\cap B)}$.
By using the above argument and the compactness of $P$, we can find
\begin{enumerate}[(i)]
	\item a cover of $P$ by compact subsets $P^{1},\ldots,P^{l}$,
	\item numbers $0=\tau^{0}<\tau^{1}<\cdots<\tau^{m}=1$,
	\item dominating global $h$-sprays $s^{j,k}:\Op P^j\times\Op (A\cap B)\times\C^{N}\to Z\ (j=1,\ldots,l,\ k=1,\ldots,m)$ over $f_0(\cdot,\tau^{k-1},\cdot)|_{\Op P^{j}\times\Op (A\cap B)}$, and
	\item families of holomorphic maps $\tilde f_{0}^{j,k}:Q^{j,k}\times\Op(A\cap B)\to\C^{N}$ such that $s^{j,k}\circ(\pr_{\Op P^{j}\times\Op(A\cap B)},\pr_{\C^{N}}+\tilde f_{0}^{j,k}\circ\pr_{Q^{j,k}\times\Op(A\cap B)}):Q^{j,k}\times\Op(A\cap B)\times\C^{N}\to Z$ are dominating global $h$-sprays over $f_0|_{Q^{j,k}\times\Op (A\cap B)}$ where $Q^{j,k}=\Op P^{j}\times\Op_{[\tau^{k-1},1]}[\tau^{k-1},\tau^{k}]$.
\end{enumerate}
Note that we may take common $N$ because we are free to increase the rank of a dominating spray (but this fact is not important in the proof).

In the following, we use the so-called \emph{stepwise extension method} (cf. \cite{Forstneric2010}).
We first deform $f_{0}$ on $Q^{1,1}\times\Op (A\cup B)$.
By \cite[Proposition 4.4]{Forstneric2010}, we may assume that $f_{0}|_{(\Op P_{0}\times[0,1])\times\Op (A\cup B)}$ is a family of holomorphic maps from the beginning.
Take an open neighborhood $U\subset X$ of $A\cap B$ such that $s^{1,1}:\Op P^{1}\times U\times\C^{N}\to Z$ and $\tilde f_{0}^{1,1}:Q^{1,1}\times U\to\C^{N}$ are defined and the $h$-spray $Q^{1,1}\times U\times\C^{N}\to Z$ in the above condition (iv) is dominating.
Since $(A,B)$ is a very special Cartan pair, we can take a Cartan pair $(A',B')$ which satisfies (1)--(4) in Lemma \ref{lemma:cartan_prime}.
Set
\begin{align*}
Q_{0}^{1,1}=Q^{1,1}\cap((P\times\{0\})\cup(\Op P_{0}\times[0,1])).
\end{align*}
Take a dominating $h$-spray $s'_{B'}:Q_{0}^{1,1}\times\Op B'\times W\to Z$ over $f_{0}|_{Q_{0}^{1,1}\times\Op B'}$ by using Lemma \ref{lemma:local_spray}.
By shrinking $W\ni 0$, we may assume that $W$ is convex and there exists a spray (with respect to the constant submersion) $\tilde s_{B'}':Q_{0}^{1,1}\times\Op(A'\cap B')\times W\to\C^{N}$ over $0$ such that
\begin{align*}
s^{1,1}\circ(\pr_{\Op P^{1}\times\Op(A'\cap B')},\tilde s_{B'}'+\tilde f_{0}^{1,1}\circ\pr_{Q^{1,1}_{0}\times\Op(A'\cap B')})=s_{B'}'|_{Q_{0}^{1,1}\times\Op(A'\cap B')\times W}
\end{align*}
by the above condition (iv) and Lemma \ref{lemma:right_inverse}.
Take a continuous function $\chi:\Op P^{1}\to[0,1]$ such that $\chi|_{\Op P_{0}\cap\Op  P^{1}}\equiv 1$ and $\Supp\chi$ is contained in a small neighborhood of $P_{0}\cap\Op P^{1}$.
By using this function, let us consider $\rho:Q^{1,1}\to Q_{0}^{1,1},\ (p,\tau)\mapsto (p,\chi(p)\tau)$.
Note that $\rho$ and $\id_{Q^{1,1}}$ are homotopic relative to $Q^{1,1}_{0}$.
Then we can consider the dominating $h$-spray $s_{B'}=s'_{B'}\circ(\rho\times\id_{\Op B'\times W}):Q^{1,1}\times\Op B'\times W\to Z$ and the spray $\tilde s_{B'}=(\tilde s_{B'}'+\tilde f_{0}^{1,1}\circ\pr_{Q^{1,1}_{0}\times\Op(A'\cap B')})\circ(\rho\times\id_{\Op(A'\cap B')\times W}):Q^{1,1}\times\Op(A'\cap B')\times W\to\C^{N}$.
These sprays satisfy
\begin{itemize}
\item $s^{1,1}(\pr_{\Op P^{1}\times\Op(A'\cap B')},\tilde s_{B'})=s_{B'}|_{Q^{1,1}\times\Op(A'\cap B')\times W}$, and
\item $\tilde s_{B'}$ is homotopic to $\tilde f_{0}^{1,1}\circ\pr_{Q^{1,1}\times\Op(A'\cap B')}$ relative to $Q^{1,1}_{0}\times\Op(A'\cap B')\times\{0\}$.
\end{itemize}
Similarly, retaking $W$ if necessary, we can take a dominating $h$-spray $s_{A}:Q^{1,1}\times\Op A\times W\to Z$ over $f_{0}|_{Q^{1,1}\times\Op A}$ and a spray $\tilde s_{A}':Q^{1,1}\times\Op(A\cap B)\times W\to\C^{N}$ over $0$ such that
\begin{align*}
s^{1,1}\circ(\pr_{\Op P^{1}\times\Op(A\cap B)},\tilde s_{A}'+\tilde f_{0}^{1,1}\circ\pr_{Q^{1,1}\times\Op(A\cap B)})=s_{A}|_{Q^{1,1}\times\Op(A\cap B)\times W}.
\end{align*}
If we set $\tilde s_{A}=\tilde s_{A}'+\tilde f_{0}^{1,1}\circ\pr_{Q^{1,1}\times\Op(A\cap B)}:Q^{1,1}\times\Op(A\cap B)\times W\to\C^{N}$, the following analogous conditions hold;
\begin{itemize}
\item $s^{1,1}(\pr_{\Op P^{1}\times\Op(A\cap B)},\tilde s_{A})=s_{A}|_{Q^{1,1}\times\Op(A\cap B)\times W}$, and
\item $\tilde s_{A}$ is homotopic to $\tilde f_{0}^{1,1}\circ\pr_{Q^{1,1}\times\Op(A\cap B)}$ relative to $Q^{1,1}\times\Op(A\cap B)\times\{0\}$.
\end{itemize}
Since $(A\cap B)\cap(A'\cap B')=\emptyset$, by using $\tilde s_{A}$, $\tilde s_{B'}$ and their homotopies, we can construct a continuous map $\tilde s'_{A'}:Q^{1,1}\times\Op A'\times W\to\C^{N}$ such that
\begin{itemize}
\item $\tilde s'_{A'}|_{Q^{1,1}\times\Op(A\cap B)\times W}=\tilde s_{A},\ \tilde  s'_{A'}|_{Q^{1,1}\times\Op(A'\cap B')\times W}=\tilde s_{B'}$, and
\item $\tilde s'_{A'}(\cdot,\cdot,0)$ is homotopic to $\tilde f_{0}^{1,1}$ relative to $(Q^{1,1}\times\Op(A\cap B))\cup(Q^{1,1}_{0}\times\Op A')$.
\end{itemize}
Since $(A\cap B)\cup(A'\cap B')$ is $\cO(A')$-convex and $W$ is convex, by the Cartan--Oka--Weil theorem with parameters (cf. \cite[Theorem 2.8.4]{Forstneric2017}), we can deform $\tilde s'_{A'}$ into a family of holomorphic maps $\tilde s_{A'}:Q^{1,1}\times\Op A'\times W\to\C^{N}$ by a homotopy $H_{t}:Q^{1,1}\times\Op A'\times W\to\C^{N},\ t\in[0,1]$ such that the following hold for each $t\in[0,1]$;
\begin{itemize}
\item $H_{0}=\tilde s'_{A'},\ H_{1}=\tilde s_{A'}$,
\item $H_{t}(\cdot,\cdot,0)|_{Q^{1,1}_{0}\times\Op A'}=\tilde f_{0}^{1,1}|_{Q^{1,1}_{0}\times\Op A'}$,  and
\item $H_{t}|_{Q^{1,1}\times\Op(A\cap B)\times W}$ (resp. $H_{t}|_{Q^{1,1}\times\Op(A'\cap B')\times W}$) is a family of holomorphic maps which approximates $\tilde s_{A}$ (resp. $\tilde s_{B'}$).
\end{itemize}
If we set $s_{A'}=s^{1,1}\circ(\pr_{\Op P^{1}\times\Op A'},\tilde s_{A'}):Q^{1,1}\times\Op A'\times W\to Z$, the following conditions hold;
\begin{itemize}
\item $s_{A'}$ is close to $s_{A}$ (resp. $s_{B'}$) on $Q^{1,1}\times\Op(A\cap B)\times W$ (resp. $Q^{1,1}\times\Op(A'\cap B')\times W$), and
\item $s_{A'}(\cdot,\cdot,0)|_{Q^{1,1}_{0}\times\Op A'}=f_{0}|_{Q^{1,1}_{0}\times\Op A'}$.
\end{itemize}
Then by Forstneri\v{c}'s Heftungslemma \cite[Proposition 5.9.2, Remark 5.9.4 (C) and p.\,254]{Forstneric2017} (see also \cite[Lemma 2.4]{Kusakabe2019}), $s_{A'}$ and $s_{B'}$ amalgamate into a spray $s_{B}:Q^{1,1}\times\Op B\times rW\to Z,\ 0<r<1$ which is close to $s_{A}$ on $Q^{1,1}\times\Op(A\cap B)\times rW$ and $s_{B}(\cdot,\cdot,0)|_{Q^{1,1}_{0}\times\Op B}=f_{0}|_{Q^{1,1}_{0}\times\Op B}$.
By gluing $s_{A}$ and $s_{B}$ again, we can obtain a spray $s_{A\cup B}:Q^{1,1}\times\Op(A\cup B)\times r'W\to Z,\ 0<r'<r$ such that $s_{A\cup B}(\cdot,\cdot,0):Q^{1,1}\times\Op(A\cup B)\to Z$ is a family of holomorphic sections which satisfies
\begin{itemize}
\item $s_{A\cup B}(\cdot,\cdot,0)=f_{0}$ on $Q_{0}^{1,1}\times\Op(A\cup B)$, and
\item $s_{A\cup B}(\cdot,\cdot,0)|_{Q^{1,1}\times\Op A}$ approximates $f_{0}$ uniformly on $Q^{1,1}\times A$.
\end{itemize}
Set $t_{j}=j/lm\ (j=1,\ldots,lm)$.
Since all steps are made by homotopies which are lifted to be $\C^{N}$-valued along $s^{1,1}$ on $\Op (A\cap B)$ and $\Op (A'\cap B')$, we can obtain a homotopy $f_{t}:Q^{1,1}\times\Op(A\cup B)\to Z,\ t\in[0,t_{1}]$ which joins $f_{0}$ and $s_{A\cup B}(\cdot,\cdot,0)$ such that all intermediate families satisfy the above conditions.
By using a continuous function $\chi':Q\to[0,1]$ such that $\chi'|_{Q^{1,1}}\equiv 1$ and $\Supp\chi'$ is contained in a small neighborhood of $Q^{1,1}$ and considering $f_{\chi(q)t}(q,x)$, we may assume that we have a homotopy $f_{t}:Q\times\Op(A\cup B)\to Z,\ t\in[0,t_{1}]$ from the beginning.
Note that for each $t\in[0,t_{1}]$ the family $f_{t}$ is close to $f_{0}$ on $Q\times\Op A$.
Thus, since the global $h$-sprays in (iii) and (iv) are dominating, we can modify $s^{j,k}$ and $\tilde f_{0}^{j,k}\ ((j,k)\neq(1,1))$ slightly to satisfy the conditions (iii) and (iv) where $f_{0}$ is replaced by $f_{t_{1}}$.
Then we can move to the next step to deform $f_{t_{1}}$ on $Q^{2,1}\times\Op (A\cup B)$ and obtain a homotopy $f_{t}:Q\times\Op(A\cup B)\to Z,\ t\in[t_{1},t_{2}]$.
In this way, we deform the family inductively on $Q^{j,k}\times\Op (A\cup B)$ in the following order;
\begin{align*}
(j,k)=(1,1),(2,1),(3,1),\ldots,(l,1),(1,2),(2,2),\ldots,(l,2),(1,3),\ldots,(l,m).
\end{align*}
In each step of the induction, we set
\begin{align*}
Q_{0}^{j,k}=Q^{j,k}\cap\left(\left(P\times\left\{\tau^{k-1}\right\}\right)\cup\left(\Op\left(P_{0}\cup\bigcup_{\lambda=1}^{j-1}P^{\lambda}\right)\times\left[\tau^{k-1},1\right]\right)\right).
\end{align*}
As a result, we obtain the desired homotopy $f_{t}:Q\times\Op(A\cup B)\to Z,\ t\in[0,1]$.
\end{proof}

%
%

\section{Oka Principle for lifts: Proof of Theorem \ref{theorem:characterization}}
\label{section:lifts}

In this section, we prove the main theorem (Theorem \ref{theorem:characterization}).
The implication from $\POPAI$ to convex ellipticity follows easily from Lemma \ref{lemma:local_spray}.
The converse implication is an immediate corollary of the following stronger result (see Remark \ref{remark:pullback} for the definition of stratified convexly elliptic submersions).

\begin{theorem}
\label{theorem:oka_principle_for_lifts}
Every stratified convexly elliptic submersion enjoys $\POPAI$.
\end{theorem}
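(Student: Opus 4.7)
The plan is to reduce the parametric Oka property for lifts to the section case handled by Theorem \ref{theorem:oka_principle_for_sections}. For each fixed $p \in P$, write $F_{p} = F(p,\cdot)$ and form the pullback submersion $h_{p} = F_{p}^{*}\pi : F_{p}^{*}Y \to X$; a holomorphic lift of $F_{p}$ is the same as a holomorphic section of $h_{p}$, and by Remark \ref{remark:pullback} each $h_{p}$ satisfies the hypothesis of Theorem \ref{theorem:oka_principle_for_sections}. Since $\POPAIsec$ already handles parameter spaces, the only outstanding issue is that the target submersion $h_{p}$ itself varies with $p$. I would handle this by first proving a parametric dominating-spray lemma: for a continuous family $f : P \times \Op K \to Y$ of holomorphic maps into $Y$, with $K$ a Stein compact in $X$, after passing to a finite closed cover $\{P^{j}\}$ of $P$ one can construct continuous families of dominating global $\pi$-sprays $s^{j} : P^{j} \times \Op K \times \C^{N} \to Y$ over $f|_{P^{j} \times \Op K}$.

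To prove this lemma, fix $p_{0} \in P$ and apply convex ellipticity of $\pi$ to the single holomorphic map $f(p_{0},\cdot) : \Op K \to Y$, obtaining a dominating global $\pi$-spray $s_{0} : \Op K \times \C^{N} \to Y$. For $p$ in a small neighborhood $V$ of $p_{0}$, the family $f(p,\cdot)$ is $C^{0}$-close to $s_{0}(\cdot,0) = f(p_{0},\cdot)$, so Lemma \ref{lemma:right_inverse} yields a continuous family of holomorphic maps $\varphi : V \times \Op K \to \C^{N}$ with $\varphi(p_{0},\cdot) \equiv 0$ and $s_{0}(\cdot,\varphi(p,\cdot)) = f(p,\cdot)$ fiberwise. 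The recentered spray $(x,w) \mapsto s_{0}(x, w + \varphi(p,x))$ is then a continuous family of dominating global $\pi$-sprays over $f|_{V \times \Op K}$, and compactness of $P$ extracts a finite cover $\{P^{j}\}$. This mirrors the opening step in the proof of Theorem \ref{theorem:approximation}.

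With the parametric spray lemma in hand, the remainder of the proof of Theorem \ref{theorem:oka_principle_for_lifts} follows the blueprint of Theorem \ref{theorem:approximation} essentially verbatim, but applied to lifts rather than sections. Namely, reduce to the case that $X$ is a Stein manifold, use Forstneri\v{c}'s induction along an exhaustion of $X$ refined via very special Cartan pairs (Lemma \ref{lemma:very_special}), and localize to a single very special Cartan pair $(A,B) \subset X$. Then use the parametric sprays over $\Op(A \cap B)$ together with Forstneri\v{c}'s Heftungslemma to glue approximating holomorphic lifts over $\Op A$ with lifts over $\Op B'$, applying the Cartan--Oka--Weil theorem with parameters in the $\C^{N}$-direction. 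A doubly-indexed stepwise extension over the cover $\{P^{j} \times [\tau^{k-1},\tau^{k}]\}$ of $Q = P \times [0,1]$ then concludes the argument. The main obstacle is the parametric spray lemma itself: convex ellipticity is formulated pointwise for a single holomorphic map, so one must carefully track how the produced spray deforms under small variation of its base point as a function of $p$ and verify that dominability is preserved; everything downstream is mechanical and parallels Section \ref{section:sections}.
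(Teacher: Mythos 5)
Your overall architecture --- rerun the HAP/Cartan-pair machinery for lifts, with the only genuinely new ingredient being a parametric dominating-spray lemma over $\Op(A\cap B)$ --- is exactly the paper's strategy, and your passage from a single parameter value to a neighborhood in $P$ (via Lemma \ref{lemma:right_inverse} and recentering) is correct and matches the opening of the proof of Theorem \ref{theorem:approximation}. The gap is in the base case of your spray lemma: you ``apply convex ellipticity of $\pi$ to the single holomorphic map $f(p_{0},\cdot):\Op K\to Y$'', but Definition \ref{definition:cell} only produces a dominating global spray when $K$ is a compact convex set in $\C^{n}$ \emph{and} $f(K)$ is contained in a single $\pi^{-1}(U_{\alpha})$ for the given open cover of $S$ (and, in the stratified case, only over one stratum at a time). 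For the $K$ you actually need --- a general Stein compact in $X$, or even $A\cap B$, whose image under $\pi\circ f=F$ will in general meet many $U_{\alpha}$ and several strata --- the existence of a dominating \emph{global} $\pi$-spray over $f(p_{0},\cdot)$ is not an instance of the definition; it is itself an Oka-principle-type globalization problem. (In the absolute case where $S$ is a point the cover is trivial and this issue disappears, which is why the analogous step for Oka manifolds in \cite{Kusakabe2019} is immediate; the essential difficulty of the relative version is concentrated precisely here, and your closing sentence locates the ``main obstacle'' in the $p$-dependence rather than in this existence question.)

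The paper closes this gap in Lemma \ref{lemma:parametric_spray} by a bootstrap through the section case: take a Stein neighborhood $V$ of the graph of $f(p,\cdot)$ inside $\Op K\times Y$ (Siu's theorem), observe that global $\pi$-sprays over $\pr_{Y}|_{V}$ are exactly holomorphic sections of the pullback submersion $(\pi\circ\pr_{Y}|_{V}\circ\pr_{V})^{*}\pi$ over the Stein space $V\times\C^{N}$, note that this pullback satisfies the hypotheses of Theorem \ref{theorem:oka_principle_for_sections} by Remark \ref{remark:pullback}, and then apply the already-established Oka principle for sections (interpolating along $V\times\{0\}$) to upgrade the local dominating spray furnished by Lemma \ref{lemma:local_spray} to a global one; composing with $(\pr_{\Op K},f)\times\id_{\C^{N}}$ then yields the dominating global spray over $f|_{\Op\,\{p\}\times\Op K}$, handling the single point and a whole neighborhood in $P$ in one stroke. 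You should replace your direct appeal to Definition \ref{definition:cell} by this (or an equivalent) argument; without it the induction over very special Cartan pairs cannot start.
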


In fact, all arguments in the previous section can be applied to this case by the straightforward generalizations (cf. \cite{Forstneric2010,Forstneric2010a}) except the part where we construct dominating sprays over families (the first part of the proof of Theorem \ref{theorem:approximation}).
This part is solved by the following lemma.

\begin{lemma}
\label{lemma:parametric_spray}
Let $\pi:Y\to S$ be a stratified convexly elliptic submersion, $K$ be a Stein compact in a complex space $X$, $P$ be a topological space and $f:P\times\Op K\to Y$ be a family of holomorphic maps.
Then for any point $p\in P$ there exists a dominating global $\pi$-spray $\Op\,\{p\}\times\Op K\times\C^{N}\to Y$ over $f|_{\Op\,\{p\}\times\Op K}$.
\end{lemma}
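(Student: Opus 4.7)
The plan is to first construct a dominating global $\pi$-spray over the single lift $f(p,\cdot)$ via a pullback reduction to Theorem~\ref{theorem:oka_principle_for_sections}, and then extend across a neighborhood of $p\in P$ using Lemma~\ref{lemma:local_spray} and Lemma~\ref{lemma:right_inverse} together with a local trivialization of $\pi$.

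For the single-parameter globalization, after shrinking I may assume that $K$ lies in a Stein space $X$ and that $F_{p}:=\pi\circ f(p,\cdot)$ extends holomorphically to $X$. I form the pullback submersion $h:=F_{p}^{*}\pi\times\id_{\C^{N}}:F_{p}^{*}Y\times\C^{N}\to X\times\C^{N}$, which is the pullback of $\pi$ along the holomorphic map $(x,w)\mapsto F_{p}(x)$ from the Stein space $X\times\C^{N}$. By Remark~\ref{remark:pullback}, $h$ satisfies the hypothesis of Theorem~\ref{theorem:oka_principle_for_sections} and hence enjoys $\POPAIsec$. Lemma~\ref{lemma:local_spray} produces a dominating local $\pi$-spray $s^{0}:\Op K\times W\to Y$ over $f(p,\cdot)$, with $W\subset\C^{N}$ a convex open neighborhood of $0$. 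Viewing $s^{0}$ as a local section of $h$, I extend it continuously to a section of $h$ on $X\times\C^{N}$ by cutoff interpolation with the baseline section $(x,w)\mapsto((x,f(p,x)),w)$, keeping holomorphicity on $X\times\{0\}$ and on a neighborhood of $K\times\overline{W'}$ for some compact convex $\overline{W'}\Subset W$. Applying $\POPAIsec$ of $h$ with closed subvariety $X\times\{0\}$, compact $\cO(X\times\C^{N})$-convex subset $K\times\overline{W'}$, and trivial parameter spaces yields a holomorphic section of $h$ on $X\times\C^{N}$ interpolating $f(p,\cdot)$ on $X\times\{0\}$ and uniformly approximating $s^{0}$ on $K\times\overline{W'}$. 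The $Y$-component gives a holomorphic map $s^{1}:X\times\C^{N}\to Y$ with $\pi\circ s^{1}=F_{p}\circ\pr_{X}$, $s^{1}(x,0)=f(p,x)$, and $s^{1}(x,\cdot)$ submersive at $0$ for $x\in K$---this last property being preserved by sufficiently close approximation on a neighborhood of $K\times\{0\}$---so $s^{1}$ is a dominating global $\pi$-spray over $f(p,\cdot)$.

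For the parametric extension, I apply Lemma~\ref{lemma:local_spray} with a compact Hausdorff neighborhood $\overline{P'}$ of $p$ in $P$ as parameter space to obtain a dominating local parametric $\pi$-spray $s^{\mathrm{loc}}:\overline{P'}\times\Op K\times W''\to Y$ over $f|_{\overline{P'}\times\Op K}$. Lemma~\ref{lemma:right_inverse} gives a continuous fiber-preserving local inverse $\iota$ of $s^{\mathrm{loc}}$. Using a local holomorphic trivialization of $\pi$ along the compact $f(p,K)$ (existing by the submersion theorem and the compactness of $f(p,K)$) to continuously transport $s^{1}(z,w)$ from $\pi^{-1}(F_{p}(z))$ into $\pi^{-1}(F_{p'}(z))$ for $p'\in\Op\{p\}$, and composing with $s^{\mathrm{loc}}$ through $\iota$, I obtain a candidate parametric spray. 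A final correction via Lemma~\ref{lemma:right_inverse}, analogous to the first step of the proof of Theorem~\ref{theorem:approximation}, imposes the section condition $s(p',z,0)=f(p',z)$, producing the desired dominating global parametric $\pi$-spray $s:\Op\{p\}\times\Op K\times\C^{N}\to Y$ over $f|_{\Op\{p\}\times\Op K}$.

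The main obstacle is the single-parameter globalization. The key technical points are verifying that $h$ satisfies the hypothesis of Theorem~\ref{theorem:oka_principle_for_sections} over the Stein base $X\times\C^{N}$, and constructing the continuous extension of $s^{0}$ so that the resulting $\POPAIsec$ homotopy preserves both the interpolation on $X\times\{0\}$ and, most importantly, the submersivity of $s^{1}(x,\cdot)$ at $0$ for $x\in K$. The parametric extension is more intricate but reduces to combining the parametric local spray with $s^{1}$ via Lemma~\ref{lemma:right_inverse} and a local trivialization of $\pi$ to handle the variation of the target fiber with the parameter.
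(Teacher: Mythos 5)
Your first step (globalizing a dominating spray over the single map $f(p,\cdot)$ by pulling back $\pi$ to $X\times\C^{N}$ and invoking Theorem~\ref{theorem:oka_principle_for_sections} with interpolation on $X\times\{0\}$ and approximation near $K\times\overline{W'}$) is sound and close in spirit to what is needed. The genuine gap is in the parametric extension. You propose to carry the global spray $s^{1}(z,\cdot):\C^{N}\to\pi^{-1}(F_{p}(z))$ over to the nearby fibers $\pi^{-1}(F_{p'}(z))$ by means of ``a local holomorphic trivialization of $\pi$ along the compact $f(p,K)$.'' No such trivialization exists in general: a holomorphic submersion is a product only in a neighborhood of each \emph{point} of $Y$, and these local product charts neither glue over a neighborhood of $f(p,K)$ nor, more fatally, cover the image $s^{1}(z,\C^{N})$, which is a noncompact subset of the fiber. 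Any transport built from such charts is therefore defined only for $w$ in a bounded region, so the construction produces at best a \emph{local} spray in $w$ over $f(p',\cdot)$, not the required global one; and the subsequent correction through $\iota$ and $s^{\mathrm{loc}}$ is likewise confined to a neighborhood of the zero section. A secondary gap: $P$ is an arbitrary topological space, so $p$ need not admit a compact Hausdorff neighborhood $\overline{P'}$, and the parametric use of Lemma~\ref{lemma:local_spray} is not available.

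The way to repair this is to thicken the graph \emph{before} globalizing, which is exactly what the paper does. Let $\Gamma\subset U\times Y$ be the graph of $f(p,\cdot)$ on a Stein neighborhood $U$ of $K$; by Siu's theorem it has an open Stein neighborhood $V\subset U\times Y$. Apply your Step~1 not to $f(p,\cdot)$ but to the tautological map $\pr_{Y}|_{V}:V\to Y$, i.e.\ pull $\pi$ back along $\pi\circ\pr_{Y}|_{V}\circ\pr_{V}:V\times\C^{N}\to S$ and use Lemma~\ref{lemma:local_spray} plus Theorem~\ref{theorem:oka_principle_for_sections} to obtain a dominating global $\pi$-spray $s:V\times\C^{N}\to Y$ over $\pr_{Y}|_{V}$ (dominating along $\Gamma\cap(\Op K\times Y)$). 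Then no identification of nearby fibers is ever needed: by continuity of $f$, for $p'\in\Op\{p\}$ and $x\in\Op K$ the point $(x,f(p',x))$ lies in $V$, and
\begin{equation*}
(p',x,w)\longmapsto s\bigl((x,f(p',x)),w\bigr)
\end{equation*}
is already the desired dominating global $\pi$-spray over $f|_{\Op\{p\}\times\Op K}$; the parameter dependence is handled purely by composition, with no compactness or trivialization hypotheses on $P$ or on $\pi$.
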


\begin{proof}
Take an open Stein neighborhood $U\subset X$ of $K$ such that $f$ is defined on $P\times U$.
Let $\Gamma\subset U\times Y$ denote the graph of $f(p,\cdot):U\to Y$.
Since $\Gamma\cong U$ is Stein, there exists an open Stein neighborhood $V\subset U\times Y$ of $\Gamma$ by Siu's theorem \cite{Siu1976}.
Since $\pi$ is a stratified convexly elliptic submersion, the pullback submersion $(\pi\circ\pr_{Y}|_{V}\circ\pr_{V})^{*}\pi:(\pi\circ\pr_{Y}|_{V}\circ\pr_{V})^{*}Y\to V\times\C^{N}$ satisfies the assumption in Theorem \ref{theorem:oka_principle_for_sections} (see Remark \ref{remark:pullback}).
Note that holomorphic global sections of this submersion correspond to global $\pi$-sprays over $\pr_{Y}|_{V}:V\to Y$.
Thus after shrinking $V\supset\Gamma\cap(\Op K\times Y)$ if necessary, we can construct a dominating global $\pi$-spray $s:V\times\C^{N}\to Y$ over $\pr_{Y}|_{V}$ by Lemma \ref{lemma:local_spray} and Theorem \ref{theorem:oka_principle_for_sections} (in fact we do not need to shrink $V$).
Then
\begin{align*}
s\circ((\pr_{\Op K},f)\times\id_{\C^{N}}):(\Op\,\{p\}\times\Op K)\times\C^{N}\to Y
\end{align*}
is a dominating global $\pi$-spray over $f|_{\Op\,\{p\}\times\Op K}$.
\end{proof}

%
%

\section{Applications}
\label{section:applications}

In this section, we give various applications of our Oka principle.

%
%

\subsection{Equivalences between Oka properties}

It is a fundamental result in modern Oka theory that the following Oka properties of a complex manifold are equivalent:
$\CAP$, $\CIP$, $\BOPA$, $\BOPI$, $\BOPJI$, $\BOPAI$, $\BOPAJI$, $\PCAP$, $\PCIP$, $\POPA$, $\POPI$, $\POPJI$, $\POPAI$, $\POPAJI$ (see \cite[\S 5.15]{Forstneric2017} for the definitions).
As an application of Theorem \ref{theorem:characterization}, we can generalize this result to a holomorphic submersion.
To this end, we define $\CAP$ of a submersion as follows.

\begin{definition}
\label{definition:CAP}
A holomorphic submersion $\pi:Y\to S$ enjoys \emph{CAP} if for any bounded convex domain $\Omega$, any $N\in\N$, any compact convex set $K\subset\Omega\times\C^N$, any holomorphic map $F:\Omega\to S$ and any continuous map $f:\Omega\times\C^{N}\to Y$ such that $f|_{\Op K}$ is holomorphic and $\pi\circ f=F\circ\pr_{\Omega}$ there exists a holomorphic map $\tilde f:\Omega\times\C^{N}\to Y$ which approximates $f$ uniformly on $K$ and satisfies $\pi\circ\tilde f=F\circ\pr_{\Omega}$.
\end{definition}

\begin{remark}
\label{remark:CAP}
(1) A complex manifold $Y$ enjoys $\CAP$ if and only if the constant submersion $Y\to *$ enjoys $\CAP$.
\\
(2) For a holomorphic fiber bundle $\pi:Y\to S$ whose fiber enjoys $\CAP$ and its trivializing cover $\{U_{\alpha}\}_{\alpha}$, each restriction $\pi^{-1}(U_{\alpha})\to U_{\alpha}$ enjoys $\CAP$.
\end{remark}

In the same manner, we can appropriately define $\CIP$, $\PCAP$ and $\PCIP$ of a submersion.
Other Oka properties have straightforward generalizations to a submersion.
As in the case of a manifold (cf. \cite[\S5.15]{Forstneric2017}), $\CAP$ of a submersion is implied by each of other Oka properties and $\POPAJI$ implies others.
In order to prove the remaining implication $\CAP$ to $\POPAJI$, we first prove the implication from $\CAP$ to convex ellipticity.

\begin{proposition}\label{proposition:CAP=>CEll}
If a holomorphic submersion $\pi:Y\to S$ enjoys $\CAP$, then it is convexly elliptic.
\end{proposition}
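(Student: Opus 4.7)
The plan is to take the open cover in Definition \ref{definition:cell} to be the trivial cover $\{S\}$ and, given a compact convex set $K\subset\C^{n}$ and a holomorphic map $f:\Op K\to Y$, to produce a dominating global $\pi$-spray over $f$ by applying $\CAP$ to a continuous extension of a local dominating spray.

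First I would invoke Lemma \ref{lemma:local_spray} with $P$ a point to obtain a dominating local $\pi$-spray $s_{0}:\Omega\times W\to Y$ over $f|_{\Omega}$, where $\Omega\subset\C^{n}$ is a bounded convex open neighborhood of $K$ on which $f$ and $F:=\pi\circ f$ are defined, and $W\subset\C^{N}$ is a convex open neighborhood of $0$.

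Next I would extend $s_{0}$ to a continuous spray on all of $\Omega\times\C^{N}$. Fix a smaller convex open neighborhood $W'\Subset W$ of $0$ and a continuous radial retraction $\sigma:\C^{N}\to W$ which restricts to the identity on $W'$. Then $\tilde s_{0}(z,w):=s_{0}(z,\sigma(w))$ defines a continuous map $\Omega\times\C^{N}\to Y$ which is holomorphic on an open neighborhood of the compact convex set $K\times\overline{W'}\subset\Omega\times\C^{N}$ and still satisfies $\pi\circ\tilde s_{0}=F\circ\pr_{\Omega}$, because the reparametrization acts only in the spray direction. Applying $\CAP$ to $\tilde s_{0}$, $F$ and $K\times\overline{W'}$ then furnishes a holomorphic map $\tilde f:\Omega\times\C^{N}\to Y$ with $\pi\circ\tilde f=F\circ\pr_{\Omega}$ approximating $\tilde s_{0}$ uniformly on $K\times\overline{W'}$ as closely as desired.

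Finally I would correct $\tilde f$ into a genuine dominating spray over $f$. By Cauchy estimates on $K\times\overline{W'}$, a sufficiently close approximation forces $\partial_{w}\tilde f(z,0)$ to remain surjective for $z\in K$, so $\tilde f$ is itself a dominating $\pi$-spray over its own zero section $\tilde f(\cdot,0):\Op K\to Y$. Since $\tilde f(\cdot,0)$ is uniformly close to $f$ on $K$, Lemma \ref{lemma:right_inverse} applied to $\tilde f$ yields a holomorphic map $\varphi:\Op K\to\C^{N}$ with $\varphi\approx 0$ and $\tilde f(z,\varphi(z))=f(z)$. The translate $s(z,w):=\tilde f(z,w+\varphi(z))$ is then the desired dominating global $\pi$-spray $\Op K\times\C^{N}\to Y$ over $f$. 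The only slightly delicate step is the construction of the cutoff $\sigma$, but this is routine because the fiber condition $\pi\circ s_{0}=\pi\circ f$ is automatically preserved by any reparametrization of the spray parameter; the rest of the argument is a direct combination of $\CAP$ with the two preparatory lemmas, and I do not foresee any real obstacle.
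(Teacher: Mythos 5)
Your argument is correct and follows essentially the same route as the paper: produce a dominating local $\pi$-spray via Lemma \ref{lemma:local_spray}, globalize it by applying $\CAP$ to a continuous extension, and then reparametrize by a small holomorphic right inverse so the spray passes through $f$ at $w=0$ (the paper invokes Rouch\'e's theorem where you invoke Lemma \ref{lemma:right_inverse}, but on the compact set $K$ these give the same uniform correction). The only cosmetic point is that your $\tilde s_{0}$ is holomorphic on $\Omega\times W'$ rather than on a neighborhood of $K\times\overline{W'}$, so you should either take $\sigma$ to be the identity on a neighborhood of $\overline{W'}$ or approximate on $K\times\overline{W''}$ for some $W''\Subset W'$.
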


\begin{proof}
Let $K\subset\C^{n}$ be a compact convex set and $f:\Op K\to Y$ be a holomorphic map.
By Lemma \ref{lemma:local_spray}, there exists a dominating local $\pi$-spray $s':\Op K\times\B^{N}\to Y$ over $f$.
Since $\pi$ enjoys $\CAP$, there exists a dominating global $\pi$-spray $s:\Op K\times\C^{N}\to Y$ which is close to $s'$ on $\Op K\times\B^{N}$.
Then Rouch\'e's theorem (cf. \cite[p.\,110]{Chirka1989}) implies the existence of a holomorphic map $\varphi:\Op K\to\C^{N}$ which is close to $0$ such that $s\circ(\id_{\Op K},\varphi)=f$.
Then $s\circ(\pr_{\Op K},\pr_{\C^{N}}+\varphi\circ\pr_{\Op K}):\Op K\times\C^{N}\to Y$ is a dominating global $\pi$-spray over $f$.
\end{proof}

We need to observe the following to obtain the implication from convex ellipticity to $\POPAJI$.

\begin{remark}
\label{remark:jet}
Since the proof of our Oka principle depends on the implication from $\HAP$ to $\POPAJI$ (cf. \cite[Theorem 6.6.6]{Forstneric2017}), we can include the jet interpolation property into our Oka principle.
That is, every stratified convexly elliptic submersion enjoys $\POPAJI$.
More generally, an Oka principle which relies on $\HAP$ (e.g. the Oka principle for sections of branched maps \cite[Theorem 6.14.4]{Forstneric2017}) holds under the appropriate assumption which imposes convex ellipticity.
\end{remark}

By Proposition \ref{proposition:CAP=>CEll} and Remark \ref{remark:jet}, we obtain the following equivalences.

\begin{corollary}
\label{corollary:equivalence}
The following Oka properties of a holomorphic submersion are equivalent:
$\CAP$, $\CIP$, $\BOPA$, $\BOPI$, $\BOPJI$, $\BOPAI$, $\BOPAJI$, $\PCAP$, $\PCIP$, $\POPA$, $\POPI$, $\POPJI$, $\POPAI$, $\POPAJI$.
\end{corollary}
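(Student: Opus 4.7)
The plan is to reduce everything to a single nontrivial cycle that funnels all the properties through convex ellipticity. The fourteen properties sit on a directed graph in which $\POPAJI$ is the top element (it formally implies every weaker variant obtained by dropping the parametric, approximation, interpolation, or jet interpolation conditions) and $\CAP$ is the bottom element (it is implied by every stronger variant upon specializing the Stein base to a bounded convex domain in $\C^{n}$ and the compact $\cO(X)$-convex set to a compact convex subset). These structural implications are the submersion analogues of the manifold case \cite[\S 5.15]{Forstneric2017} and follow directly from the definitions; $\CIP$, $\PCAP$, and $\PCIP$ fit into the same diagram by the same reasoning. So the whole corollary reduces to the single upward implication $\CAP \Rightarrow \POPAJI$.

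For this implication I would chain the three ingredients already in place: Proposition \ref{proposition:CAP=>CEll} converts $\CAP$ into convex ellipticity; Theorem \ref{theorem:characterization} converts convex ellipticity into $\POPAI$; and Remark \ref{remark:jet} upgrades $\POPAI$ to $\POPAJI$. Concretely, the proof of Theorem \ref{theorem:oka_principle_for_sections} runs through the $\HAP$-based axiomatization of Theorem \ref{theorem:HAP=>POP}, and that machinery (as used in \cite[Theorem 6.6.6]{Forstneric2017}) is known to deliver the full parametric Oka principle with jet interpolation along a closed complex subvariety $X' \subset X$. Pulling this jet-interpolation-enhanced Oka principle for sections back through the reduction in Lemma \ref{lemma:parametric_spray} then produces $\POPAJI$ for lifts, closing the cycle.

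The only real subtlety, and what I expect to be the main obstacle, is verifying that the jet interpolation condition is preserved through the Cartan pair induction of Section \ref{subsection:approximation}. The standard device is to fix a holomorphic representative of the prescribed jet along $X'$ and subtract it from $f_{0}$, reducing to the case where the prescribed jet is zero; then every approximation supplied by the Cartan--Oka--Weil theorem with parameters and every amalgamation carried out via Forstneri\v{c}'s Heftungslemma can be arranged to stay in a sufficiently high power of the ideal sheaf of $X'$, so the jet vanishing condition is preserved automatically. With this bookkeeping in place, the arguments of Sections \ref{section:sections} and \ref{section:lifts} go through verbatim with jet interpolation added to the conclusion, and Corollary \ref{corollary:equivalence} follows.
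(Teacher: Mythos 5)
Your proposal is correct and follows essentially the same route as the paper: the structural implications reduce everything to $\CAP\Rightarrow\POPAJI$, which the paper obtains by chaining Proposition \ref{proposition:CAP=>CEll} with Remark \ref{remark:jet} (convex ellipticity yields $\POPAJI$ because the whole argument runs through the $\HAP$-to-$\POPAJI$ reduction of Forstneri\v{c}). The only difference is one of emphasis: the jet-interpolation bookkeeping you flag as the main obstacle lives entirely inside the cited reduction \cite[Theorem 6.6.6]{Forstneric2017} (Theorem \ref{theorem:HAP=>POP} here), not in the Cartan pair induction of Section \ref{subsection:approximation}, which only has to establish $\HAPsec$ --- a statement with no jet condition --- so no extra verification is needed at that stage.
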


As an immediate application of Corollary \ref{corollary:equivalence}, we can obtain the following invariance of the parametric Oka property (see also \cite{Forstneric2010}).
In fact, it follows also from the invariance of convex ellipticity and Theorem \ref{theorem:characterization}.

\begin{corollary}[{cf. \cite[Corollary 2.51]{Forstneric2013}}]
\label{corollary:fibration}
Let $\pi_{1}:Y_{1}\to S$, $\pi_{2}:Y_{2}\to S$ and $\pi:Y_{1}\to Y_{2}$ be holomorphic submersions and assume that $\pi$ is a surjective Oka map such that $\pi_{2}\circ\pi=\pi_{1}$.
Then $\pi_{1}$ enjoys $\POPAI$ if and only if $\pi_{2}$ enjoys $\POPAI$.
\end{corollary}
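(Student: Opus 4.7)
The plan is to use Theorem \ref{theorem:characterization} to translate the claim into the statement that convex ellipticity is invariant: under the hypotheses, $\pi_1$ is convexly elliptic if and only if $\pi_2$ is. In both directions I would use the same open cover $\{U_{\alpha}\}_{\alpha}$ of $S$, noting that $\pi_1^{-1}(U)=\pi^{-1}(\pi_2^{-1}(U))$ for every open $U\subset S$.

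The easy direction is ``$\pi_1$ convexly elliptic $\Rightarrow$ $\pi_2$ convexly elliptic''. Given a compact convex $K\subset\C^{n}$ and a holomorphic $g:\Op K\to Y_2$ with $g(K)\subset\pi_2^{-1}(U_{\alpha})$, I would first construct a holomorphic $\pi$-lift $\tilde g:\Op K\to Y_1$: a continuous lift exists because $\pi$ is a surjective topological fibration and $\Op K$ can be shrunk to a contractible Stein neighborhood, and $\POPAI$ of the Oka map $\pi$ upgrades the continuous lift to a holomorphic one. Applying convex ellipticity of $\pi_1$ to $\tilde g$ (which satisfies $\tilde g(K)\subset\pi_1^{-1}(U_{\alpha})$) yields a dominating global $\pi_1$-spray $s_1:\Op K\times\C^{N}\to Y_1$, and then $s_2:=\pi\circ s_1$ is a global $\pi_2$-spray over $g$. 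It remains dominating because the restriction of $\pi$ to each $\pi_1$-fiber $\pi_1^{-1}(c)$ is a holomorphic submersion onto $\pi_2^{-1}(c)$, so its differential sends $T_{\tilde g(z)}\pi_1^{-1}$ surjectively onto $T_{g(z)}\pi_2^{-1}$.

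For the converse direction, given $f:\Op K\to Y_1$ with $f(K)\subset\pi_1^{-1}(U_{\alpha})$, I would build a dominating $\pi_1$-spray over $f$ in two stages reflecting the exact sequence $0\to T\pi^{-1}\to T\pi_1^{-1}\to T\pi_2^{-1}\to 0$. Stage one applies convex ellipticity of $\pi_2$ to $\pi\circ f$ to produce a dominating global $\pi_2$-spray $s_2:\Op K\times\C^{N_2}\to Y_2$, then $\POPAI$ of $\pi$ on the Stein manifold $X:=\Op K\times\C^{N_2}$ with interpolation along the closed subvariety $X':=\Op K\times\{0\}$ (where the lift must equal $f\circ\pr_{\Op K}$) to obtain a holomorphic $\tilde s:X\to Y_1$ with $\pi\circ\tilde s=s_2$ and $\tilde s|_{X'}=f\circ\pr_{\Op K}$; the required continuous extension of the initial lift to all of $X$ is obtained by lifting the homotopy $s_2\circ r_t$ through the fibration $\pi$, where $r_t(z,w)=(z,(1-t)w)$ deformation-retracts $X$ onto $X'$. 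Stage two uses Lemma \ref{lemma:local_spray} to choose a dominating local $\pi$-spray $\Sigma^{\mathrm{loc}}$ over $\tilde s|_{\Op(K\times\{0\})}$ of some rank $N_3$, and then invokes the approximation-plus-interpolation strength of $\pi$ (available by Corollary \ref{corollary:equivalence}) on $X\times\C^{N_3}$, with interpolation along $X\times\{0\}$ (where $\Sigma$ is required to equal $\tilde s$) and approximation of $\Sigma^{\mathrm{loc}}$ on a neighborhood of $K\times\{0\}\times\overline{W'}$ for a small $W'\subset\C^{N_3}$, yielding $\Sigma:X\times\C^{N_3}\to Y_1$. Cauchy estimates promote the $C^{0}$-closeness to $C^{1}$-closeness near $K\times\{0\}\times\{0\}$, so $\Sigma$ inherits the $\pi$-domination of $\Sigma^{\mathrm{loc}}$ at each $(z,0,0)$ with $z\in K$; combined with the $\pi_2$-direction supplied by $\tilde s$, $\Sigma$ is a dominating global $\pi_1$-spray over $f$ of rank $N_2+N_3$.

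The main obstacle will be this second stage: one must coordinate a continuous initial lift that simultaneously interpolates $\tilde s$ along $X\times\{0\}$ and agrees with the dominating local spray $\Sigma^{\mathrm{loc}}$ on a neighborhood of $K\times\{0\}\times\overline{W'}$, and then extract from $\pi$ a holomorphic analogue preserving both constraints. This is exactly what the Oka-map hypothesis on $\pi$ supplies via Corollary \ref{corollary:equivalence}. The remaining steps are routine: the differential computation that the combined spray surjects onto $T\pi_1^{-1}$ via the exact sequence above, and the fact that covers of $S$ transport correctly through the identity $\pi_1^{-1}(U)=\pi^{-1}(\pi_2^{-1}(U))$.
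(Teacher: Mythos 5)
Your argument is correct and follows exactly the alternative route the paper itself indicates for this corollary --- deducing it from the invariance of convex ellipticity together with Theorem \ref{theorem:characterization} --- and your two-stage construction of the dominating $\pi_{1}$-spray (a dominating $\pi_{2}$-spray lifted through $\pi$ with interpolation along $\Op K\times\{0\}$, then corrected in the $\pi$-fiber directions by approximating a dominating local $\pi$-spray) supplies the details the paper leaves implicit, while the paper's primary route is even shorter, quoting Corollary \ref{corollary:equivalence} to reduce to a basic Oka property whose invariance is classical. The one point to tidy is in your first stage: lifting the homotopy $s_{2}\circ r_{t}$ through the fibration yields a lift that a priori agrees with $f\circ\pr_{\Op K}$ on $\Op K\times\{0\}$ only up to a vertical homotopy, so you should invoke the stationary (relative) homotopy lifting property for the cofibration $X'\hookrightarrow X$, or extend the section $f\circ\pr_{\Op K}$ of the pullback fibration directly using that $X'$ is a deformation retract of $X$; after that the interpolation clause of $\POPAI$ applies exactly as you say.
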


%
%

\subsection{Refinements of Oka principles}

It has been unknown whether we may take non-Euclidean parameter spaces in the following previously known Oka principles;
\begin{enumerate}
\item the Oka principle for holomorphic fiber bundles with $\CAP$ fibers (cf. \cite[Theorem 5.4.4]{Forstneric2017}),
\item the Oka principle for $\BOPAI$ submersions (cf. \cite[Theorem 7.4.3]{Forstneric2017}), and
\item the Oka principles for stratified subelliptic submersions and stratified fiber bundles with $\CAP$ fibers (cf. \cite[Corollary 7.4.5]{Forstneric2017}; see also \cite[Remark 4.6]{Forstneric2010}).
\end{enumerate}
Our Oka principle for stratified convexly elliptic submersions (Theorem \ref{theorem:oka_principle_for_lifts}) unifies these Oka principles and hence solves the above problem.

\begin{corollary}
\label{corollary:parameter}
In any of the above situations, the holomorphic submersion enjoys $\POPAI$ (without any restriction on compact Hausdorff parameter spaces)\footnote{In fact, it follows that we can also drop the Hausdorff assumption (see \cite[p.\,1160]{Larusson2015}).}.
\end{corollary}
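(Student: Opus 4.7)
The plan is to deduce Corollary \ref{corollary:parameter} directly from Theorem \ref{theorem:oka_principle_for_lifts}, which already establishes $\POPAI$ for every stratified convexly elliptic submersion with no restriction whatsoever on the compact Hausdorff parameter spaces $P_{0}\subset P$. So the whole task reduces to verifying that each of the three classes of holomorphic submersions listed in (1)--(3) is stratified convexly elliptic.

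First I would handle (1). Let $\pi:Y\to S$ be a holomorphic fiber bundle whose fiber enjoys $\CAP$, and let $\{U_{\alpha}\}_{\alpha}$ be a trivializing cover of $S$. By Remark \ref{remark:CAP}(2), each restriction $\pi^{-1}(U_{\alpha})\to U_{\alpha}$ enjoys $\CAP$ as a submersion, and then by Proposition \ref{proposition:CAP=>CEll} each such restriction is convexly elliptic. Using $\{U_{\alpha}\}_{\alpha}$ as the open cover required in Definition \ref{definition:cell} shows that $\pi$ itself is convexly elliptic, hence stratified convexly elliptic with the trivial stratification $S\supset\emptyset$.

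For (2), I would invoke the straightforward implication $\BOPAI\Rightarrow\CAP$ for submersions (obtained by specialization of the defining maps in the same way as in the manifold case, as recalled in the paragraph preceding Corollary \ref{corollary:equivalence}) and then apply Proposition \ref{proposition:CAP=>CEll} to conclude convex ellipticity. For (3), the stratified situation reduces to (1) together with the fact that subelliptic submersions are convexly elliptic, which is the observation made right after Definition \ref{definition:cell}: applying these fiberwise on each stratum of the given stratification shows that stratified subelliptic submersions and stratified holomorphic fiber bundles with $\CAP$ fibers are both stratified convexly elliptic.

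Once this triple verification is in place, a single application of Theorem \ref{theorem:oka_principle_for_lifts} yields $\POPAI$ with arbitrary compact Hausdorff parameter spaces in all three cases simultaneously. I do not expect a real obstacle here; the hard analytic work (the stepwise extension procedure and the approximation theorem of Section \ref{section:sections}) has already been absorbed into Theorem \ref{theorem:oka_principle_for_lifts}, and what remains is the bookkeeping step of matching each of the classical hypotheses to the convex ellipticity framework.
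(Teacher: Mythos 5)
Your proposal is correct and follows essentially the same route as the paper, which deduces the corollary by observing that each of the three situations yields a stratified convexly elliptic submersion (via Remark \ref{remark:CAP} and Proposition \ref{proposition:CAP=>CEll} for the $\CAP$ and $\BOPAI$ cases, and the remark after Definition \ref{definition:cell} for the subelliptic case) and then applying Theorem \ref{theorem:oka_principle_for_lifts}. The bookkeeping you supply for matching each classical hypothesis to convex ellipticity is exactly what the paper leaves implicit.
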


Next, we prove the following dimensionwise Oka principle which is asked by Forstneri\v{c} (private communication).
For a fixed natural number $n\in\N$, a complex manifold enjoys $\POPAI_{n}$ if it enjoys $\POPAI$ where the dimension of a Stein space $X$ (the domain of maps in a family) is restricted as $\dim X\leq n$.

\begin{corollary}
\label{corollary:dimensionwise}
Let $Y$ be a complex manifold and $n\in\N$ be a fixed natural number.
Assume that for any compact convex set $K\subset\C^{n}$ and any holomorphic map $f:\Op K\to Y$ there exists a dominating global spray (with respect to the constant submersion) $\Op K\times\C^{N}\to Y$ over $f$.
Then $Y$ enjoys $\POPAI_{n}$.
In particular, if for any holomorphic disc $f:\D\to Y$ there exists a dominating global spray $\D\times\C^{N}\to Y$ over $f$, then $Y$ enjoys $\POPAI_{1}$ (i.e. the parametric Oka principle for maps from open Riemann surfaces holds).
\end{corollary}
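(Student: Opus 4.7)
The plan is to follow the proof of Theorem \ref{theorem:oka_principle_for_sections}, applied to the trivial submersion $h=\pr_X\colon X\times Y\to X$, while restricting throughout to Stein spaces with $\dim X\leq n$ and carefully tracking the dimensions at which convex ellipticity of $Y$ is actually invoked.

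The first step is to upgrade the hypothesis from compact convex subsets of $\C^n$ to compact convex subsets of $\C^m$ for every $m\leq n$. Given such a $K\subset\C^m$ and a holomorphic $f\colon\Op K\to Y$, extend $f$ through the projection $\C^n=\C^m\times\C^{n-m}\to\C^m$ to a holomorphic map on a neighborhood of $K\times\{0\}\subset\C^n$, apply the hypothesis to obtain a dominating global spray $\tilde s$ over this extension, and restrict to the slice $\{0\}\subset\C^{n-m}$ via $(z,w)\mapsto\tilde s(z,0,w)$ to obtain a dominating global spray over $f$.

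The heart of the argument is to run the proof of Theorem \ref{theorem:approximation}, together with its reductions through Proposition \ref{proposition:reduction'}, Proposition \ref{proposition:reduction} and Theorem \ref{theorem:oka_principle_for_sections}, for $h=\pr_X$ with $\dim X\leq n$, stratified by closed complex subvarieties refined to include $X'$. Every invocation of convex ellipticity of $Y$ in that proof produces a dominating global spray over a single holomorphic map $\Op(A\cap B)\to Y$ for a very special Cartan pair $(A,B)$ sitting in a smooth stratum of $X$; since that stratum has dimension at most $n$, the compact convex set $A\cap B$ lies in $\C^m$ with $m\leq n$, and the extended hypothesis from the first step supplies the required spray. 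The shifting construction of Lemma \ref{lemma:right_inverse}, carried out verbatim at the start of the proof of Theorem \ref{theorem:approximation}, then promotes these single-point dominating sprays to parametric dominating sprays $s^{j,k}$ over compact subsets of $Q=P\times[0,1]$, requiring no further convex ellipticity of $Y$.

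The main obstacle is the apparent mismatch between the dimension-$n$ hypothesis and Theorem \ref{theorem:HAP=>POP}, which reduces $\POPAIsec$ of $h$ to $\HAPsec$ of the enlarged submersion $h\times\id_{\B^{N}}$ whose base has dimension $n+N$. One must verify that the additional $\B^N$-factor lives only in the spray-parameter direction and does not require convex ellipticity of $Y$ in any dimension beyond $n$: both the Cartan--Oka--Weil approximation deforming the continuous $\C^N$-valued map $\tilde s'_{A'}$ into a holomorphic $\tilde s_{A'}$, and the fiber-preserving approximation inside Forstneri\v{c}'s Heftungslemma used to amalgamate $s_{A'}$ with $s_{B'}$ into $s_B$, are approximations of $\C^L$-valued holomorphic maps on Stein manifolds of the form $\Op A'\times W$, handled by the ordinary Cartan--Oka--Weil theorem with no Oka hypothesis on $Y$. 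Combining these observations with Forstneri\v{c}'s inductive stepwise extension along an exhaustion of $X$ by strongly pseudoconvex compacts (via very special Cartan pairs, Lemma \ref{lemma:very_special}) produces $\POPAIsec$ of $\pr_X$ over every Stein $X$ with $\dim X\leq n$, which is the asserted $\POPAI_{n}$ for $Y$. The final sentence of the corollary is the case $n=1$: since every compact convex subset of $\C$ is affinely equivalent to a compact subset of $\D$, the disc hypothesis implies the dimension-one hypothesis by restriction and reparametrization.
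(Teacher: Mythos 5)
You have correctly located the danger point, but your resolution of it does not work, and the missing step is precisely the technical heart of the paper's argument. Once you pass through Theorem \ref{theorem:HAP=>POP}, the submersion being treated is $h\times\id_{\B^{N}}$ over the base $U_{\alpha}\times\B^{N}$, and the very special Cartan pairs $(A,B)$ of the induction live in that $(n+N)$-dimensional base, not in a stratum of $X$. The hypothesis needed to invoke Theorem \ref{theorem:approximation} at that level is a dominating \emph{global} spray over the section $f_{0}(q,\cdot)$ on a neighborhood of $A\cap B\subset\C^{n+N}$: this is what allows the gluing problem to be linearized into a $\C^{L}$-valued one at all. Your observation that the subsequent Cartan--Oka--Weil and Heftungslemma steps only approximate $\C^{L}$-valued maps is correct but beside the point, since those steps presuppose that global spray over the \emph{thickened} section, and the dimension-$n$ hypothesis does not directly provide it. Asserting that ``the $\B^{N}$-factor lives only in the spray-parameter direction'' is the claim to be proved, not a proof.

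The paper closes this gap as follows, and this is where the $\delta$ in its auxiliary lemma comes from. One restricts to product Cartan pairs $\bigl(A\times\delta\overline{\B^{N}},\,B\times\delta\overline{\B^{N}}\bigr)$ with $(A,B)$ a very special Cartan pair in $X$, $\dim X\leq n$ (the induction of \cite[\S 5.7--5.13]{Forstneric2017} only requires these). The hypothesis yields a dominating global spray over the central slice $f_{0}(q,\cdot,0):\Op(A\cap B)\to Y$, since $A\cap B$ is convex in $\C^{\dim X}$; one then applies the right-inverse construction of Lemma \ref{lemma:right_inverse} \emph{in the $\B^{N}$-direction} --- exactly the shift you propose only in the $Q$-direction --- to promote it to a dominating global spray over $f_{0}(q,\cdot,\cdot)|_{\Op(A\cap B)\times\delta^{1/2}\B^{N}}$. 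The price is an unavoidable shrinking of the ball by a factor $\delta^{1/2}<1$, uniform over $Q$ by compactness; the resulting homotopy from Theorem \ref{theorem:approximation} is then only defined over $\delta\B^{N}$, and one must check that the induction of \cite[\S 5.11]{Forstneric2017} tolerates retaking the spray ball at each step. Your proposal contains neither the shift in the $\B^{N}$-direction nor the resulting shrinkage, so as written it does not establish the corollary. (A minor separate point: in the case $n=1$, an affine map shrinking $K$ into $\D$ does not turn $f:\Op K\to Y$ into a holomorphic disc, since $f$ need not extend to a round disc containing $K$; instead use that $K$ has a bounded convex open neighborhood inside $\Op K$, which is biholomorphic to $\D$ by the Riemann mapping theorem.)
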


The above corollary follows from the proof of the Forstneri\v{c} type Oka principle \cite[\S 5.7--5.13]{Forstneric2017} and the following approximation theorem of a local spray.

\begin{lemma}
Let $Y$ be a complex manifold, $n\in\N$ be a fixed natural number and $d$ be a distance function of $Y$.
Assume that for any compact convex set $K\subset\C^{n}$ and any holomorphic map $f:\Op K\to Y$ there exists a dominating global spray $\Op K\times\C^{N}\to Y$ over $f$.
Then for any complex manifold $X$ with $\dim X\leq n$, any very special Cartan pair $(A,B)$ in $X$, any compact Hausdorff spaces $P_{0}\subset P$ and any family of continuous map $f_{0}:Q\times\Op(A\cup B)\times\B^{N}\to Y$ where $Q=P\times[0,1],\ Q_{0}=(P\times\{0\})\cup(P_{0}\times[0,1])$ such that $f_{0}|_{Q_{0}\times\Op(A\cup B)\times\B^{N}}$ and $f_{0}|_{Q\times\Op A\times\B^{N}}$ are families of holomorphic maps, there exist a number $0<\delta<1$ such that for any $\varepsilon>0$ there exists a homotopy $f_{t}:Q\times\Op(A\cup B)\times\delta\B^{N}\to Y,\ t\in[0,1]$ such that the following hold for each $t\in[0,1]$;
	\begin{enumerate}
	\item $f_{t}=f_{0}$ on $Q_{0}\times\Op(A\cup B)\times\delta\B^{N}$,
	\item $f_{t}|_{Q\times\Op A\times\delta\B^{N}}$ is a family of holomorphic maps with $\sup_{Q\times A\times\delta\B^{N}}d(f_{t},f_{0})<\varepsilon$, and
	\item $f_{1}:Q\times\Op(A\cup B)\times\delta\B^{N}\to Y$ is a family of holomorphic maps.
	\end{enumerate}
\end{lemma}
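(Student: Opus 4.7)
The plan is to adapt the proof of Theorem~\ref{theorem:approximation} by treating $w \in \delta\B^N$ as an additional complex parameter alongside the Stein variable $x \in \Op(A\cup B)$ and the continuous parameter $q \in Q$. I would fix any $\delta \in (0,1)$ so that $\overline{\delta\B^N}$ is compact; this compactness is what drives the finite covering argument below and accounts for the restriction from $\B^N$ to $\delta\B^N$ in the conclusion. Once $\delta$ is chosen, the prescribed accuracy $\varepsilon>0$ is obtained by tightening the Cartan--Oka--Weil approximation step.

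The crucial first step is to construct parametric dominating global sprays around each point $(q_0, w_0) \in Q \times \overline{\delta\B^N}$. After shrinking $\Op(A\cap B)$ into $\Op A$, the map $f_0$ is jointly holomorphic in $(x,w) \in \Op(A\cap B) \times \B^N$ for every $q \in Q$. Via the very special coordinate chart on $\Op B$, the set $A\cap B$ identifies with a compact convex subset of $\C^{\dim X} \subset \C^n$, so the convex ellipticity hypothesis applied to the slice $f_0(q_0, \cdot, w_0)\colon \Op(A\cap B)\to Y$ yields a dominating global spray $s'\colon\Op(A\cap B)\times\C^L \to Y$. Applying Lemma~\ref{lemma:right_inverse} to $s'$ together with the nearby family $f_0(q,\cdot,w)$ produces a map $\varphi(q,w,x)$, defined on a neighborhood $U \ni (q_0,w_0)$ and continuous in $q$ while holomorphic in $(w,x)$ (the holomorphicity in $w$ is inherited because $f_0$ is holomorphic in $(x,w)$ on $\Op A \times \B^N$), satisfying $s'(x, \varphi(q,w,x)) = f_0(q,x,w)$ and $\varphi(q_0,w_0,\cdot)\equiv 0$. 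The composite $(q,w,x,z) \mapsto s'(x,\, z + \varphi(q,w,x))$ is then a parametric dominating global spray over $f_0|_{U\times\Op(A\cap B)}$.

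By compactness of $Q\times\overline{\delta\B^N}$, finitely many such parametric sprays $s^{j,k}$ cover the extended parameter space, and the stepwise extension method from the proof of Theorem~\ref{theorem:approximation} runs verbatim with $(q,w)$ in place of $q$. At each induction step one constructs $\C^L$-valued spray-coordinate maps $\tilde s_A, \tilde s_{B'}$ together with an amalgamation $\tilde s'_{A'}$ (holomorphic in $(x,w,z)$, continuous in $q$); applies the parametric Cartan--Oka--Weil theorem on the Stein product $\Op A' \times V \times W$, where $V$ is a small convex neighborhood of $w_0$ in $\delta\B^N$ --- valid because $(A\cap B)\cup(A'\cap B')$ is $\cO(A')$-convex by Lemma~\ref{lemma:cartan_prime} and $V, W$ are convex --- to obtain a holomorphic $\tilde s_{A'}$; composes with $s^{j,k}$ to produce $s_{A'}$; and amalgamates via Forstneri\v{c}'s Heftungslemma first on $(A',B')$ and then on $(A,B)$, yielding $s_{A\cup B}\colon U^{j,k}\times\Op(A\cup B)\times r'W \to Y$. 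Cut-off functions in $(q,w)$ glue the local deformations into a global homotopy, and the evaluation $f_1(q,x,w) := s_{A\cup B}(q,w,x,0)$ is the desired family of holomorphic maps.

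The main obstacle is keeping track of two different spray parameters: the input $\delta\B^N$, handled as a \emph{complex} variable throughout (restricted from $\B^N$ for compactness), and the auxiliary $W \subset \C^L$ introduced by Lemma~\ref{lemma:local_spray} and the Heftungslemma (shrunk finitely many times but always retaining a neighborhood of $0$ so that evaluation at $z=0$ makes sense). The dimension bound $\dim X \leq n$ enters \emph{only} at the single step where the convex ellipticity hypothesis is invoked to produce $s'$; every subsequent manipulation is dimension-independent, which is why the approximation transfers cleanly to the desired $\POPAI_n$ conclusion.
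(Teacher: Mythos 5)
Your construction of local parametric dominating sprays is fine, but the way you reassemble them breaks down. You propose to cover $Q\times\overline{\delta\B^{N}}$ by finitely many pieces and to run the stepwise extension method ``verbatim with $(q,w)$ in place of $q$'', gluing with cut-off functions in $(q,w)$. The stepwise extension method tolerates cut-offs only in the \emph{topological} parameter: each local deformation is spread to the rest of the parameter space by reparametrizing the homotopy time, $f_{\chi'(q)t}$, which is harmless because nothing has to depend holomorphically on $q$. If $w$ is treated the same way, then on the transition regions of the $w$-cut-offs the final map $f_{1}(q,\cdot,w)$ is an intermediate stage of the homotopy whose time parameter depends on $w$ only continuously, so $f_{1}$ fails to be holomorphic in $w$ there. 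Since the $\B^{N}$ factor is a spray variable, conclusions (2) and (3) require joint holomorphy in $(x,w)$ — that is the whole point of the lemma — and your argument does not deliver it.

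The paper avoids the problem by never subdividing in $w$: it absorbs the ball into the Cartan pair. One checks that $\bigl(A\times\delta\overline{\B^{N}},B\times\delta\overline{\B^{N}}\bigr)$ is again a very special Cartan pair in $X\times\C^{N}$, and — this is where $\delta$ is actually determined, so your claim that any $\delta\in(0,1)$ works is not right — one uses Lemma \ref{lemma:right_inverse} and the compactness of $Q$ to promote, for $\delta$ small enough, the dominating global spray over the central slice $f_{0}(q,\cdot,0)$ to a single dominating global spray over $f_{0}(q,\cdot,\cdot)|_{\Op(A\cap B)\times\delta^{1/2}\B^{N}}$, jointly holomorphic in all complex variables. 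Then Theorem \ref{theorem:approximation} applies verbatim to this product pair, with $Q$ alone as the parameter space, and no new induction or gluing in $w$ is needed. Your remark that the hypothesis on convex sets in $\C^{n}$ is used only once, on a convex model of $A\cap B$ in $\C^{\dim X}\subset\C^{n}$, is correct and matches the paper; the rest of your outline should be replaced by the reduction above.
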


\begin{proof}
Let $f_{0}:Q\times\Op(A\cup B)\times\B^{N}\to Y$ be a family of continuous map as above.
By assumption, for each $q\in Q$ there exists a dominating global spray $\Op(A\cap B)\times\C^{L}\to Y$ over $f(q,\cdot,0):\Op(A\cap B)\to Y$.
In the same way as in the proof of \ref{theorem:approximation}, by the compactness of $Q$, there exists a number $0<\delta<1$ such that for each $q\in Q$ there exists a global dominating spray $\Op(A\cap B)\times\delta^{1/2}\B^{N}\times\C^L\to Y$ over $f_0(q,\cdot,\cdot)|_{\Op(A\cap B)\times\delta^{1/2}\B^{N}}$.
Then Theorem \ref{theorem:approximation} for the very special Cartan pair $(A\times\delta\overline{\B^{N}},B\times\delta\overline{\B^{N}})$ implies the conclusion.
\end{proof}

%
%

\subsection{A holomorphic submersion which enjoys POPAI but is neither subelliptic nor locally trivial at any base point}

\begin{proposition}
\label{proposition:example}
Let $\{a_j\}_{j\in\N}$ be a dense sequence in $\C$ and set
\begin{align*}
\Sigma=\{(a_j,j,0)\}_{j\in\N}\times\left(\overline{\N^{-1}}\right)^2\subset\C^5
\end{align*}
where $\N^{-1}=\{j^{-1}:j\in\N\}$.
Then the submersion $\pi:\C^5\setminus\Sigma\to\C,\ (z_{1},\ldots,z_{5})\mapsto z_{1}$ enjoys $\POPAI$ but for any nonempty open set $U\subset\C$ the restriction $\pi^{-1}(U)\to U$ is neither trivial nor subelliptic.
\end{proposition}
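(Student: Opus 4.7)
The plan is to treat the three assertions of the proposition separately---$\POPAI$ of $\pi$, non-triviality of $\pi^{-1}(U)\to U$, and non-subellipticity of $\pi^{-1}(U)\to U$ for every nonempty open $U\subset\C$---with the bulk of the effort going into non-subellipticity.

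For $\POPAI$, I would apply the localization principle (Corollary \ref{corollary:localization}) to the open cover of $\C^5\setminus\Sigma$ consisting of $V_0=\{z_3\neq0\}$ together with, for each point $q=(c_1,c_2,0,c_4,c_5)\in\{z_3=0\}\setminus\Sigma$, a polydisk neighborhood $V_q\subset\C^5\setminus\Sigma$ of $q$. Such $V_q$ exists since either $(c_1,c_2)\neq(a_j,j)$ for every $j$, in which case a small polydisk in $(z_1,z_2)$ separates $q$ from the discrete set $\{(a_j,j)\}_j\subset\C^2$, or $(c_1,c_2)=(a_{j_0},j_0)$ for some $j_0$ and then $q\notin\Sigma$ forces $(c_4,c_5)\notin(\overline{\N^{-1}})^2$, so one shrinks in $(z_4,z_5)$ instead. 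The restriction $\pi|_{V_0}$ is the trivial bundle $\C\times(\C^3\times\C^*)\to\C$ with Oka fiber, and each $\pi|_{V_q}$ is a trivial bundle over a disk with polydisk fiber; each such restriction is an Oka map, and the conclusion follows.

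For non-triviality, fix $U$ and pick $a_j\in U$ by density. The fiber over $a_j$ is $\C^4\setminus F_j$ with $F_j=\{(j,0)\}\times(\overline{\N^{-1}})^2$ a nonempty compact totally disconnected subset, so Alexander duality yields $\widetilde H^7(\C^4\setminus F_j;\Z)\cong\widetilde H_0(F_j;\Z)\neq0$; this fiber is therefore not homotopy equivalent to the contractible fiber $\C^4$ over any $c\in U\setminus\{a_j\}_j$, ruling out local triviality at $a_j$.

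The main obstacle is non-subellipticity. Suppose for contradiction that $\pi^{-1}(U)\to U$ admits a finite collection of collectively dominating vertical holomorphic sprays; restricting them to the fiber $\C^4\setminus F_j$ and iteratively composing (after trivializing the vector bundles by adding complements on the Stein fiber) yields a single dominating spray $s\colon(\C^4\setminus F_j)\times\C^m\to\C^4\setminus F_j$ with $s(y,0)=y$, $m\geq4$, and $d_ws(y,0)$ surjective for every $y$, exhibiting the fiber as elliptic in Gromov's sense. Since $F_j$ is closed in $\C^4$ with Hausdorff dimension $0$, Hartogs' extension theorem extends $s$ holomorphically to $\tilde s\colon\C^4\times\C^m\to\C^4$ with $\tilde s(y,0)=y$. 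The rank-drop locus $Z=\{y\in\C^4:\operatorname{rank}d_w\tilde s(y,0)<4\}$ is an analytic subvariety contained in $F_j$ (by ellipticity on $\C^4\setminus F_j$), hence $0$-dimensional because $F_j$ is countable while every positive-dimensional analytic subvariety of $\C^4$ is uncountable; thus $Z$ is a discrete closed set and misses all but finitely many of the isolated points $(j,0,1/m,1/n)\in F_j$ accumulating at $y^*=(j,0,0,0)$. Pick such $p\in F_j\setminus Z$; at $(p,0)$ the total derivative of $\tilde s$ is surjective onto $T_p\C^4$, and full rank of $d_w\tilde s(p,0)$ forces the kernel to project surjectively onto $T_p\C^4$, so $\tilde s^{-1}(p)\to\C^4$ is a submersion at $(p,0)$ whose image contains a neighborhood of $p$. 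Since $\C^4\setminus F_j$ is open dense, there is a preimage point $(y,w)$ with $y\in\C^4\setminus F_j$; then $s(y,w)=\tilde s(y,w)=p\in F_j$, contradicting that $s$ avoids $F_j$. The delicate step here is the dimension argument for $Z$, which crucially exploits the fact that $F_j$ is countable and not discrete, so it cannot coincide with any analytic subvariety of positive dimension but leaves ``room'' between the finitely many points of $Z$ and the accumulating isolated $F_j$-points to choose~$p$.
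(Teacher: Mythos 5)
Your argument for non-triviality (Alexander duality forces $H_7(\C^4\setminus F_j)\neq 0$, so the fiber over $a_j$ is not homotopy equivalent to the generic fiber $\C^4$) is correct and is a fine substitute for the paper's observation. The other two parts have genuine gaps. The fatal one is in the $\POPAI$ argument: Corollary \ref{corollary:localization} requires \emph{Zariski} open neighborhoods, i.e.\ complements of closed complex subvarieties, and your sets $V_q$ are polydisks, which are not Zariski open in $\C^5\setminus\Sigma$. With arbitrary open covers the localization principle is simply false (every complex manifold is locally a polydisk, so every submersion would enjoy $\POPAI$). This is precisely the difficulty the example is built around: near a point $(a_{j_0},j_0,0,c_4,c_5)$ with $(c_4,c_5)\notin(\overline{\N^{-1}})^2$ one must find a \emph{Zariski} open neighborhood avoiding the compact, non-discrete set $\{(a_{j_0},j_0,0)\}\times(\overline{\N^{-1}})^2$, and no complement of a subvariety of $\C^5\setminus\Sigma$ does this in an obvious way. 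The paper instead exhausts $\C^5\setminus\Sigma$ via Lemma \ref{lemma:increasing_union}, so that only finitely many bad fibers remain over a given compact set, views the resulting restriction as a stratified fiber bundle with Oka fibers, and then proves the finitely many bad fibers are Oka by a genuinely Zariski-local analysis (automorphisms of $\C^2$, covering maps, and the results of \cite{Kusakabe2020}). Some replacement for this work is unavoidable.

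The non-subellipticity argument also has a gap, at its very first step. Subellipticity of $\pi^{-1}(U)\to U$ restricted to the fiber gives a \emph{finite family} of sprays defined on possibly nontrivial holomorphic vector bundles $E_i\to\C^4\setminus F_j$ which dominate only \emph{collectively}; your reduction to a single dominating spray on the trivial bundle $(\C^4\setminus F_j)\times\C^m$ is not justified. The fiber is not Stein -- by Hartogs every holomorphic function on $\C^4\setminus F_j$ extends to $\C^4$ -- so ``adding a complementary bundle'' is unavailable, and the composed spray of a family lives on an iterated pullback which is a fiber bundle over $E_1$, not a vector bundle over the base; in general subelliptic does not reduce to a single global spray on a trivial bundle. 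Your Hartogs endgame (extend $\tilde s$, note the degeneracy locus $Z\subset F_j$ is finite since positive-dimensional analytic sets are uncountable, pick $p\in F_j\setminus Z$, and produce a point of $\C^4\setminus F_j$ mapped into $F_j$) is correct as far as it goes, but it only excludes this special kind of ellipticity, not subellipticity. The paper avoids the issue entirely by citing \cite[Corollary 3.2]{Kusakabe2020} for the fiber $\C^4\setminus(\{0\}^2\times(\overline{\N^{-1}})^2)$; if you want a self-contained proof you must handle sprays on arbitrary vector bundles.
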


The following lemma is an immediate consequence of Theorem \ref{theorem:characterization} (in fact we can prove it without using Theorem \ref{theorem:characterization}).

\begin{lemma}
\label{lemma:increasing_union}
Let $\pi:Y\to S$ be a holomorphic submersion.
Assume that for any compact set $K\subset Y$ there exists an open neighborhood $U\subset Y$ of $K$ such that the restriction $\pi|_U:U\to S$ enjoys $\POPAI$.
Then $\pi:Y\to S$ enjoys $\POPAI$.
\end{lemma}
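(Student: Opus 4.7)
The plan is to reduce the statement to a verification of convex ellipticity via Theorem \ref{theorem:characterization} and then exploit the image-independent spray construction provided by Lemma \ref{lemma:parametric_spray}. Specifically, I claim that the trivial open cover $\{S\}$ of $S$ witnesses convex ellipticity of $\pi$: I will show that for every compact convex set $K \subset \C^n$ and every holomorphic map $f\colon \Op K \to Y$, there exists a dominating global $\pi$-spray $\Op K \times \C^N \to Y$ over $f$ (with no further restriction on the image of $f$, since $\pi^{-1}(S)=Y$).

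Given such an $f$, the set $f(K) \subset Y$ is compact, so by hypothesis I can choose an open neighborhood $U \subset Y$ of $f(K)$ such that $\pi|_U \colon U \to S$ enjoys $\POPAI$. By continuity of $f$, I may shrink $\Op K$ so that $f(\Op K) \subset U$. Applying Theorem \ref{theorem:characterization} to $\pi|_U$ shows that this submersion is convexly elliptic, hence trivially stratified convexly elliptic (via the one-stratum decomposition of $S$).

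Next I invoke Lemma \ref{lemma:parametric_spray} for the stratified convexly elliptic submersion $\pi|_U$, with singleton parameter space $P=\{p\}$ and the holomorphic map $f \colon \Op K \to U$ viewed as a family. This produces a dominating global $(\pi|_U)$-spray $\Op K \times \C^N \to U$ over $f$; composing with the inclusion $U \hookrightarrow Y$ gives a dominating global $\pi$-spray over $f$. This verifies convex ellipticity of $\pi$ with the trivial cover $\{S\}$, and a final application of Theorem \ref{theorem:characterization} yields $\POPAI$ for $\pi$.

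The main conceptual obstacle is that convex ellipticity of $\pi|_U$ \emph{per se} only asserts the existence of dominating global sprays over maps whose image is contained in $\pi^{-1}(V_\alpha)\cap U$ for a single member $V_\alpha$ of the cover of $S$ witnessing convex ellipticity of $\pi|_U$; in general the image $f(K)$ need not lie in any such single piece. The role of Lemma \ref{lemma:parametric_spray} is precisely to remove this restriction, by combining the covering data with the Oka principle for sections (Theorem \ref{theorem:oka_principle_for_sections}) to produce dominating global sprays over arbitrary holomorphic maps from Stein compacta. Once this tool is in hand, the proof reduces to the two-line localization argument above.
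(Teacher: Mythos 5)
Your proof is correct and follows exactly the route the paper intends: the paper states the lemma as ``an immediate consequence of Theorem \ref{theorem:characterization}'' without giving details, and your argument---verify convex ellipticity of $\pi$ with the trivial cover $\{S\}$ by shrinking $\Op K$ into a suitable $U\supset f(K)$, converting $\POPAI$ of $\pi|_U$ into (stratified) convex ellipticity, and then invoking Lemma \ref{lemma:parametric_spray} to obtain a dominating global spray over $f$ unconstrained by the witnessing cover of $S$---is a faithful and complete implementation of that reduction. Your identification of the cover restriction in Definition \ref{definition:cell} as the only nontrivial point, and the use of Lemma \ref{lemma:parametric_spray} to remove it, is exactly the right way to fill in the details the paper leaves implicit.
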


\begin{proof}[Proof of Proposition \ref{proposition:example}]
Note that for each $j\in\N$ the fiber $\pi^{-1}(a_j)\cong\C^4\setminus(\{0\}^2\times(\overline{\N^{-1}})^2)$ is not subelliptic by \cite[Corollary 3.2]{Kusakabe2020}.
On the other hand, $\pi^{-1}(z)\cong\C^4$ for each $z\not\in\{a_j\}_{j\in\N}$.
These imply the latter statement.

Let us prove that the submersion $\pi:\C^{5}\setminus\Sigma\to\C$ enjoys $\POPAI$ by using Lemma \ref{lemma:increasing_union}.
Take an arbitrary compact set $K\subset\C^{5}\setminus\Sigma$.
Then there exists $k\in\N$ such that
\begin{align*}
U=\C^{5}\setminus\left(\left(\{(a_j,j,0)\}_{j=1}^{k}\times\left(\overline{\N^{-1}}\right)^2\right)\cup\left(\C\times\{(j,0)\}_{j=k+1}^{\infty}\times\C^{2}\right)\right)
\end{align*}
contains $K$.
Consider the restriction $\pi|_{U}:U\to\C$.
Note that
\begin{align*}
\pi|_{U}^{-1}(a_{l})=\C^{4}\setminus\left(\left(\{(l,0)\}\times\left(\overline{\N^{-1}}\right)^2\right)\cup\left(\{(j,0)\}_{j=k+1}^{\infty}\times\C^{2}\right)\right)
\end{align*}
for each $l=1,\ldots,k$.
On the other hand, for each $z\not\in\{a_j\}_{j=1}^{k}$ the fiber $\pi_{U}^{-1}(z)=(\C^2\setminus\{(j,0)\}_{j=k+1}^{\infty})\times\C^{2}$ is Oka (cf. \cite[Proposition 5.6.17]{Forstneric2017}).
Since a stratified holomorphic fiber bundle with Oka fibers enjoys $\POPAI$ (the Forstneri\v{c} type Oka principle), it suffices to prove that for each $l=1,\ldots,k$ the fiber $\pi|_{U}^{-1}(a_{l})$ is Oka.
Consider the projection $\pi_{1,2}:\pi|_{U}^{-1}(a_{l})\to\C^{2}\setminus\{(j,0)\}_{j=k+1}^{\infty},\ (z_{1},z_{2},z_{3},z_{4})\mapsto (z_{1},z_{2})$.
All fibers coincide with $\C^{2}$ except $\pi_{1,2}^{-1}((l,0))=\C^{2}\setminus(\overline{\N^{-1}})^2$.
By using an automorphism $\varphi\in\Aut\C^{2}$ which fixes $\{(j,0)\}_{j=k+1}^{\infty}$ and satisfies $\varphi(l,0)=(l,1)$, it can be seen that each point of $\pi|_{U}^{-1}(a_{l})$ has a Zariski open\footnote{A subset of a complex space is said to be \emph{Zariski open} if its complement is a closed complex subvariety.} neighborhood of the form
\begin{align*}
\varphi^{-1}\left(\left(\C\times\C^{*}\times\C^{2}\right)\setminus\left(\{(l,1)\}\times\left(\overline{\N^{-1}}\right)^2\right)\right).
\end{align*}
By the localization principle for Oka manifolds \cite[Theorem 1.4]{Kusakabe2019}, it suffices to prove such a Zariski open set is Oka.
Since
\begin{align*}
\C^{4}\setminus\left(\{l\}\times\Z\times\left(\overline{\N^{-1}}\right)^2\right)&\to\left(\C\times\C^{*}\times\C^{2}\right)\setminus\left(\{(l,1)\}\times\left(\overline{\N^{-1}}\right)^2\right), \\
(z_{1},z_{2},z_{3},z_{4})&\mapsto(z_{1},\exp(2\pi iz_{2}),z_{3},z_{4})
\end{align*}
is a covering map, it suffices to prove that $\C^{4}\setminus(\{l\}\times\Z\times(\overline{\N^{-1}})^2)$ is Oka (see Corollary \ref{corollary:fibration}).
Consider the Zariski open covering of $\C^{4}\setminus(\{l\}\times\Z\times\{(0,0)\})=(\{l\}\times\Z\times\{(0,0)\})^{c}$ by $U_{1}=(\C^{2}\times\{0\}\times\C)^{c}$, $U_{2}=(\C^{3}\times\{0\})^{c}$ and $U_{3}=(\{l\}\times\Z\times\C^{2})^{c}$.
Note that the following is a Zariski open covering
\begin{align*}
\C^{4}\setminus\left(\{l\}\times\Z\times\left(\overline{\N^{-1}}\right)^2\right)=\bigcup_{j=1}^{3}\left(U_{j}\setminus\left(\{l\}\times\Z\times\left(\overline{\N^{-1}}\right)^2\right)\right).
\end{align*}
Since $U_{3}\cap(\{l\}\times\Z\times(\overline{\N^{-1}})^2)=\emptyset$, the Zariski open set $U_{3}\setminus(\{l\}\times\Z\times(\overline{\N^{-1}})^2)=U_{3}=(\C^{2}\setminus(\{l\}\times\Z))\times\C^{2}$ is Oka (cf. \cite[Proposition 5.6.17]{Forstneric2017}).
On the other hand, we can see that $U_{1}\setminus(\{l\}\times\Z\times(\overline{\N^{-1}})^2)\cong U_{2}\setminus(\{l\}\times\Z\times(\overline{\N^{-1}})^2)$ are Oka because they are universally covered by $\C^{4}\setminus(\{l\}\times\Z\times\exp^{-1}(\N^{-1})\times\overline{\N^{-1}})\cong\C^{4}\setminus(\{l\}\times\Z\times\overline{\N^{-1}}\times\exp^{-1}(\N^{-1}))$ which are Oka by \cite[Theorem 1.2]{Kusakabe2020}.
By the localization principle \cite[Theorem 1.4]{Kusakabe2019} again, $\C^{4}\setminus(\{l\}\times\Z\times(\overline{\N^{-1}})^2)$ is Oka.
\end{proof}

\begin{remark}
\label{remark:Stein}
For a holomorphic submersion $\pi:Y\to S$ from a Stein space $Y$, it can be easily seen that ellipticity, subellipticity and $\POPAI$ are equivalent.
Indeed, if $\pi$ enjoys $\POPAI$ then there exists a dominating global $\pi$-spray $Y\times\C^{N}\to Y$ over the identity map $\id_{Y}$ (see \cite[Proof of Corollary 8.8.7]{Forstneric2017}), hence $\pi$ is elliptic.
\end{remark}

%
%

\subsection{Localization principle for Oka maps}

In \cite{Kusakabe2019}, we proved the localization principle for Oka manifolds as an application of the characterization of Oka manifolds by convex ellipticity.
The same proof shows the following localization principle.

\begin{corollary}
\label{corollary:localization}
Let $\pi:Y\to S$ be a holomorphic submersion.
Assume that for any point $p\in Y$ there exists a Zariski open neighborhood $U\subset Y$ of $p$ such that the restriction $\pi|_{U}:U\to S$ enjoys $\POPAI$.
Then $\pi$ enjoys $\POPAI$.
\end{corollary}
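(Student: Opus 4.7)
The plan is to reduce, via Theorem \ref{theorem:characterization}, to showing that $\pi$ is convexly elliptic, and then to verify convex ellipticity by constructing a dominating global $\pi$-spray over every holomorphic map $f\colon\Op K\to Y$ defined on a neighborhood of a compact convex set $K\subset\C^{n}$ (so that the open cover of $S$ in Definition \ref{definition:cell} may be taken to be the trivial cover $\{S\}$). The construction follows the pattern of \cite[Theorem~1.4]{Kusakabe2019}, adapted to the relative setting.

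First, since $f(K)\subset Y$ is compact, I would cover it by finitely many Zariski open neighborhoods $V_{1},\ldots,V_{m}$ supplied by the hypothesis; by Theorem \ref{theorem:characterization} each restriction $\pi|_{V_{j}}$ is convexly elliptic. Next, by compactness and uniform continuity, I would refine $K$ into compact convex pieces $K_{1},\ldots,K_{r}$, arranged so that consecutive unions form very special Cartan pairs in $\C^{n}$ (cf. Lemma \ref{lemma:very_special}), with $f(K_{i})\subset V_{j(i)}\cap \pi^{-1}(U^{j(i)}_{\beta(i)})$ for some element $U^{j(i)}_{\beta(i)}$ of the open cover of $S$ witnessing convex ellipticity of $\pi|_{V_{j(i)}}$. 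Convex ellipticity of $\pi|_{V_{j(i)}}$ then yields dominating $\pi$-sprays $s_{i}\colon\Op K_{i}\times\C^{N_{i}}\to V_{j(i)}\subset Y$ over $f|_{\Op K_{i}}$, and after appending trivial factors we may assume a common rank $N$.

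The core step is then to glue the local sprays $s_{i}$ into a single dominating global $\pi$-spray $\Op K\times\C^{N'}\to Y$ over $f$. I would carry this out by an iterated application of Forstneri\v{c}'s Heftungslemma along the Cartan-pair sequence, lifting each homotopy of sprays to be $\C^{N}$-valued along the previously constructed spray in precisely the manner of the proof of Theorem \ref{theorem:approximation}; at each inductive stage the current spray and the new local spray both take values in the same $V_{j(i)}$ on the overlap, so that the amalgamation may be carried out inside that Zariski open set.

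The main obstacle is this gluing step, specifically ensuring that the Oka--Weil approximations used in the Heftungslemma keep the amalgamated spray inside the relevant Zariski open set $V_{j(i)}$ at each stage. This is controlled by the Zariski structure of the hypothesis: each complement $Y\setminus V_{j}$ is a closed complex subvariety, so $f^{-1}(Y\setminus V_{j(i)})$ is a closed complex subvariety of $\Op K$ disjoint from $K_{i}$, leaving ample room for shrinking spray domains and for applying the parametric Cartan--Oka--Weil theorem. With this Zariski-geometric input, the bookkeeping of \cite[Section~4]{Kusakabe2019} transcribes to the present relative setting with $Y$ and its Oka subsets replaced by the submersion $\pi$ and its convexly elliptic restrictions $\pi|_{V_{j}}$.
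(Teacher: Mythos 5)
Your opening moves match the intended argument (the paper simply transplants the proof of the localization principle from \cite{Kusakabe2019}): reduce to convex ellipticity via Theorem \ref{theorem:characterization}, cover $f(K)$ by finitely many Zariski open sets $V_{j}$ on which $\pi$ restricts to a convexly elliptic submersion, and exploit convexity of small pieces. The gap is in your core gluing step. Forstneri\v{c}'s Heftungslemma amalgamates two sprays that are uniformly close on $\Op(A\cap B)\times W$, and --- as in the paper's own use of it in the proof of Theorem \ref{theorem:approximation} --- its output lives on a \emph{shrunken bounded} fiber domain $rW$; the closeness hypothesis is arranged by Oka--Weil approximation on a compact set and cannot hold on an unbounded fiber domain. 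So iterating the Heftungslemma along your Cartan-pair sequence yields only a dominating \emph{local} $\pi$-spray over $f$. Such sprays exist unconditionally by Lemma \ref{lemma:local_spray}, so the construction produces nothing: the entire content of Definition \ref{definition:cell} is the globality of the spray in the fiber direction, and your proposal contains no mechanism for achieving it. Relatedly, your claim that at each stage ``the current spray and the new local spray both take values in the same $V_{j(i)}$ on the overlap'' is unjustified for a global spray: only the core $s(\cdot,0)=f$ is controlled, while $s(z,w)$ for large $w$ may leave every $V_{j}$.

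Globality is exactly where the Zariski hypothesis does its real work, and it enters quite differently from the ``room for shrinking'' you describe. Because each $Y\setminus V_{j}$ is a closed complex subvariety, $A_{j}:=f^{-1}(Y\setminus V_{j})$ is a closed complex subvariety of $\Op K$ with $\bigcap_{j}A_{j}\cap K=\emptyset$. For each $j$ one constructs, via the Oka principle for sections (Theorem \ref{theorem:oka_principle_for_sections}) applied over a Stein neighborhood of the graph $\Gamma_{f}$ exactly as in Lemma \ref{lemma:parametric_spray} (the neighborhood of the graph is needed so that the resulting sprays can later be composed), a dominating global $\pi$-spray with values in $V_{j}$ over the part of the graph lying over $\Op K\setminus A_{j}$; multiplying the fiber variable by holomorphic functions vanishing on $A_{j}$ and extending across the subvariety $A_{j}\times\C^{N_{j}}$ by a Riemann-type extension produces a global $\pi$-spray $s_{j}:\Op K\times\C^{N_{j}}\to Y$ over $f$ that is dominating off $A_{j}$. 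Composing $s_{1},\ldots,s_{m}$ then gives a single global $\pi$-spray over $f$ dominating on all of $K$, which is what convex ellipticity demands. None of this is available when the $V_{j}$ are merely open (which is why the open-local version of the problem remains open), so a correct write-up must make the subvarieties $f^{-1}(Y\setminus V_{j})$ carry the argument rather than serve as a source of elbow room.
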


By using this localization principle, we give new examples of Oka maps.
In \cite{Hanysz2014}, Hanysz studied the Oka property of the graph complement $(X\times\P^{1})\setminus(\Gamma_{f}\cup\Gamma_{g})$ for a holomorphic map $f:X\to\P^{1}$ and the constant map $g\equiv\infty$.
Under a certain assumption on $f$, he proved that the graph complement is Oka if and only if $X$ is Oka \cite[Theorem 4.6]{Hanysz2014} and that the projection $(X\times\P^{1})\setminus(\Gamma_{f}\cup\Gamma_{g})\to X$ is elliptic and hence enjoys $\POPAI$ \cite[Remark 4.12]{Hanysz2014}.
The latter statement holds even if we consider any nonconstant holomorphic maps $f,g:X\to\P^{1}$ since the projection is stratified elliptic.
We have the following generalization of this statement.
It also generalizes our previous result \cite[Corollary 1.5]{Kusakabe2019} (see also Corollary \ref{corollary:configuration}).

\begin{corollary}
\label{corollary:graph_complement}
Let $Y$ be a complex manifold of dimension at least two.
Assume that $Y$ is Zariski locally biholomorphic to some abelian complex Lie group.
Then for any complex space $X$ and any holomorphic maps $f_{1},\ldots,f_{k}:X\to Y$ the projection $\pr_{X}:(X\times Y)\setminus\bigcup_{j=1}^{k}\Gamma_{f_{j}}\to X$ from the graph complement enjoys $\POPAI$.
This holds in particular for any smooth toric variety $Y$ of dimension at least two.
\end{corollary}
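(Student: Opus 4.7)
The plan is to apply the localization principle for Oka maps (Corollary~\ref{corollary:localization}) and produce, around each point of the graph complement, an explicit dominating $\pr_X$-spray built from shears in the ambient abelian Lie group, in the spirit of \cite[Corollary 1.5]{Kusakabe2019}. Set $W := (X \times Y) \setminus \bigcup_{j=1}^k \Gamma_{f_j}$. By Corollary~\ref{corollary:localization} it suffices to exhibit, for each $p = (x_0, y_0) \in W$, a Zariski open neighborhood $U' \subset W$ of $p$ on which $\pr_X$ enjoys $\POPAI$. By hypothesis, choose a Zariski open $V \ni y_0$ in $Y$ and a biholomorphism $\Phi : V \to G$ onto an abelian complex Lie group $G$ of dimension $n \geq 2$. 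Since $(X \times V) \setminus \bigcup_j \Gamma_{f_j}$ is Zariski open in $W$, pushing forward by $\id \times \Phi$ reduces the problem to constructing the spray in the model $(X \times G) \setminus \bigcup_j \Gamma_{\tilde f_j}$, where $\tilde f_j = \Phi \circ f_j$ is defined on the Zariski open $f_j^{-1}(V) \subset X$.

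To build the spray I would fix a basis $v_1, \ldots, v_n$ of the Lie algebra $\mathfrak{g}$ and, for each $i$, a nonconstant holomorphic map $\lambda_i : G \to \C$ invariant under the one-parameter subgroup $\exp(\C v_i)$. For $G = \C^p \times (\C^*)^q$, precisely the case needed for the affine toric charts of a smooth toric variety, such $\lambda_i$ exist in abundance (linear forms on the $\C^p$ factor vanishing on $v_i$, and coordinate functions or generic combinations thereof on $(\C^*)^q$). Make a generic choice so that $\lambda_i(\Phi(y_0)) \neq \lambda_i(\tilde f_j(x_0))$ for every $i, j$, and define
\begin{align*}
\theta_i(x, g, t) = \bigl(x,\; g \cdot \exp\bigl(t\, P_i(x, \lambda_i(g))\, v_i\bigr)\bigr), \qquad P_i(x, w) = \prod_j \bigl(w - \lambda_i(\tilde f_j(x))\bigr),
\end{align*}
with the group operation interpreted additively on $\C^p$ factors and multiplicatively on $(\C^*)^q$ factors.

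A direct computation shows each $\theta_i$ preserves the graph complement: if $\theta_i(x, g, t) = \tilde f_j(x)$, applying $\lambda_i$ yields $\lambda_i(g) = \lambda_i(\tilde f_j(x))$, which forces $P_i(x, \lambda_i(g)) = 0$ and hence $\theta_i(x, g, t) = (x, g)$, so $g = \tilde f_j(x)$, contradicting $(x, g) \notin \Gamma_{\tilde f_j}$. Since $G$ is abelian the $\theta_i$ commute, so $\Theta := \theta_n \circ \cdots \circ \theta_1$ is a $\pr_X$-preserving holomorphic map on the graph complement with parameter $t \in \C^n$, and $\partial_{t_i}\Theta|_{t = 0}$ is a nonzero multiple of $v_i$ precisely on the Zariski open $U' := \{(x, g) : P_i(x, \lambda_i(g)) \neq 0\ \forall i\}$ (which contains $p$). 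Thus $\Theta$ is a dominating global $\pr_X$-spray on $U'$, and by Theorem~\ref{theorem:characterization} $\pr_X|_{U'}$ enjoys $\POPAI$. Varying $p$ and the auxiliary data $(\lambda_i, v_i)$ covers $W$ by such Zariski opens, and a second appeal to Corollary~\ref{corollary:localization} finishes the proof. The principal obstacle will be guaranteeing enough holomorphic invariants $\lambda_i$ with the required separation property: this is immediate in the Stein setting $G = \C^p \times (\C^*)^q$ that governs smooth toric varieties (handled via affine toric charts), but demands additional care if $G$ possesses a compact complex factor.
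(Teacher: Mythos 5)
There is a genuine gap, and it is the one you flag yourself at the end but do not resolve: your construction requires nonconstant holomorphic functions $\lambda_i:G\to\C$ invariant under the one-parameter subgroups $\exp(\C v_i)$, and these exist only when $G$ is Stein, i.e.\ $G\cong\C^{p}\times(\C^{*})^{q}$. A general connected abelian complex Lie group is $\C^{n}/\Lambda$ for a discrete subgroup $\Lambda$, and when $\Lambda$ has rank larger than $n$ (a complex torus, or more generally a Cousin/toroidal group) $G$ carries few or no nonconstant holomorphic functions, so the shears $\theta_i$ cannot be built and the argument collapses precisely in the case that distinguishes the corollary from its toric special case. The paper's proof avoids this entirely: after the same localization and a stratification of $X$ by the number of $f_j(x)$ landing in $U\cong\C^{n}/\Lambda$, it lifts along the universal covering $\pi:\C^{n}\to\C^{n}/\Lambda$ and uses Corollary \ref{corollary:fibration} to transfer $\POPAI$ across the covering; the preimages $(\id_X\times\pi)^{-1}(\Gamma_{f_j})$ are then shown to be locally uniformly tame in $X\times\C^{n}$ via Buzzard's theorems, so the complement projection is an elliptic submersion by \cite[Proposition 6.4.14]{Forstneric2017}. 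That passage to the universal cover plus Buzzard's tameness is the missing idea.

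Even in the Stein case $G=\C^{p}\times(\C^{*})^{q}$ your argument has a secondary defect: the composed map $\Theta$ is dominating only on $U'=\{P_i(x,\lambda_i(g))\neq 0\ \forall i\}$, but $\theta_{i'}$ preserves $\{P_{i'}\neq 0\}$ only for its own index (only $\lambda_{i'}$ is invariant under $\exp(\C v_{i'})$), so $\Theta$ need not map $U'$ into $U'$. Hence $\Theta$ is not a $\pr_X$-spray for the restriction $\pr_X|_{U'}$, and convex ellipticity of $\pr_X|_{U'}$ --- which is what Corollary \ref{corollary:localization} asks you to verify --- is not actually established. The standard repair is to take sufficiently many shear fields (varying the invariant functions and directions) vanishing on the graphs so that their values span the vertical tangent space at \emph{every} point of the model complement, making that complement elliptic outright without shrinking to $U'$; alternatively, note that your $P_i$ is only defined where all $f_j(x)$ lie in $V$, which also needs the stratification of $X$ that the paper performs explicitly.
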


\begin{proof}
By the localization principle (Corollary \ref{corollary:localization}), it suffices to prove that the restriction $\pr_{X}:(X\times U)\setminus\bigcup_{j=1}^{k}\Gamma_{f_{j}}\to X$ enjoys $\POPAI$ where $U\subset Y$ is a Zariski open subset which is biholomorphic to some abelian complex Lie group.
After shrinking $U$ if necessary, we may assume that this abelian complex Lie group is connected and hence there exists a lattice $\Lambda\subset\C^{n}$ (i.e. a discrete subgroup) such that $U\cong\C^{n}/\Lambda$. 
Let us stratify $X$ by closed complex subvarieties
\begin{align*}
X_{l}=\left\{x\in X:\#\left(\{f_{j}(x)\}_{j=1}^{k}\cap U\right)\leq l\right\},\quad l=-1,0,\ldots,k.
\end{align*}
By considering each stratum of $X$, we may assume that $f_{1},\ldots,f_{k}\in\cO(X,U)$ and $\Gamma_{f_{j}}\cap\Gamma_{f_{l}}=\emptyset$ if $j\neq l$ from the beginning.
In the following, we identify $U$ with $\C^{n}/\Lambda$.
By using the universal covering map $\pi:\C^{n}\to\C^{n}/\Lambda$ and Corollary \ref{corollary:fibration}, we can see that $\pr_{X}:(X\times(\C^{n}/\Lambda))\setminus\bigcup_{j=1}^{k}\Gamma_{f_{j}}\to X$ enjoys $\POPAI$ if and only if $\pr_{X}:(X\times\C^{n})\setminus\bigcup_{j=1}^{k}(\id_{X}\times\pi)^{-1}(\Gamma_{f_{j}})\to X$ enjoys $\POPAI$.
Take an arbitrary point $p\in X$.
By the results of Buzzard \cite[Proofs of Theorem 4.2 and Theorem 4.3]{Buzzard2003}, there exist mutually disjoint open neighborhoods $U_{j}\subset\C^{n}/\Lambda\ (j=1,\ldots,k)$ of $f_{j}(p)$ and an automorphism $\varphi\in\Aut\C^{n}$ such that
\begin{align*}
\varphi\left(\bigcup_{j=1}^{k}\pi^{-1}(U_{j})\right)\subset\{(z',z_{n})\in\C^{n}:|z_{n}|+1\leq\|z'\|\}.
\end{align*}
If we set $U=\bigcap_{j=1}^{k}f_{j}^{-1}(U_{j})\subset X$, it is an open neighborhood of $p$ which satisfies
\begin{align*}
(\id_{U}\times\varphi)\left(\bigcup_{j=1}^{k}(\id_{U}\times\pi)^{-1}(\Gamma_{f_{j}|_{U}})\right)\subset U\times\{(z',z_{n})\in\C^{n}:|z_{n}|+1\leq\|z'\|\}.
\end{align*}
Thus $\bigcup_{j=1}^{k}(\id_{X}\times\pi)^{-1}(\Gamma_{f_{j}})\subset X\times\C^{n}$ is locally uniformly tame (see \cite[Proposition 6.4.14]{Forstneric2017} for the definition of locally uniformly tame subvarieties).
Therefore $\pr_{X}:(X\times\C^{n})\setminus\bigcup_{j=1}^{k}(\id_{X}\times\pi)^{-1}(\Gamma_{f_{j}})\to X$ is an elliptic submersion (cf. \cite[Proposition 6.4.14]{Forstneric2017}) and hence enjoys $\POPAI$.
\end{proof}

Corollary \ref{corollary:graph_complement} particularly implies the following Oka principle.

\begin{corollary}
Let $X$ be a Stein space, and let $Y$ and $f_{1},\ldots,f_{k}$ be as in Corollary \ref{corollary:graph_complement}.
If there exists a continuous map $f:X\to Y$ such that $f(x)\neq f_{j}(x)$ for all $j=1,\ldots,k$ and $x\in X$, then there exists a holomorphic map $\tilde f:X\to Y$ such that $\tilde f(x)\neq f_{j}(x)$ for all $j=1,\ldots,k$ and $x\in X$.
\end{corollary}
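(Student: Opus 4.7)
The plan is to reformulate the problem as a section-lifting problem for the projection from the graph complement, and then invoke the $\POPAI$ established in Corollary \ref{corollary:graph_complement}.

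First I would observe that a map $g:X\to Y$ satisfies $g(x)\neq f_{j}(x)$ for all $j=1,\ldots,k$ and all $x\in X$ if and only if the associated map $x\mapsto(x,g(x))$ takes values in $(X\times Y)\setminus\bigcup_{j=1}^{k}\Gamma_{f_{j}}$ and is a section of the projection $\pr_{X}:(X\times Y)\setminus\bigcup_{j=1}^{k}\Gamma_{f_{j}}\to X$. In particular, the hypothesized continuous map $f:X\to Y$ yields a continuous section $\sigma_{0}:X\to(X\times Y)\setminus\bigcup_{j=1}^{k}\Gamma_{f_{j}}$ defined by $\sigma_{0}(x)=(x,f(x))$.

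Next I would apply Corollary \ref{corollary:graph_complement}, which ensures that $\pr_{X}$ enjoys $\POPAI$. Setting in Definition \ref{definition:oka_map} the parameter spaces $P=\{*\}$ and $P_{0}=\emptyset$, the subvariety $X'=\emptyset$, the compact set $K=\emptyset$, and $F=\id_{X}$, the data $(F,\sigma_{0})$ satisfies the hypotheses of $\POPAI$ (continuity of $\sigma_{0}$ and $\pr_{X}\circ\sigma_{0}=\id_{X}=F$). The $\POPAI$ conclusion then provides a homotopy from $\sigma_{0}$ through $\pr_{X}$-sections to a holomorphic section $\sigma_{1}:X\to(X\times Y)\setminus\bigcup_{j=1}^{k}\Gamma_{f_{j}}$.

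Finally I would extract the desired map: writing $\sigma_{1}(x)=(x,\tilde f(x))$ defines a holomorphic map $\tilde f:X\to Y$, and the fact that $\sigma_{1}$ avoids $\bigcup_{j=1}^{k}\Gamma_{f_{j}}$ translates precisely to $\tilde f(x)\neq f_{j}(x)$ for every $j=1,\ldots,k$ and every $x\in X$. There is no real obstacle here beyond unpacking the definitions; the entire content lies in Corollary \ref{corollary:graph_complement}, which has already been established. The argument is simply the standard passage from an Oka property of a submersion to an Oka-type existence theorem for sections.
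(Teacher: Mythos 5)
Your proposal is correct and is exactly the argument the paper intends: the paper presents this corollary as an immediate consequence of Corollary \ref{corollary:graph_complement}, obtained by viewing $f$ as a continuous section of $\pr_{X}$ and applying $\POPAI$ with trivial parameter, interpolation, and approximation data. Nothing further is needed.
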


For a complex manifold $Y$, let us consider the \emph{configuration space} of ordered $n$-tuples of points in $Y$: $F(Y,n)=\{(y_{1},\ldots,y_{n})\in Y^{n}:y_{j}\neq y_{k}\ \mbox{if}\ j\neq k\}$.
Kutzschebauch and Ramos-Peon \cite[Theorem 3.1]{Kutzschebauch2017} proved that for a Stein manifold $Y$ of dimension at least two with the density property or the volume density property its configuration space $F(Y,n)$ is Oka for each $n\in\N$.
As an immediate consequence of Corollary \ref{corollary:graph_complement}, we obtain the following analogous result for abelian complex Lie groups and smooth toric varieties.

\begin{corollary}
\label{corollary:configuration}
Let $Y$ be a complex manifold of dimension at least two.
Assume that $Y$ is Zariski locally biholomorphic to some abelian complex Lie group.
Then for each $n\in\N$ the projection $F(Y,n+1)\to F(Y,n),\ (y_{1},\ldots,y_{n+1})\mapsto(y_{1},\ldots,y_{n})$ is an Oka map.
In particular, the following hold:
\begin{enumerate}
\item For any complex manifold $X$ and any holomorphic maps $f_{1},\ldots,f_{k}:X\to Y$ whose graphs are mutually disjoint, the graph complement $(X\times Y)\setminus\bigcup_{j=1}^{k}\Gamma_{f_{j}}$ is Oka if and only if $X$ is Oka.
\item For each $n\in\N$, the configuration space $F(Y,n)$ is Oka.
\item For any finite subset $A\subset Y$, the complement $Y\setminus A$ is Oka.
\end{enumerate}
\end{corollary}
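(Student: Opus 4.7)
The plan is to realize $F(Y, n+1)$ as a graph complement in $F(Y, n) \times Y$, invoke Corollary~\ref{corollary:graph_complement} to obtain $\POPAI$ of the forgetful projection, and then deduce the three consequences by straightforward applications of Corollary~\ref{corollary:fibration}.

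First I would identify $F(Y, n+1)$ biholomorphically with the graph complement $(F(Y, n) \times Y) \setminus \bigcup_{j=1}^{n} \Gamma_{\pr_{j}}$, where $\pr_{j}: F(Y, n) \to Y$ is the $j$-th coordinate projection; the graphs are pairwise disjoint by the very definition of the configuration space. Corollary~\ref{corollary:graph_complement} then yields $\POPAI$ of the forgetful projection $F(Y, n+1) \to F(Y, n)$. To promote this to an Oka map one still needs a topological fibration property, which is precisely the classical Fadell--Neuwirth theorem, applicable because $Y$ has real dimension at least four.

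The three consequences now follow by routine manipulations. For (1), I would apply Corollary~\ref{corollary:fibration} with $S$ a point and the surjective Oka map $\pr_{X}: (X \times Y) \setminus \bigcup_{j=1}^{k} \Gamma_{f_{j}} \to X$; its topological fibration property follows by the same Fadell--Neuwirth-style argument, producing a local continuous family of self-diffeomorphisms of $Y$ sending $\{f_{j}(x_{0})\}_{j}$ to $\{f_{j}(x)\}_{j}$ for $x$ near $x_{0}$. Assertion (2) proceeds by induction on $n$: the base case $n = 1$ amounts to $Y$ being Oka, which holds because every abelian complex Lie group $\C^{m}/\Lambda$ is Oka (the group law provides a dominating spray) and the localization principle \cite[Theorem 1.4]{Kusakabe2019} then propagates this to $Y$; the inductive step combines the first assertion of the corollary with Corollary~\ref{corollary:fibration} over $S = *$. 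For (3), the complement $Y \setminus A$ is a fiber of the Oka map $F(Y, |A|+1) \to F(Y, |A|)$, and fibers of Oka maps are Oka manifolds, which is immediate by specializing $\POPAI$ to a constant base map $F \equiv s_{0}$.

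The only genuinely nontrivial point I foresee is the topological-fibration property of the graph-complement projection; all the Oka-theoretic content is already packaged in Corollaries~\ref{corollary:graph_complement} and~\ref{corollary:fibration}. The Fadell--Neuwirth isotopy argument is standard and works uniformly in the number of strands, since the points $\{f_{j}(x)\}_{j}$ remain mutually distinct as $x$ varies in a small neighborhood of $x_{0}$ in $X$.
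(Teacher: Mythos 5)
Your proposal is correct and follows the paper's intended route: the paper derives this corollary as an immediate consequence of Corollary \ref{corollary:graph_complement} via exactly your identification $F(Y,n+1)\cong(F(Y,n)\times Y)\setminus\bigcup_{j=1}^{n}\Gamma_{\pr_{j}}$ (the graphs being disjoint by definition of the configuration space), with the Fadell--Neuwirth isotopy argument supplying the topological fibration property and Corollary \ref{corollary:fibration} yielding the three listed consequences. No gaps.
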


It is natural to ask the following problem.
An affirmative answer to it solves the problem whether a punctured Oka manifold of dimension at least two is Oka \cite[Problem 7.6.2]{Forstneric2017}.

\begin{problem}
Let $Y$ be an Oka manifold of dimension at least two.
Is the projection $F(Y,n+1)\to F(Y,n),\ (y_{1},\ldots,y_{n+1})\mapsto(y_{1},\ldots,y_{n})$ an Oka map for each $n\in\N$?
\end{problem}

%
%

\section*{Acknowledgement}
I would like to thank my supervisor Katsutoshi Yamanoi, Finnur L\'{a}russon and Luca Studer for encouragement and many useful comments.
I also wish to thank Franc Forstneri\v{c} for his question on Corollary \ref{corollary:dimensionwise} which led to this paper, and for helpful discussions and his hospitality during my visit at University of Ljubljana.
The idea of approximating $\tilde s_{A}$ in the proof of Theorem \ref{theorem:approximation} is due to him.
This work was supported by JSPS KAKENHI Grant Number JP18J20418.

%
%

\end{document}